\documentclass[11pt, leqno]{amsart}

\usepackage{amsmath}
\usepackage{amsfonts}
\usepackage{amssymb}
\usepackage{amsthm}
\usepackage{enumitem}
\usepackage{thm-restate}
\usepackage{mathtools}

\usepackage{hyperref}
\hypersetup{hidelinks}

\usepackage{geometry}
\geometry{a4paper, lmargin=1in, rmargin=1in, tmargin=1in, bmargin=1in}

\newtheorem{theorem}{Theorem}[section]
\newtheorem{lemma}{Lemma}[section]

\theoremstyle{remark}
\newtheorem*{remark}{Remark}

\theoremstyle{definition}
\newtheorem{definition}[lemma]{Definition}

\numberwithin{equation}{section}

\DeclareMathOperator{\supp}{supp}
\newcommand{\Mod}[1]{\ (\mathrm{mod}\ #1)}

\makeatletter
\@namedef{subjclassname@2020}{\textup{2020} Mathematics Subject Classification}
\makeatother

\title{The Second Moment of Sums of Hecke Eigenvalues II}
\author{Ned Carmichael}
\date{March 5, 2026}
\address{Department of Mathematics, King’s College London, London, WC2R 2LS, UK}
\email{ned.carmichael@kcl.ac.uk}

\begin{document}

\begin{abstract}
Let \(f\) be a holomorphic Hecke cusp form of weight \(k\) for \(\mathrm{SL}_2(\mathbb{Z})\), and let \((\lambda_f(n))_{n\geq 1}\) denote its sequence of normalised Hecke eigenvalues. 
We compute the first and second moments of the sums \(\mathcal S(x,f)=\sum_{x\leq n\leq 2x} \lambda_f(n)\), on average over forms \(f\) of large weight \(k\). 
In the range \(k^2/(8\pi^2)\leq x\leq k^{12/5-\epsilon}\), the size of the second moment lies between \(x^{1/2-o(1)}\) and \(x^{1/2}\). 
This is in sharp contrast to the regime \(x\leq k^{2-o(1)}\), where the second moment was shown in preceding work \cite{paper1} to be of size \(\asymp x\).
\end{abstract}

\subjclass[2020]{11F30 (Primary) 11N37, 11F11 (Secondary)}

\maketitle
\section{Introduction}

Let \(k\) be an even positive integer, and let \(\mathcal B_k\) denote an orthogonal basis (with respect to the Petersson inner product) of Hecke eigenforms for the space of weight \(k\) holomorphic cusp forms for \(\mathrm{SL}_2(\mathbb{Z})\). 
In this article, we normalise the Fourier expansion of \(f\in\mathcal B_k\) by 
\[f(z)=\sum_{n\geq1}\lambda_f(n) n^{(k-1)/2} e(nz)\;\;\;\; (z\in\mathbb{H}),\]
and \(\lambda_f(1)=1\).
We study the sums of Hecke eigenvalues
\[\mathcal S(x,f)\coloneqq \sum_{x\leq n\leq 2x} \lambda_f(n),\]
as \(f\) traverses \(\mathcal B_k\) with \(k\to\infty\).  

Similar problems have already been considered. For example, Lester and Yesha \cite{lesteryesha} study the distribution of sums of Hecke eigenvalues over short intervals. Sums of eigenvalues in progressions have also been investigated in \cite{blomer}, \cite{fgkm}, \cite{lauzhao}, \cite{lesteryesha} and \cite{lu}. Notably, Lau and Zhao \cite{lauzhao} prove asymptotics for the variance of these sums (on average over the congruence classes) which demonstrate an interesting transition in the average size of the sums as the length of the sums varies relative to the modulus.

The sums \(\mathcal S(x,f)\) themselves have been studied previously, for fixed \(f\) (and \(k\)). Indeed, Hafner and Ivi\'c \cite[Theorem 1]{hafnerivic} showed
\begin{equation}\label{hafivicbound} 
\mathcal S(x,f)\ll_f x^{1/3}.
\end{equation} 
(This can be slightly improved, see \cite{rankin90}, \cite{wu} and \cite{tangwu}.) Moreover, the following mean-square estimate is known:
\begin{equation}\label{cnmeansquare}
\frac1X\int_0^X \Big|\sum_{n\leq x}\lambda_f(n)\Big|^2dx=c_f X^{1/2}+\mathcal O(\log^2 X).
\end{equation}
This is easily deduced from a result of Chandrasekharan and Narasimhan \cite[Theorem 1]{c-n} (see also \cite{walfisz}). From (\ref{cnmeansquare}), one may expect the sums \(\mathcal S(x,f)\) to be of size roughly \(x^{1/4}\) on average.


However, preceding work \cite{paper1} investigated the sums \(\mathcal S(x,f)\) in the regime \(x\leq k^{2-o(1)}\), and observed rather different behaviour.
We review this now.
Define the averaging operator 
\begin{equation}\label{favs} 
\langle g(f)\rangle=\sum_{f\in \mathcal B_k}\omega(f)g(f),
\end{equation}
where \(\omega(f)\) is the \emph{harmonic weight}
\[\omega(f)=\frac{\Gamma(k-1)}{(4\pi)^{k-1}\|f\|^2}.\]
The harmonic weight arises naturally from the Petersson trace formula. We remark that \(\langle 1\rangle =\sum_f \omega(f)=1+\mathcal O(e^{-k})\). In \cite{paper1}, the first and second moments of the sums \(\mathcal S(x,f)\) were studied. It was proved that for \(x,k\to\infty\) with \(x=o(k^2/\log^6k)\), one has 
\begin{equation*}
\langle \mathcal S(x,f)\rangle \ll e^{-\sqrt k} \: \text{ and } \: \langle \mathcal S(x,f)^2\rangle\sim c(x) x,
\end{equation*}
where \(c(x)=c_k(x)\) is an explicit function satisfying \(1/100\leq c(x)\leq 2\). In fact, \(c(x)=1\) provided \(x\notin [k/(8\pi), k/(4\pi)]\). Therefore one expects \(\mathcal S(x,f)\) to be roughly of size \(x^{1/2}\) for most \(x=o(k^2/\log^6 k)\).

However, it was shown that the sums \(\mathcal S(x,f)\) transition in size approximately when \(k^2/(32\pi^2)\leq x\leq k^2/(16\pi^2)\), and we expect the sums to be considerably smaller after this transition (i.e. for \(x\geq k^2/(16\pi^2)\)). 
The precise asymptotic behaviour is unclear around the transition, and it is an interesting problem to extend our results to address this regime. 
In this paper, we consider the regime where \(x\geq k^2/(8\pi^2)\), and demonstrate that the average size of \(\mathcal S(x,f)\) is around \(x^{1/4}\) when \(k^2/(8\pi^2)\leq x\leq k^{12/5-\epsilon}\).

\subsection{Statement of Results}

\begin{remark} 
Throughout this article, the implied constants in \(\ll\) and \(\mathcal O\) notation depend at most upon \(\epsilon>0\) unless otherwise specified.
\end{remark}

We now state the theorems. 
The following result is essentially the bound (\ref{hafivicbound}), but holds uniformly for \(f\in\mathcal B_k\). 

\begin{theorem}\label{thmbound}
Let \(f\in \mathcal B_k\). Then for \(x\geq k^2/(8\pi^2)\) and any \(\epsilon>0\), we have
\[\mathcal S(x,f)\ll x^{1/3+\epsilon}.\]
\end{theorem}

We next evaluate the first and second moments of the sums \(\mathcal S(x,f)\). First, we must introduce some notation. Throughout this paper, we write \(\kappa=k-1\) for convenience.  We define
\[\omega(z)=\omega_{\kappa}(z)=(z^2-\kappa^2)^{1/2}-\kappa\arctan\big((z^2/\kappa^2-1)^{1/2}\big)-\pi/4,\]
and set
\begin{multline}\label{bigomegadef}
\Omega(n,x)=\Omega_\kappa(n,x)\\ 
\coloneqq 2(32\pi^2-\kappa^2/(nx))^{-3/4}\sin \omega(4\pi\sqrt{2nx})-(16\pi^2 -\kappa^2/(nx))^{-3/4}\sin\omega(4\pi\sqrt{nx}),
\end{multline}
provided \(nx>\kappa^2/(16\pi^2)\).
Importantly, \(\Omega\) satisfies \(\Omega(n,x)\ll 1\) for all integers \(n\geq1\) whenever \(x\geq k^2/(8\pi^2)\). 
We have the following estimates for the first and second moments.

\begin{theorem}\label{thmmean}
Let \(\epsilon>0\). If \(k^2/(8\pi^2)\leq x\leq k^{4}\), then 
\[\langle \mathcal S(x,f)\rangle= (-1)^{k/2}4\sqrt{2\pi} \Omega(1,x)x^{1/4}+\mathcal O(x^{1/2}k^{-1+\epsilon}).\]
\end{theorem}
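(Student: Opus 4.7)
The plan is to apply the Petersson trace formula to $\langle\lambda_f(n)\rangle$. Since $\lambda_f(1)=1$, this yields
\[\langle\mathcal S(x,f)\rangle = 2\pi i^{-k}\sum_{c\geq 1}\frac{1}{c}\sum_{x\leq n\leq 2x} S(1,n;c)\,J_{k-1}\Big(\frac{4\pi\sqrt n}{c}\Big),\]
the diagonal contribution $\delta_{n,1}$ vanishing since $x\geq k^2/(8\pi^2)$ is much larger than $1$. I then treat $c=1$ as the main contribution and $c\geq 2$ as error.

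For $c=1$, the hypothesis $x\geq k^2/(8\pi^2)$ gives $4\pi\sqrt n > \kappa$ for every $n\in[x,2x]$, placing the Bessel function in its oscillatory regime. Invoking the standard uniform asymptotic $J_{k-1}(y)\sim \sqrt{2/\pi}\,(y^2-\kappa^2)^{-1/4}\cos\omega(y)$ (with exactly the phase $\omega$ defined in the paper), the main term reduces to evaluating
\[\sum_{x\leq n\leq 2x}\frac{\cos\omega(4\pi\sqrt n)}{(16\pi^2 n - \kappa^2)^{1/4}}.\]
Poisson summation in $n$ produces dual frequencies $m\in\mathbb Z$. Setting $g(t)=\omega(4\pi\sqrt t)$, one computes $g'(t) = \sqrt{16\pi^2 t-\kappa^2}/(2t) = O(1/\sqrt x)$, so the combined phase $g(t)-2\pi m t$ has no stationary point in $[x,2x]$ for $m\neq 0$ and repeated integration by parts shows the $m\neq 0$ contribution is negligible. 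Only the $m=0$ frequency (i.e.\ the integral) survives, and a single integration by parts on $\int_x^{2x}\cos\omega(4\pi\sqrt t)/(16\pi^2 t-\kappa^2)^{1/4}\,dt$ generates two boundary terms: at $n=2x$ a term proportional to $(32\pi^2-\kappa^2/x)^{-3/4}\sin\omega(4\pi\sqrt{2x})\,x^{1/4}$, and at $n=x$ a term proportional to $(16\pi^2-\kappa^2/x)^{-3/4}\sin\omega(4\pi\sqrt x)\,x^{1/4}$. Together with the $2\pi i^{-k}=(-1)^{k/2}2\pi$ from Petersson, these reassemble into $(-1)^{k/2}4\sqrt{2\pi}\,\Omega(1,x)\,x^{1/4}$.

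For $c\geq 2$, the goal is to bound the contribution by $O(x^{1/2}k^{-1+\epsilon})$. I would use Weil's bound $|S(1,n;c)|\ll d(c)\sqrt c$. Terms with $c$ larger than roughly $4\pi\sqrt{2x}/\kappa$ place the Bessel function in its exponentially decaying regime and are trivially negligible. For smaller $c$ one again applies the oscillatory Bessel expansion, opens the Kloosterman sum, and applies Poisson summation in $n$; the resulting stationary-phase analysis extracts cancellation from the joint oscillation of the Kloosterman exponential and the Bessel phase. Combining this cancellation with the $c$-sum and the Weil bound should provide the required saving of $k$ relative to the trivial bound.

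The main obstacle will be obtaining the sharp error $O(x^{1/2}k^{-1+\epsilon})$ rather than something polynomially weaker. A naive estimate on the $c\geq 2$ sum using only Weil's bound and $J_{k-1}(y)\ll y^{-1/2}$ is too large, so cancellation is essential. The most delicate portion is the transitional range where $4\pi\sqrt n/c$ is close to $\kappa$: here the oscillatory asymptotic breaks down and must be replaced by Airy-type asymptotics, and one must keep careful track of the amplitude factors $(16\pi^2 n -\kappa^2 c^2)^{-1/4}$ which become large. Handling this range carefully, together with the stationary-phase bookkeeping for small $c$, is where the bulk of the technical work lies.
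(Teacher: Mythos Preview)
Your approach is genuinely different from the paper's, and the comparison is instructive.

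The paper does \emph{not} apply Petersson directly to the long sum $\mathcal S(x,f)$. Instead it first applies the Vorono\"i formula (Lemma~\ref{vorprop}) with the smoothing parameter $\Delta=x^{1/2}k^{1-\epsilon/2}$, which transforms $\mathcal S(x,f)$ into a dual sum of length $\leq (k^2+\Delta^2)k^{\epsilon/2}/x\leq k^2/10^4$. Only \emph{then} does it take the $f$-average. Because $n\cdot 1\leq k^2/10^4$ for every surviving term, the argument of every Bessel function in the Petersson off-diagonal satisfies $4\pi\sqrt{n}/c\leq k/4$, so the entire off-diagonal is $O(e^{-k})$ by Lemma~\ref{trace}\ref{traceii}. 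The only surviving term is $n=1$, and Lemma~\ref{tildewibp} reads off the main term. The error $O(x^{1/2}k^{-1+\epsilon})$ comes entirely from the smoothing loss $x\log x/\Delta$ in Vorono\"i---no Airy analysis, no Weil bound, no stationary phase over $c\geq 2$ is needed at all.

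Your route reverses the order: Petersson first, then analyse the resulting $c$-sum. Your $c=1$ term is precisely the (unsmoothed) Vorono\"i dual sum, and your Poisson/boundary-term computation correctly reproduces $(-1)^{k/2}4\sqrt{2\pi}\,\Omega(1,x)x^{1/4}$. But now all of the work the paper sidesteps is pushed into your $c\geq 2$ terms. For $x$ near $k^2/(8\pi^2)$ and $c=2$, the argument $2\pi\sqrt n$ passes through the transitional region of $J_{k-1}$ near $n=2x$; as you correctly anticipate, this forces Airy-type asymptotics and a careful interplay with the Kloosterman phase to reach $O(x^{1/2}k^{-1+\epsilon})$. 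This analysis can likely be carried out (the alternating sign $S(1,n;2)=(-1)^{n+1}$ already buys substantial cancellation in the transition window), but it is considerably longer, and your sketch does not yet establish the claimed error uniformly in $k^2/(8\pi^2)\leq x\leq k^4$. The paper's ordering---shorten first, average second---eliminates the difficulty entirely: what you identify as ``the bulk of the technical work'' simply does not arise.
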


\begin{restatable}{theorem}{thmvar}\label{thmvar}
Let \(\epsilon>0\). If \(k^2/(8\pi^2)\leq x\leq k^{12/5}\), then
\begin{equation*}
\langle \mathcal S(x,f)^2\rangle = 32\pi x^{1/2}\sum_{n\geq1} \frac{\Omega(n,x)^2}{n^{3/2}} +\mathcal O(x^{3/4}k^{-3/5+\epsilon})+\mathcal O(k^{29/30+\epsilon}).
\end{equation*}
Moreover, the above main term satisfies
\[x^{1/2}\exp\Big(-\frac{\log x}{\log\log x}\Big) \ll 32\pi x^{1/2}\sum_{n\geq1}\frac{\Omega(n,x)^2}{n^{3/2}} \ll x^{1/2}.\]
\end{restatable}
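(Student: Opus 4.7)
The plan is to derive an approximate dual formula for $\mathcal S(x,f)$, square it, and apply the Petersson trace formula. We aim to show
\[
\mathcal S(x,f) = (-1)^{k/2}4\sqrt{2\pi}\, x^{1/4}\sum_{n\leq N}\frac{\lambda_f(n)}{n^{3/4}}\Omega(n,x)+E_N(x,f)
\]
for a suitable truncation $N$ and a controlled error $E_N(x,f)$. This expansion essentially underlies the proof of Theorem \ref{thmmean}: Voronoi summation for $\mathrm{SL}_2(\mathbb Z)$ gives, for smooth $\psi$ of compact support,
\[
\sum_n \lambda_f(n)\psi(n) = 2\pi i^{-k}\sum_n \lambda_f(n)\int_0^\infty \psi(t)J_{k-1}(4\pi\sqrt{nt})\, dt.
\]
Applied to $\psi\approx\mathbf 1_{[x,2x]}$ and substituting $s=4\pi\sqrt{nt}$, the inner integral becomes $(8\pi^2 n)^{-1}\int_{4\pi\sqrt{nx}}^{4\pi\sqrt{2nx}}sJ_{k-1}(s)\,ds$; since $x\geq\kappa^2/(8\pi^2)$ forces $s\geq\kappa\sqrt{2}$, we may replace $J_{k-1}$ by its Debye asymptotic and integrate by parts once against the phase $\omega$. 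The two boundary contributions produce exactly the two sines comprising $\Omega(n,x)$, with the correct constant $4\sqrt{2\pi}$; smoothing errors, higher-order Debye remainders, and the truncation beyond $n=N$ are absorbed into $E_N(x,f)$.

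Squaring and applying Petersson to $\langle\lambda_f(m)\lambda_f(n)\rangle$, the diagonal contribution is
\[
32\pi x^{1/2}\sum_{n\leq N}\frac{\Omega(n,x)^2}{n^{3/2}},
\]
which extends to the full series at cost $O(x^{1/2}N^{-1/2})$. The cross terms between the main part of the expansion and $E_N(x,f)$, together with $\langle E_N(x,f)^2\rangle$, are controlled by Cauchy--Schwarz, reducing to first-moment-type estimates analogous to those yielding Theorem \ref{thmmean}.

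The principal technical obstacle is the off-diagonal contribution from Petersson, namely
\[
x^{1/2}\sum_{c\geq 1}\frac{1}{c}\sum_{m,n\leq N}\frac{\Omega(m,x)\Omega(n,x)}{(mn)^{3/4}}S(m,n;c)J_{k-1}\!\left(\frac{4\pi\sqrt{mn}}{c}\right).
\]
Exponential decay of $J_{k-1}$ restricts the effective range to $c\ll\sqrt{mn}/\kappa$; within this range, the Debye asymptotic for $J_{k-1}$, the Weil bound on $S(m,n;c)$, and the oscillation of $\Omega(m,x)\Omega(n,x)$ reduce the estimate to an oscillatory triple sum in $(c,m,n)$. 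The two claimed error terms $O(x^{3/4}k^{-3/5+\epsilon})$ and $O(k^{29/30+\epsilon})$ should correspond to different ranges of $c$, with the constraint $x\leq k^{12/5}$ emerging from the balance point between them. Carrying out these estimates uniformly in the parameters $N$, the smoothing scale, and the $c$-cutoff will form the bulk of the technical work.

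For the two-sided bound on the main term, the upper bound $\ll x^{1/2}$ is immediate from $\Omega(n,x)\ll 1$ and convergence of $\sum n^{-3/2}$. For the lower bound, one expands $\Omega(n,x)^2$ into two $\sin^2$ contributions of definite sign and a cross term whose phase is $\omega(4\pi\sqrt{2nx})\pm\omega(4\pi\sqrt{nx})$. Applying $\sin^2\theta=\tfrac12(1-\cos 2\theta)$ produces a ``constant'' part $\sum_n[2A(n)^2+B(n)^2/2]/n^{3/2}$ (with $A,B$ the two amplitudes of $\Omega$) which is bounded below unconditionally, while the oscillatory remainder is controlled by partial summation. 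The loss incurred in balancing these contributions accounts for the $\exp(-\log x/\log\log x)$ factor in the stated lower bound.
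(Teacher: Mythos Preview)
Your overall architecture---Vorono\"i-type dual expansion, square, Petersson trace formula, diagonal produces the main term, off-diagonal is the error---matches the paper. The handling of the diagonal and of the smoothing/truncation errors is also essentially as in the paper (which works with a smoothed Vorono\"i formula with parameter $\Delta$, eventually choosing $\Delta=x^{1/2}k^{3/5}$, rather than a sharp cutoff at $N$, but this is cosmetic).

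There is, however, a real gap in the off-diagonal treatment. The Weil bound plays no role in the paper's argument, and a direct application of Weil together with pointwise bounds on $J_{k-1}$ and $\Omega$ is far too weak: with $N\asymp \Delta^2/x$ and $c\ll N/k$, one gets at best $x^{1/2}Nk^{-1+\epsilon}$, which with $\Delta=x^{1/2}k^{3/5}$ is $x^{1/2}k^{1/5+\epsilon}$, not $o(x^{1/2})$. Extracting further cancellation from ``oscillation of $\Omega(m,x)\Omega(n,x)$'' is the whole difficulty, and you have not indicated a mechanism. What the paper does is open the Kloosterman sum, isolate the $n$-sum, and apply Poisson summation; the dual sum has effective length $\asymp\sqrt{mx}/k$ (much shorter than $N$), and its terms are oscillatory integrals handled by a careful stationary-phase analysis of the Bessel function across its transitional and oscillatory regimes. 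The two error terms $x^{3/4}k^{-3/5+\epsilon}$ and $k^{29/30+\epsilon}$, and the constraint $x\leq k^{12/5}$, emerge from balancing the various ranges in that analysis and the choice of $\Delta$, not from a $c$-cutoff alone.

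Your proposed lower bound for the main term also differs from the paper's and, as sketched, does not close. Writing $\Omega^2$ as a constant part $\sum_n(2A(n)^2+B(n)^2/2)n^{-3/2}$ plus oscillatory cosine sums does give a constant part $\asymp 1$, but nothing prevents the oscillatory part from nearly cancelling it for specific $x$: since $\Omega(n,x)^2\geq 0$, all you know a priori is that the oscillatory part is bounded by the constant part, not that it is strictly smaller. To get a quantitative gap you would need an equidistribution input anyway. The paper instead shows directly that for a positive proportion of $n\leq N$ one has $\Omega(n,x)\gg 1$, by proving that $h(n)=\omega(4\pi\sqrt{2nx})/(2\pi)$ is asymptotically equidistributed mod $1$ via the Erd\H os--Tur\'an inequality and van der Corput's $p$-th derivative estimate (with $p\sim\tfrac12\log\log x$); optimising $N$ produces the $\exp(-\log x/\log\log x)$ loss.
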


\begin{remark}
The asymptotic behaviour of the variance
\[\langle (\mathcal S(x,f)-\langle \mathcal S(x,f)\rangle)^2 \rangle=\langle \mathcal S(x,f)^2\rangle -\langle \mathcal S(x,f)\rangle ^2(1+\mathcal O(e^{-k}))\]
is easily deduced from the above. Indeed, if \(k^2/(8\pi^2)\leq x\leq k^{12/5}\), one combines Theorems \ref{thmmean} and \ref{thmvar} to see
\[\mathcal \langle (\mathcal S(x,f)-\langle \mathcal S(x,f)\rangle)^2 \rangle=32\pi x^{1/2}\sum_{n\geq2} \frac{\Omega(n,x)^2}{n^{3/2}} +\mathcal O(x^{3/4}k^{-3/5+\epsilon})+\mathcal O(k^{29/30+\epsilon}).\]
As before, the main term satisfies
\[x^{1/2}\exp\Big(-\frac{\log x}{\log\log x}\Big)\ll 32\pi x^{1/2}\sum_{n\geq2} \frac{\Omega(n,x)^2}{n^{3/2}}\ll x^{1/2}.\]
\end{remark}

We now offer an explanation for the significant transition in the sizes of the sums \(\mathcal S(x,f)\) (occurring approximately when \(k^2/(32\pi^2)\leq x\leq k^2/(16\pi^2)\)). This phenomenon may be explained with reference to the Vorono\"i type summation formula for \(\mathcal S(x,f)\). This is formulated precisely in Lemma \ref{vorprop}, but very roughly speaking it states that for \(f\in\mathcal B_k\), 
\begin{equation}\label{roughvor}
\mathcal S(x,f)\approx 2\pi (-1)^{k/2} x\sum_{n\geq1}\lambda_f(n)\int_1^2 J_{k-1}(4\pi\sqrt{nxt})dt.
\end{equation}
Here \(J_{k-1}\) denotes the Bessel function. The behaviour of \(J_{k-1}(z)\) is roughly as follows. When the argument \(z\) is much smaller than the index \(k-1\), \(J_{k-1}(z)\) is negligibly small. 
When \(z\approx k-1\), \(J_{k-1}(z)\) increases to a large global maximum.
To the right of this peak, once \(z\) is larger than \(k-1\), the Bessel function oscillates with decaying amplitude. 

Note that if \(x\leq k^2/(32\pi^2)\) then there exist integers \(n\geq1\) with \( k^2/(32\pi^2x)\leq n\leq k^2/(16\pi^2x)\). For these \(n\)'s, there exists \(t\in[1,2]\) such that \(4\pi\sqrt{nxt}\approx k\), and therefore one expects the peak of the Bessel function \(J_{k-1}(4\pi\sqrt{nxt})\) to produce a large contribution to the integral part of (\ref{roughvor}). Consequently, we expect that the sums \(\mathcal S(x,f)\) can be large when \(x\leq k^2/(32\pi^2)\). 

On the other hand, if \(x\geq k^2/(16\pi^2)\), then \(4\pi\sqrt{nxt}\geq k\) for all \(n\geq 1\) and \(1\leq t\leq 2\). Thus all Bessel functions appearing in the integral part of (\ref{roughvor}) are in their oscillatory regime, and consequently one expects a lot of cancellation in these integrals. Consequently, once \(x\geq k^2/(16\pi^2)\), we expect the sizes of the sums \(\mathcal S(x,f)\) to be relatively small.

\subsection{Acknowledgements} 

I would like to thank Stephen Lester for many helpful discussions and useful comments on an earlier draft of this paper. I also thank Bingrong Huang for pointing out the relevant work \cite{hough}. This work was supported by the Additional Funding Programme for Mathematical Sciences, delivered by EPSRC (EP/V521917/1) and the Heilbronn Institute for Mathematical Research.

\section{Preliminaries}\label{secprelim}

In this section we summarise some results for later use.

\subsection{Bessel Functions}

The Bessel functions \(J_\nu(z)\) will appear throughout this paper. They are defined \cite[p.40]{watson} by
\begin{equation}\label{besseldef}
J_\nu(z)=\sum_{l\geq 0}\frac{(-1)^l}{l!\Gamma(\nu+1+l)}\Big(\frac{z}{2}\Big)^{\nu+2l}.
\end{equation} 
Throughout this section, we assume that the index \(\nu\) and the argument \(z\) are real and positive. The behaviour of \(J_\nu(z)\) is roughly as follows. When the argument \(z\) is small relative to the index \(\nu\), \(J_\nu(z)\) is small. The transition regime is where \(|z- \nu|\ll \nu^{1/3}\), i.e. the argument is approximately equal to the index. In this regime \(J_\nu(z)\) reaches its global maximum of size \(\approx \nu^{-1/3}\). Finally, once the argument becomes larger than the index, the Bessel functions oscillate, with the amplitude of the oscillations decaying roughly like \(z^{-1/2}\).

A detailed discussion of the Bessel function (with references) is given in \cite[Appendix A]{paper1}. Here it suffices to summarise some useful facts in the following lemma.

\begin{lemma}\label{bessellem}
Let \(\nu>0\) be sufficiently large. We have the following.
\begin{enumerate}[label=(\roman*)]

\item \label{besi} If \(0\leq z\leq (\nu+1)/4\), then
\begin{equation}\label{bessboundsmallarg}
J_\nu(z)\ll z^2\exp\Big(-\frac{14\nu}{13}\Big).
\end{equation}

\item \label{besii} If \(0\leq z\leq (\nu+1)-(\nu+1)^{1/3+\delta}\) for some \(0\leq\delta\leq 2/3\), then
\begin{equation}\label{bessel1}
J_\nu(z)\ll \exp(-\nu^\delta).
\end{equation}

\item \label{besiii} Uniformly for \(z\geq0\), we have the bound 
\begin{equation}\label{bessel2}
J_\nu(z)\ll \nu^{-1/3}.
\end{equation}

\item \label{besiv} If \(z\geq \nu+\nu^{1/3}\), then 
\begin{multline}\label{bessel3}
J_\nu(z)=\sqrt{\frac{2}{\pi}}(z^2-\nu^2)^{-1/4}\cos \omega(z)+\sqrt{\frac{2}{\pi}}(z^2-\nu^2)^{-3/4}\Big(\frac{1}{8}+\frac{5}{24}\frac{\nu^2}{z^2-\nu^2}\Big)\sin \omega(z)\\
+\mathcal O\Big(\frac{z^4}{(z^2-\nu^2)^{13/4}}\Big),
\end{multline}
where \(\omega\) is given by 
\begin{equation}\label{omega}
\omega(z)=\omega_\nu(z)=(z^2-\nu^2)^{1/2}-\nu\arctan\big((z^2/\nu^2-1)^{1/2}\big)-\pi/4.
\end{equation}
\end{enumerate}
\end{lemma}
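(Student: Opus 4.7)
The plan is to establish each part from the classical theory of Bessel functions: parts \ref{besi} and \ref{besiv} follow from the Debye asymptotic expansion in the non-transitional regimes $z<\nu$ and $z>\nu$ respectively, while parts \ref{besii} and \ref{besiii} follow from Olver's uniform asymptotic expansion in terms of the Airy function. All of these are standard tools, and a detailed treatment along precisely these lines is given in \cite[Appendix A]{paper1}.

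For \ref{besi}, I would parameterise $z=\nu\sech\xi$ with $\xi>0$ and invoke the Debye asymptotic
\[
J_\nu(\nu\sech\xi)\ll\frac{\exp(-\nu(\xi-\tanh\xi))}{\sqrt{\nu\tanh\xi}},
\]
valid uniformly for $\xi$ bounded away from $0$. The hypothesis $z\leq(\nu+1)/4$ forces $\cosh\xi\geq 4$, so $\xi\geq\ln(4+\sqrt{15})$ and $\tanh\xi\geq\sqrt{15}/4$. Since $\xi\mapsto\xi-\tanh\xi$ is increasing (its derivative is $\tanh^2\xi$),
\[
\xi-\tanh\xi\geq\ln(4+\sqrt{15})-\sqrt{15}/4\approx 1.095>14/13,
\]
and the exponential bound $e^{-14\nu/13}$ follows for large $\nu$. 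The factor $z^2$ in the statement is a harmless weakening that accommodates $z$ near $0$, where $|J_\nu(z)|\ll (z/2)^\nu/\Gamma(\nu+1)$ is in fact much smaller than $z^2 e^{-14\nu/13}$.

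For \ref{besii} and \ref{besiii}, I would invoke Olver's uniform asymptotic expansion, which represents $J_\nu(z)$ in terms of the Airy function $\Ai$ via an analytic change of variable $z/\nu\mapsto\zeta$ with $\zeta(1)=0$, $\zeta>0$ for $z<\nu$, and $\zeta\sim 2^{1/3}(1-z/\nu)$ near $z=\nu$. The expansion takes the shape
\[
J_\nu(z)=\Phi(z/\nu)\,\nu^{-1/3}\Ai(\nu^{2/3}\zeta(z/\nu))\,(1+O(\nu^{-1})),
\]
with $\Phi$ a bounded smooth prefactor. Part \ref{besiii} follows immediately from the boundedness of $\Ai$ and $\Phi$. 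For \ref{besii}, the hypothesis $z\leq(\nu+1)-(\nu+1)^{1/3+\delta}$ translates to $\zeta\gg\nu^{-2/3+\delta}$, so the Airy argument $\nu^{2/3}\zeta\gg\nu^\delta$ is large and positive. The classical decay $\Ai(y)\ll\exp(-\tfrac{2}{3}y^{3/2})$ then yields $J_\nu(z)\ll\exp(-c\nu^{3\delta/2})\ll\exp(-\nu^\delta)$.

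Finally, for \ref{besiv} I would use the Debye expansion in the oscillatory regime $z>\nu$. Setting $z=\nu\sec\beta$ with $\beta\in(0,\pi/2)$, stationary phase applied to the standard integral representation of $J_\nu$ yields an asymptotic series whose first two terms read
\[
J_\nu(\nu\sec\beta)=\sqrt{\tfrac{2}{\pi\nu\tan\beta}}\Big(\cos\chi+\tfrac{1}{\nu\tan\beta}\big(\tfrac18+\tfrac{5}{24}\cot^2\beta\big)\sin\chi+\cdots\Big),
\]
with phase $\chi=\nu(\tan\beta-\beta)-\pi/4$. The identities $\nu\tan\beta=\sqrt{z^2-\nu^2}$, $\cot^2\beta=\nu^2/(z^2-\nu^2)$, and $\chi=\omega(z)$ convert these two terms into the main term of \eqref{bessel3}. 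The error is controlled by the next omitted term, which works out to $\ll z^4/(z^2-\nu^2)^{13/4}$, and the hypothesis $z\geq\nu+\nu^{1/3+\epsilon}$ keeps us safely away from the Airy transition so that the Debye expansion remains valid. The main obstacle throughout is uniformity: one must track the Debye and Olver remainder terms explicitly in both $z$ and $\nu$, which is accomplished via the careful error bounds assembled in \cite[Appendix A]{paper1}.
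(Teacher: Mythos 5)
Your proposal is correct and follows essentially the same route as the paper: parts \ref{besi}--\ref{besiii} are simply quoted from \cite[Lemma A.2]{paper1} (your Debye/Airy sketches match the standard arguments assembled there), and part \ref{besiv} is exactly the two-term truncation of the Olver--Debye oscillatory expansion with the same change of variables. The only point you gloss over is the one the paper spends most of its proof on, namely making ``the error is the next omitted term'' rigorous via Olver's total-variation remainder bounds for $\hat U_1$ and $\hat U_2$, which you correctly flag as the remaining uniformity issue.
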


\begin{remark}
For convenience, we record
\begin{equation}\label{omega'}
\omega'(z)=\frac{(z^2-\nu^2)^{1/2}}{z},
\end{equation}
and
\begin{equation}\label{omega''}
\omega''(z)=\frac{\nu^2}{z^2(z^2-\nu^2)^{1/2}}.
\end{equation}
\end{remark}

\begin{proof}
The bounds given in parts \ref{besi}, \ref{besii} and \ref{besiii} may be found in \cite[Lemma A.2]{paper1}. We now address part \ref{besiv}. The full asymptotic expansion in the oscillatory regime is computed, for example, by Olver \cite{olver} -- see Ch.10, §8, ex.8.2 (cf. Ch.10, §7--8). Taking \(n=1\) in Olver's expansion, i.e. truncating after the first two terms gives that for \(z>\nu\),
\begin{multline}\label{watsonlargearg}
J_\nu(z)=\sqrt{\frac2\pi}(z^2-\nu^2)^{-1/4}\Big\{\cos\omega(z)\hat U_0\Big(\Big(\frac{z^2}{\nu^2}-1\Big)^{-1/2}\Big)+\frac{\sin\omega(z)}{\nu}\hat U_1\Big(\Big(\frac{z^2}{\nu^2}-1\Big)^{-1/2}\Big)\\
+\mathcal O\Big(\nu^{-2}\exp\Big(\frac2\nu \mathrm{Var}\Big(\hat U_1; 0, \Big(\frac{z^2}{\nu^2}-1\Big)^{-1/2}\Big)\Big) \mathrm{Var}\Big(\hat U_2; 0, \Big(\frac{z^2}{\nu^2}-1\Big)^{-1/2}\Big)\Big)\Big\}.
\end{multline}
Here \(\mathrm{Var}(f;a,b)\) denotes the total variation of a function \(f\) on the interval \([a,b]\), and the \(\hat U_i\) are polynomials (cf. Ch.10, §7, (7.11)) given by
\begin{equation*}
\Hat{U}_0(t)=1, \Hat{U}_1(t)=\frac{1}{24} (3t+5t^3) \text{ and } \Hat{U}_2(t)=\frac{1}{1152} (81t^2+462t^4+385t^6).
\end{equation*}
Since \(\hat U_1\) and \(\hat U_2\) are both increasing and \(\hat U_1(0)=\hat U_2(0)=0\), one has 
\begin{equation*} 
\mathrm{Var}\Big(\hat U_1; 0, \Big(\frac{z^2}{\nu^2}-1\Big)^{-1/2}\Big)=\hat U_1\Big(\Big(\frac{z^2}{\nu^2}-1\Big)^{-1/2}\Big)
=\frac{\nu}{8(z^2-\nu^2)^{1/2}}+\frac{5\nu^3}{24(z^2-\nu^2)^{3/2}}.
\end{equation*}
Our assumption \(z\geq \nu+\nu^{1/3}\implies z^2-\nu^2\gg \nu^{4/3}\) implies
\begin{equation*} 
\mathrm{Var}\Big(\hat U_1; 0, \Big(\frac{z^2}{\nu^2}-1\Big)^{1/2}\Big)\ll \nu^{1/3}+\nu\ll \nu.
\end{equation*}
Similarly,
\begin{equation*}
\mathrm{Var}\Big(\hat U_2; 0, \Big(\frac{z^2}{\nu^2}-1\Big)^{-1/2}\Big)\Big)
=
\hat U_2\Big(\Big(\frac{z^2}{\nu^2}-1\Big)^{-1/2}\Big)
\ll 
\frac{\nu^2}{(z^2-\nu^2)}\Big(1+\frac{\nu^4}{(z^2-\nu^2)^2}\Big).
\end{equation*}

So the error term in (\ref{watsonlargearg}) is 
\begin{multline}\label{watsonlargeargerrorreal}
\nu^{-2}\exp\Big(\frac2\nu \mathrm{Var}\Big(\hat U_1; 0, \Big(\frac{z^2}{\nu^2}-1\Big)^{-1/2}\Big)\Big) \mathrm{Var}\Big(\hat U_2; 0, \Big(\frac{z^2}{\nu^2}-1\Big)^{-1/2}\Big)\\
\ll \exp(\mathcal O(1))\frac{1}{(z^2-\nu^2)}\Big(1+\frac{\nu^4}{(z^2-\nu^2)^2}\Big)\ll \frac{z^4}{(z^2-\nu^2)^3}.
\end{multline}
(The last inequality holds for any \(z>\nu\).) 
The asymptotic given in part \ref{besiv} now follows immediately from (\ref{watsonlargearg}) and the bound (\ref{watsonlargeargerrorreal}).
\end{proof}

\subsection{The Petersson Trace Formula}

The key tool for computing averages is the Petersson trace formula, which we now formulate. This gives an expression for the averages \(\langle \lambda_f(n)\lambda_f(m)\rangle\) as diagonal and off-diagonal terms. Conveniently, the off-diagonal is negligibly small in certain ranges of \(m\) and \(n\).

\begin{lemma}[Petersson Trace Formula]\label{trace}
Let \(S(m,n;c)\) denote the Kloosterman sums 
\[S(m,n;c)=\sum_{\substack{a\Mod c\\ (a,c)=1}} e\Big(\frac{a^*m+an}{c}\Big),\]
where \(a^*\) denotes the multiplicative inverse of \(a\) modulo \(c\). Set
\[\delta_{mn}=\begin{cases} 1 &\text{ if } m=n,\\ 0 &\text{ otherwise}.\end{cases}\]
Let \(k\) be sufficiently large, and \(\langle\cdot\rangle\) as given in (\ref{favs}). We have the following.
\begin{enumerate}[label=(\roman*)]
\item \label{tracei}\cite[Theorem 3.6]{iwaniec} For any positive integers \(m\) and \(n\),
\begin{equation*}\label{pet}
\langle \lambda_f(n)\lambda_f(m)\rangle=\delta_{mn}+2\pi (-1)^{k/2}\sum_{c\geq1} c^{-1}S(m,n;c)J_{k-1}\Big(\frac{4\pi\sqrt{mn}}{c}\Big).
\end{equation*}
\item \label{traceii} \cite[Lemma 2.1]{rudnicksound} If \(m\) and \(n\) are positive integers satisfying \(mn\leq k^2/10^4\), then
\begin{equation*}
\langle \lambda_f(n)\lambda_f(m)\rangle=\delta_{mn}+\mathcal O(e^{-k}).
\end{equation*}
\end{enumerate}
\end{lemma}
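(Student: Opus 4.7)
I would follow the classical Petersson construction via Poincaré series. Define the $m$-th holomorphic weight-$k$ Poincaré series
\[P_m(z)=\sum_{\gamma\in\Gamma_\infty\backslash\mathrm{SL}_2(\mathbb{Z})}(cz+d)^{-k}e(m\gamma z),\]
where $c,d$ are the bottom row of $\gamma$. Unfolding the Petersson inner product against any $f\in S_k$ with Fourier coefficients $a_f(n)$ gives $\langle P_m,f\rangle=\Gamma(k-1)(4\pi m)^{1-k}\overline{a_f(m)}$. Separately, splitting the sum defining $P_m$ according to the lower-left entry $c$ (with $c=0$ producing the diagonal) and evaluating the resulting exponential integrals via the Hankel representation of the Bessel function yields the Fourier expansion
\[P_m(z)=\sum_{n\geq1}\Big\{\delta_{mn}+2\pi(-1)^{k/2}\Big(\frac{n}{m}\Big)^{(k-1)/2}\sum_{c\geq1}\frac{S(m,n;c)}{c}J_{k-1}\Big(\frac{4\pi\sqrt{mn}}{c}\Big)\Big\}e(nz).\]
Expanding $P_m=\sum_{f\in\mathcal B_k}\|f\|^{-2}\langle P_m,f\rangle f$ in the orthogonal eigenbasis, comparing $n$-th Fourier coefficients obtained the two ways, and substituting $a_f(n)=\lambda_f(n)n^{(k-1)/2}$ recovers the identity in part (i).

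\textbf{Plan for part (ii).} I would apply part (i) and bound the off-diagonal directly. Under the hypothesis $mn\leq k^2/10^4$, for every $c\geq 1$ we have $4\pi\sqrt{mn}/c\leq 4\pi k/100<k/4$, so each Bessel argument lies well inside the small-argument regime of Lemma \ref{bessellem}\ref{besi}; that lemma gives $J_{k-1}(4\pi\sqrt{mn}/c)\ll (mn/c^2)\exp(-14(k-1)/13)$. The bound $|S(m,n;c)|/c\leq d(c)$ (from Weil's estimate with $(m,n,c)^{1/2}\leq c^{1/2}$) then gives
\[\sum_{c\geq1}\frac{|S(m,n;c)|}{c}\,\Big|J_{k-1}\Big(\frac{4\pi\sqrt{mn}}{c}\Big)\Big|\ll mn\,e^{-14(k-1)/13}\sum_{c\geq1}\frac{d(c)}{c^2}\ll k^2\,e^{-14(k-1)/13}\ll e^{-k},\]
the last inequality being valid for all sufficiently large $k$. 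Combining with part (i) finishes the proof.

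\textbf{Main obstacle.} The substantive work is in part (i), where one must carefully verify that the Bruhat decomposition of $\Gamma_\infty\backslash\mathrm{SL}_2(\mathbb{Z})/\Gamma_\infty$ gives Kloosterman sums with exactly the modulus $c$, and that the resulting oscillatory integral is precisely $J_{k-1}$ with the correct constant $2\pi(-1)^{k/2}$ and sign. Once part (i) is in hand, part (ii) is essentially automatic, since the super-exponential decay of $J_{k-1}$ in the small-argument regime trivially dominates the polynomial factors in $m,n,c$ with no need to exploit cancellation in the Kloosterman sums themselves.
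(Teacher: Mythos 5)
Your plan is correct, and it is essentially the argument behind the sources the paper cites for this lemma (the paper gives no proof of its own, deferring part (i) to \cite[Theorem 3.6]{iwaniec} and part (ii) to \cite[Lemma 2.1]{rudnicksound}): part (i) is the standard Poincar\'e-series/unfolding derivation, and part (ii) follows by exactly the computation you describe, using Lemma \ref{bessellem}\ref{besi} in the small-argument regime. The only remark worth making is that Weil's bound is unnecessary in part (ii) --- the trivial bound \(|S(m,n;c)|\leq c\) combined with the factor \(c^{-2}\) from \(z^2=16\pi^2 mn/c^2\) already makes the \(c\)-sum converge, which is precisely how the paper handles the analogous truncation in Lemma \ref{offdiag3lem}.
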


\subsection{A Vorono\"i Summation Formula}

We now state a Vorono\"i type summation formula for the sums \(\mathcal S(x,f)\). A version of  this for smoothed sums of Hecke eigenvalues is given in \cite[ex.9, p.83]{iwanieckowalski}; the version here is adapted for the sharp cut-off sums \(\mathcal S(x,f)\). One first requires a suitable smoothing function, given as follows.

\begin{definition}\label{wdef}
Given \(\Delta\geq1\), denote by \(w=w_\Delta\) a smooth function \(w:\mathbb{R}\to \mathbb{R}\) satisfying the following:
\begin{itemize}
\item \(\supp w= [1,2],\)
\item \(w(\xi)=1\) for \(1+\Delta^{-1}\leq \xi\leq 2-\Delta^{-1},\)
\item for all integers \(j\geq0\) and all \(\xi\), we have \(w^{(j)}(\xi)\ll_j \Delta^j\).
\end{itemize}
\end{definition}

\begin{lemma}[Vorono\"i Summation Formula {\cite[§2.3]{paper1}}] \label{vorprop}
Let \(\Delta\) be a sufficiently large parameter satisfying \(\Delta\leq x^{1-\epsilon}\) for some \(\epsilon>0\). Let \(w=w_\Delta\) be the associated smooth function given in Definition \ref{wdef} above. Then for \(f\in\mathcal B_k\), we have
\begin{equation*}
\mathcal S(x,f)=2\pi (-1)^{k/2} x\sum_{n\geq1} \lambda_f(n)\tilde w\Big(\frac{nx}{k^2+\Delta^2}\Big)+\mathcal O \Big(\frac{x\log x}{\Delta}\Big),
\end{equation*}
where 
\begin{equation}\label{tildew}
\tilde{w}(\xi)=\tilde w_\Delta(\xi)=\int_0^\infty w(t)J_{k-1}(4\pi \sqrt{(k^2+\Delta^2)\xi t})dt.
\end{equation}
Moreover, for any integer \(A\geq0\), \(\tilde w\) satisfies the bound 
\begin{equation} \label{tildewibped} 
\tilde w (\xi)\ll_A \xi^{-A}.
\end{equation}
\end{lemma}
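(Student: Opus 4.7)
My plan is to first replace the sharp cut-off $\mathbf{1}_{[x,2x]}$ by a smooth one (controlling the error via Deligne's bound), then invoke a standard Voronoi summation formula for holomorphic Hecke eigenforms applied to the smoothed sum, and finally establish the rapid decay of $\tilde w$ by integration by parts against the Bessel oscillation.

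For the first step, I would write $\mathcal S(x,f)=\sum_n\lambda_f(n)w(n/x)+E$, where $w=w_\Delta$ is as in Definition \ref{wdef}. The difference $\mathbf{1}_{[x,2x]}(n)-w(n/x)$ is supported on the two transition intervals of length $x/\Delta$, so by Deligne's bound $|\lambda_f(n)|\leq d(n)\ll n^\epsilon$ one obtains $E\ll (x/\Delta)\log x$, matching the claimed error. For the second step, I would apply the Voronoi summation formula for full level holomorphic cusp forms (e.g.\ \cite[Ex.9, p.83]{iwanieckowalski}):
\[\sum_{n\ge 1}\lambda_f(n)g(n)=2\pi(-1)^{k/2}\sum_{n\ge 1}\lambda_f(n)\int_0^\infty g(u)J_{k-1}(4\pi\sqrt{un})\,du,\]
valid for any sufficiently smooth, rapidly decaying $g$. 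Choosing $g(u)=w(u/x)$ and substituting $u=xt$, the inner integral becomes $x\int_0^\infty w(t)J_{k-1}(4\pi\sqrt{nxt})\,dt=x\tilde w(nx/(k^2+\Delta^2))$ by definition \eqref{tildew}. This produces the stated main term.

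For the bound \eqref{tildewibped}, I would separate cases. For bounded $\xi$, the compact support of $w$ together with $\|J_{k-1}\|_\infty\ll 1$ gives $\tilde w(\xi)\ll 1$. For large $\xi$, on $\supp w$ the argument $z(t)=4\pi\sqrt{(k^2+\Delta^2)\xi t}$ comfortably exceeds $(k-1)+(k-1)^{1/3+\epsilon}$, so by Lemma \ref{bessellem}\ref{besiv} one may replace $J_{k-1}(z(t))$ by its oscillatory asymptotic $\sqrt{2/\pi}(z^2-(k-1)^2)^{-1/4}\cos\omega(z)+\ldots$ up to a negligible error. The resulting integrals have smooth amplitude and phase $\omega(z(t))$ whose $t$-derivative is $\asymp \sqrt{(k^2+\Delta^2)\xi}/t$ by \eqref{omega'}. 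Integrating by parts and using $\|w^{(j)}\|_\infty\ll_j\Delta^j$, each step contributes a factor of order $\Delta/\sqrt{(k^2+\Delta^2)\xi}$, which is $\leq \xi^{-1/2}$ thanks to the key inequality $\sqrt{k^2+\Delta^2}\geq\Delta$. Iterating $2A$ times yields $\tilde w(\xi)\ll_A\xi^{-A}$.

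The principal obstacle is the uniformity in $k$ and $\Delta$ of the decay estimate on $\tilde w$: the factors of $\Delta$ arising from differentiating $w$ must be absorbed by the oscillation of the Bessel kernel, and it is exactly the elementary bound $\sqrt{k^2+\Delta^2}\geq\Delta$ — which motivates the appearance of $k^2+\Delta^2$ (rather than $k^2$ alone) in the scaling used to define $\tilde w$ — that makes each integration by parts profitable in the range $\xi\geq 1$.
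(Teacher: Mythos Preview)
The paper does not actually prove this lemma here; it simply cites \cite[Lemmas 2.4 \& 2.5]{paper1}. Your sketch is correct and is exactly the standard argument one expects (and which is carried out in the cited reference): smooth the sharp cut-off at cost $\sum_{n\in I}d(n)$ over two intervals of length $x/\Delta$, apply the level-one Vorono\"i formula to the smoothed sum, and then obtain the decay of $\tilde w$ by repeated integration by parts against the Bessel oscillation, using $\Delta\le\sqrt{k^2+\Delta^2}$ to ensure each step gains a factor $\le\xi^{-1/2}$.

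One small point of precision: to obtain the stated error $O(x\log x/\Delta)$ you should invoke the short-interval divisor bound $\sum_{n\in I}d(n)\ll |I|\log x$ (Shiu's theorem, valid since $|I|=x/\Delta\ge x^\epsilon$), rather than the pointwise $d(n)\ll n^\epsilon$, which would only yield $O(x^{1+\epsilon}/\Delta)$.
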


Throughout this paper, we will require several estimates for the weights (\ref{tildew}). Observe
\begin{equation}\label{tildewbutforn}
\tilde{w}\Big(\frac{nx}{k^2+\Delta^2}\Big)=\int_0^\infty w(t)J_{k-1}(4\pi \sqrt{nxt})dt. 
\end{equation}
Under our assumption \(x\geq k^2/(8\pi^2)\), the Bessel function \(J_{k-1}(4\pi\sqrt{nxt})\) above is always in its oscillatory regime. This leads to cancellation in (\ref{tildewbutforn}) (as discussed in the introduction). By applying asymptotics for the Bessel function valid in the oscillatory regime, we capture this cancellation in the following lemma.

\begin{lemma}\label{wxbig}
For \(\Delta\geq1\), let \(w=w_\Delta\) be given as in Definition \ref{wdef} and \(\tilde w=\tilde w_\Delta\) as in (\ref{tildew}). Suppose \(x\geq k^2/(8\pi^2)\). Then for \(n\geq1\), 
\begin{equation*}
\tilde w\Big(\frac{nx}{k^2+\Delta^2}\Big)=\sqrt{\frac{2}{\pi}}\int_0^\infty w(t)(16\pi^2 nxt-\kappa^2)^{-1/4}\cos \omega(4\pi \sqrt{nxt})dt\\
+\mathcal O((nx)^{-5/4}).
\end{equation*}
\end{lemma}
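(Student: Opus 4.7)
The plan is to substitute the oscillatory asymptotic of Lemma~\ref{bessellem}\ref{besiv} into the integral representation (\ref{tildewbutforn}) and show that the two non-main contributions can be absorbed in $\mathcal O((nx)^{-5/4})$. First I would check the expansion is applicable: since $\supp w\subset[1,2]$ and $x\geq k^2/(8\pi^2)$, for every $t\in[1,2]$ and $n\geq 1$ the argument $z:=4\pi\sqrt{nxt}$ satisfies $z^2=16\pi^2nxt\geq 2n\kappa^2$, giving the gap $z^2-\kappa^2\gg z^2$ and placing $z$ well above $\kappa+\kappa^{1/3+\epsilon}$. With this, Lemma~\ref{bessellem}\ref{besiv} supplies three contributions to $J_{k-1}(4\pi\sqrt{nxt})$: the $\cos\omega$ term, which integrates directly to the main term stated in the lemma; a secondary $\sin\omega$ term; and a remainder $\mathcal O(z^4/(z^2-\kappa^2)^{13/4})\ll(nxt)^{-5/4}$, which integrates trivially over $[1,2]$ to $\mathcal O((nx)^{-5/4})$.

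The real work is to show the secondary sine contribution
\[I:=\int_0^\infty w(t)(16\pi^2 nxt-\kappa^2)^{-3/4}\Big(\tfrac{1}{8}+\tfrac{5}{24}\tfrac{\kappa^2}{16\pi^2 nxt-\kappa^2}\Big)\sin\omega(4\pi\sqrt{nxt})\,dt\]
satisfies $I\ll(nx)^{-5/4}$. Since the amplitude is only $\ll(nxt)^{-3/4}$, the trivial bound yields $\ll(nx)^{-3/4}$, so a single integration by parts is needed. Setting $\phi(t):=\omega(4\pi\sqrt{nxt})$ and using (\ref{omega'}), (\ref{omega''}), one computes
\[\phi'(t)=\frac{(16\pi^2 nxt-\kappa^2)^{1/2}}{2t}\gg\sqrt{nx},\qquad \phi''(t)=\frac{\kappa^2}{2t^2(16\pi^2 nxt-\kappa^2)^{1/2}}\ll\frac{\kappa^2}{\sqrt{nx}}\]
uniformly on $\supp w$. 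Writing $\sin\phi=-(\phi')^{-1}\frac{d}{dt}\cos\phi$ and integrating by parts (boundary terms vanish by compact support of $w$) produces three pieces from the derivative of $wA/\phi'$. The $w'$-piece is handled using $|w'|\ll\Delta$ on a set of measure $\ll\Delta^{-1}$, giving $\ll(nx)^{-5/4}$. The amplitude derivative satisfies $A'(t)\ll(nx)^{-3/4}$ (the factor $nx$ brought down is absorbed by the extra $(nxt)^{-1}$ from differentiating $(16\pi^2 nxt-\kappa^2)^{-3/4}$), contributing $A'/\phi'\ll(nx)^{-5/4}$. The $\phi''$-piece contributes $A\phi''/(\phi')^2\ll\kappa^2(nx)^{-9/4}\ll(nx)^{-5/4}$, using $\kappa^2\ll nx$ which follows from the hypothesis $x\geq k^2/(8\pi^2)$. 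Collecting these shows $I\ll(nx)^{-5/4}$, completing the proof.

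The main obstacle is the bookkeeping for this integration by parts: one must verify that all three pieces genuinely gain a factor of $(nx)^{-1/2}$ over the trivial amplitude, and in particular the $\phi''/(\phi')^2$ term where $\kappa$ reappears and must be controlled against $nx$. This is exactly where the hypothesis $x\geq k^2/(8\pi^2)$ enters essentially.
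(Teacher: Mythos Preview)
Your approach is essentially identical to the paper's: substitute the oscillatory Bessel asymptotic, keep the $\cos\omega$ term as the main term, bound the remainder trivially, and kill the secondary $\sin\omega$ integral by one integration by parts using $\phi'(t)=\tfrac{1}{2t}(16\pi^2nxt-\kappa^2)^{1/2}$. The paper executes the integration by parts by absorbing $1/\phi'$ directly into the amplitude (writing $-2t(16\pi^2nxt-\kappa^2)^{-5/4}\,d(\cos\omega)$), but this is only cosmetically different from your decomposition into $w'$-, $A'$-, and $\phi''$-pieces.

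One small correction: your formula for $\phi''$ is incomplete. Differentiating $\phi'(t)=\tfrac{1}{2t}(16\pi^2nxt-\kappa^2)^{1/2}$ gives
\[
\phi''(t)=\frac{\kappa^2-8\pi^2 nxt}{2t^2(16\pi^2 nxt-\kappa^2)^{1/2}},
\]
so $|\phi''(t)|\asymp (nx)^{1/2}$ on $[1,2]$, not $\ll \kappa^2/(nx)^{1/2}$ as you wrote. Fortunately this does not damage the argument: with the correct bound one still has
\[
\frac{A\,|\phi''|}{(\phi')^2}\ll (nx)^{-3/4}\cdot\frac{(nx)^{1/2}}{nx}=(nx)^{-5/4},
\]
so the conclusion $I\ll(nx)^{-5/4}$ stands. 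Your remark that the hypothesis $x\geq k^2/(8\pi^2)$ is what forces $\kappa^2\ll nx$ (hence $(16\pi^2nxt-\kappa^2)\asymp nx$) is exactly right and is where the paper also uses it.
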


\begin{proof} We will apply the asymptotic (\ref{bessel3}) of Lemma \ref{bessellem} and integrate by parts. Indeed, since we assume \(x\geq k^2/(8\pi^2)\) it is clear that \(4\pi \sqrt{nxt}\geq \sqrt2 k\) for all \(n\geq1\) and \(t\in\supp w=[1,2]\). Thus (\ref{bessel3}) gives an asymptotic expansion for \(J_{k-1}(4\pi\sqrt{nxt})\) valid in this range of \(x\). Replacing this in (\ref{tildewbutforn}) (and noting \(16\pi^2 nxt\geq 2k^2\implies(16\pi^2nxt-\kappa^2)\asymp nx\)) shows
\begin{multline}\label{weight0}
\tilde w\Big(\frac{nx}{k^2+\Delta^2}\Big)=\sqrt{\frac{2}{\pi}}\int_0^\infty w(t)(16\pi^2 nxt-\kappa^2)^{-1/4}\cos \omega(4\pi \sqrt{nxt})dt\\
+\sqrt{\frac{2}{\pi}}\int_0^\infty w(t)(16\pi^2 nxt-\kappa^2)^{-3/4}\Big(\frac{1}{8}+\frac{5}{24}\frac{\kappa^2}{(16\pi^2 nxt-\kappa^2)}\Big)\sin \omega(4\pi \sqrt{nxt})dt\\
+\mathcal O((nx)^{-5/4}).
\end{multline}

We now integrate by parts in the latter integral above. First note that (\ref{omega'}) implies
\begin{equation}\label{omega'again} \frac{\partial}{\partial t} \omega(4\pi \sqrt{nxt})=\frac{1}{2t}(16\pi^2 nxt-\kappa^2)^{1/2}.
\end{equation}
This shows the second integral in (\ref{weight0}) is
\begin{align*}
&\int_0^\infty w(t)(16\pi^2 nxt-\kappa^2)^{-3/4}\Big(\frac{1}{8}+\frac{5}{24}\frac{\kappa^2}{(16\pi^2 nxt-\kappa^2)}\Big)\sin \omega(4\pi \sqrt{nxt})dt\\
=&-2\int_0^\infty tw(t)(16\pi^2 nxt-\kappa^2)^{-5/4}\Big(\frac{1}{8}+\frac{5}{24}\frac{\kappa^2}{(16\pi^2 nxt-\kappa^2)}\Big) d\big(\cos \omega(4\pi \sqrt{nxt})\big)\\
=&2\int_0^\infty \Big\{\Big(w(t)+tw'(t)-tw(t)\frac{5}{4}\frac{16\pi^2 nx}{(16\pi^2nxt-\kappa^2)}\Big)\Big(\frac{1}{8}+\frac{5}{24}\frac{\kappa^2}{(16\pi^2 nxt-\kappa^2)}\Big)\\
&\hspace{4em}-tw(t)\frac{5}{24}\frac{16\pi^2nx\kappa^2}{(16\pi^2nxt-\kappa^2)^2}\Big\}(16\pi^2nxt-\kappa^2)^{-5/4} \cos \omega(4\pi \sqrt{nxt})dt\ll (nx)^{-5/4}.
\end{align*}
The final bound above follows from the bounds \(w\ll 1\), \(w'\ll \Delta\) and the observation \(\supp w'\subset [1,1+\Delta^{-1}]\cup[2-\Delta^{-1}, 2]\) (which has measure \(2\Delta^{-1}\)) (see Definition \ref{wdef}). The lemma now follows immediately from (\ref{weight0}).
\end{proof}

\subsection{A Smooth Partition of Unity}

We construct a smooth partition of unity, for use later on.
This is standard, and very similar to (for example) \cite[Lemme 2]{fouvry}.

\begin{lemma}[Smooth Partition of Unity]\label{smthpart}
Let \(\alpha\in\mathbb{R}\) and \(L>0\). There exists a sequence of real-valued smooth functions \((b_l^{L,\alpha})_{l\in\mathbb{Z}}\) satisfying the following.
\begin{enumerate}[label=(\roman*)]
    \item \label{unityi} For any \(\xi\), \(\sum_{l\in\mathbb{Z}}b_l^{L, \alpha}(\xi)=1\).
    \item \label{unityii} For any \(l\geq 1\) we have \(\supp b_l^{L,\alpha}=[\alpha+2^{l-1}L,\alpha+2^{l+1}L]\), and for any \(l\leq -1\) we have \(\supp b_l^{L,\alpha}=[\alpha-2^{|l|+1}L,\alpha-2^{|l|-1}L]\). Finally, for \(l=0\) we have \(\supp b_0^{L,\alpha}=[\alpha-2L,\alpha+2L]\).
    \item \label{unityiii} For a non-negative integer \(j\) and any \(l\in\mathbb{Z}\), we have \(\frac{d^j}{d\xi^j}b_l^{L,\alpha}(\xi)\ll_j 2^{-j|l|}L^{-j}\).
\end{enumerate}
\end{lemma}

\begin{proof}
One can construct a smooth transition function \(h:[0,1]\to\mathbb{R}\) satisfying \(h(0)=0\), \(h(1)=1\), and \(\lim_{\xi\to0^+}h^{(j)}(\xi)=\lim_{\xi\to1^-}h^{(j)}(\xi)=0\) for any \(j\geq0\). Equipped with such a \(h\), define
\[b_0^{L,\alpha}(\xi)=\begin{cases} 0&\text{ if } \xi\leq \alpha-2L \\ 1-h\Big(\frac{\alpha-\xi}{L}-1\Big) &\text{ if } \alpha-2L\leq \xi\leq \alpha-L,\\ 1 & \text{ if } \alpha-L\leq \xi\leq \alpha+L,\\ 1-h\Big(\frac{\xi-\alpha}{L}-1\Big) & \text{ if } \alpha+L\leq \xi\leq \alpha+2L, \\ 0 & \text{ if } \alpha+2L\leq \xi.\end{cases}\]
For \(l\geq 1\) we define 
\[b_l^{L,\alpha}(\xi)=\begin{cases} 0 & \text{ if } \xi\leq \alpha +2^{l-1}L,\\ h\Big(\frac{\xi-\alpha}{2^{l-1} L}-1\Big) & \text{ if } \alpha+2^{l-1}L\leq \xi\leq \alpha+2^l L,\\ 1-h\Big(\frac{\xi-\alpha}{2^{l} L}-1\Big) & \text{ if } \alpha+2^l L \leq \xi\leq \alpha +2^{l+1}L, \\ 0 & \text{ if } \alpha+2^{l+1}L\leq \xi.\end{cases}\]
Finally, for \(l\leq -1\) we define
\[b_l^{L,\alpha}(\xi)=b_{-l}^{L,\alpha}(2\alpha-\xi)=\begin{cases} 0 &\text{ if } \xi \leq \alpha -2^{|l|+1}L,\\
1-h\Big(\frac{\alpha-\xi}{2^{|l|}L}-1\Big) &\text{ if } \alpha -2^{|l|+1}L\leq \xi\leq \alpha-2^{|l|}L,\\ h\Big(\frac{\alpha-\xi}{2^{|l|-1}L}-1\Big) &\text{ if } \alpha-2^{|l|}L\leq \xi\leq \alpha-2^{|l|-1}L,\\ 0&\text{ if } \alpha -2^{|l|-1}L\leq\xi .\end{cases}\]
It is easy to check that the properties \ref{unityi} and \ref{unityiii} hold for this choice of \((b_l^{L,\alpha})_{l\in\mathbb{Z}}\).
\end{proof}

\subsection{Equidistribution}

This section is based on \cite[Ch.1]{montgomery10}. 
Suppose we have a real sequence \((u(n))_{n\geq1}\).
We say that \((u(n))\) \emph{equidistributes modulo 1} if for all \(0\leq \alpha\leq\beta\leq 1\) we have
\begin{equation*}
\lim_{N\to\infty} \frac{1}{N} \#\big\{ 1\leq n\leq N: u(n)\in[\alpha,\beta] \Mod 1\big\}=\beta-\alpha.
\end{equation*}
(By \(u(n)\in[\alpha,\beta] \Mod 1\), we mean that the fractional part \(\{u(n)\}=u(n)-\lfloor u(n)\rfloor\in[\alpha,\beta]\).)

To measure the quantitative rate of equidistribution modulo 1, we introduce the \emph{discrepancy} of the sequence \((u(n))\). 
For \(N\geq1\), the discrepancy \(D(u, N)\) is defined by 
\begin{equation}\label{discrepancydef}
D(u, N)\coloneqq \sup_{0\leq \alpha\leq\beta\leq 1}\Big|\#\big\{1\leq n\leq N: u(n)\in[\alpha,\beta]\Mod 1\big\}-N(\beta-\alpha)\Big|.
\end{equation}
Thus \((u(n))\) equidistributes modulo 1 if and only if \(D(u, N)=o(N)\) as \(N\to\infty\). 
We give the following bound for the discrepancy (which is essentially a quantitative version of Weyl's equidistribution theorem).

\begin{lemma}[Erd\H os-Tur\'an Inequality {\cite[Corollary 1.1]{montgomery10}}]\label{erdosturanlemma}
For any real sequence \((u(n))_{n\geq1}\), any \(N\geq1\) and any positive integer \(R\), we have
\begin{equation*}
D(u, N)\leq \frac{N}{R+1} +3\sum_{r\leq R}\frac{1}{r}\Big|\sum_{n\leq N} e(r u(n))\Big|.
\end{equation*}
\end{lemma}

\subsection{Oscillatory Sums and Integrals}

The following lemma is a formulation of van der Corput's method for bounding exponential sums. 

\begin{lemma}[van der Corput {\cite[Theorem 8.20]{iwanieckowalski}}]\label{vdcorput}
Let \(f\) be a real-valued function which is \(p\) times continuously differentiable, with \(p\geq 2\). Let \(b-a\geq1\), and suppose that if \(\xi\in[a,b]\), \(\lambda\leq f^{(p)}(\xi)\leq \mu\lambda\) for some \(\lambda>0\) and \(\mu\geq1\). Then
\[\sum_{a<n\leq b} e(f(n))\ll (b-a) \mu^{2/P} \lambda^{1/(2P-2)} +(b-a)^{1-2/P}\lambda^{-1/(2P-2)},\]
where \(P=2^{p-1}\) and the implied constants are independent of \(p\).
\end{lemma}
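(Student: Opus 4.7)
The plan is to prove this by induction on $p$, bootstrapping from the classical second derivative test (the $p=2$ case) via repeated applications of van der Corput's A-process (Weyl differencing).

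For the base case $p=2$ (so $P=2$), the target reads $\sum_{a<n\leq b}e(f(n))\ll (b-a)\mu\lambda^{1/2}+\lambda^{-1/2}$, which is the classical van der Corput second derivative test. This is standard: one proves it by partitioning $(a,b]$ into maximal sub-intervals on which $f'$ varies by less than $1$ (there are $O(1+(b-a)\mu\lambda)$ such intervals, since $f''\leq\mu\lambda$), then applying partial summation on each piece to reduce to a linear exponential sum, and estimating the latter via the Kuzmin-Landau inequality $\sum_{N<n\leq M}e(\alpha n)\ll \|\alpha\|^{-1}$. Summing $\|\alpha\|^{-1}$ over the partition with a trivial bound on the $O(1)$ pieces where $\|\alpha\|$ is small produces the claimed estimate.

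For the inductive step, assume the inequality holds at derivative order $p$ with some fixed implied constant $C$, and deduce it at $p+1$ (so $P'=2P$, $2/P'=1/P$, and $2P'-2=4P-2=2(2P-1)$). The key tool is the Weyl-van der Corput inequality: for any integer $1\leq H\leq b-a$,
\[\Big|\sum_{a<n\leq b}e(f(n))\Big|^2\ll \frac{b-a}{H}\Big((b-a)+\sum_{1\leq h\leq H}\Big|\sum_{n\in I_h}e(\Delta_h f(n))\Big|\Big),\]
where $\Delta_h f(x)\coloneqq f(x+h)-f(x)$ and $I_h=(a,b-h]$. The mean value theorem gives $(\Delta_h f)^{(p)}(\xi)=h\,f^{(p+1)}(\xi')\in[h\lambda,h\mu\lambda]$, so the inductive hypothesis applies to each inner sum with parameter $h\lambda$ in place of $\lambda$ (the same $\mu$). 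Substituting and using the bounds $\sum_{h\leq H}h^{\pm 1/(2P-2)}\ll H^{1\pm 1/(2P-2)}$ (with constants uniformly bounded in $P\geq 2$) yields
\[|S|^2\ll \frac{(b-a)^2}{H}+(b-a)^2\mu^{2/P}\lambda^{1/(2P-2)}H^{1/(2P-2)}+(b-a)^{2-2/P}\lambda^{-1/(2P-2)}H^{-1/(2P-2)}.\]
Setting $H=\lambda^{-1/(2P-1)}$ (valid precisely in the regime where the target is nontrivial) equalizes the last two terms to $(b-a)^2\mu^{2/P}\lambda^{1/(2P-1)}$ and $(b-a)^{2-2/P}\lambda^{-1/(2P-1)}$ respectively, while the first term $(b-a)^2\lambda^{1/(2P-1)}$ is dominated by the second since $\mu\geq 1$. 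Taking square roots then gives the required bound at $p+1$.

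The principal obstacle is keeping the implied constant independent of $p$, since the induction is iterated $p-1$ times and naive bookkeeping would accumulate geometrically in $p$. I would address this by fixing a single absolute constant $C$ at the outset (large enough for the base case) and verifying that at each inductive step the total multiplicative loss from the Weyl-van der Corput step (a factor $\leq 4$, independent of $p$) and the summation bound (constant $\leq (1-1/(2P-2))^{-1}\leq 2$ for $P\geq 2$) compounds to at most $\sqrt{8C\cdot C_1}\leq C$ provided $C$ was chosen sufficiently large initially; this is the characteristic feature that allows the bound to be stated with an absolute implied constant. Secondary technical points include handling the shortened interval $I_h$ (the boundary loss of length $h$ contributes $O(H)$ per inner sum, absorbed by the diagonal $(b-a)^2/H$ term), and checking that when $H=\lambda^{-1/(2P-1)}$ falls outside $[1,b-a]$ the claimed bound is already trivial (being dominated by $b-a$).
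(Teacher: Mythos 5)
The paper does not prove this lemma; it is quoted verbatim from Iwaniec--Kowalski, Theorem 8.20. Your argument is correct and is essentially the standard proof given there: induction on \(p\) via the Weyl--van der Corput \(A\)-process starting from the second-derivative test, with the choice \(H=\lambda^{-1/(2P-1)}\) and the self-improving constant recursion \(C\mapsto\sqrt{c_0C}\) that keeps the implied constant absolute.
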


Finally, we record two lemmas that will be used later to handle oscillatory integrals. These can both be found in work of Blomer, Khan and Young \cite[§8]{stationaryphase}.

\begin{lemma}[Integration by Parts {\cite[Lemma 8.1]{stationaryphase}}]\label{bkyipb}
Let \(Q,U,R,X>0\) and \(Y\geq 1\) be some parameters. Let \(\rho\) and \(\phi\) be two smooth functions. Assume \(\rho\) is compactly supported on the interval \([\alpha,\beta]\) and satisfies
\[\rho^{(j)}(t)\ll_j XU^{-j} \text{ for } j=0,1,2\ldots\]
Assume \(\phi\) satisfies
\[\phi'(t)\gg R \text{ and } \phi^{(j)}(t)\ll_j YQ^{-j} \text{ for } j=2,3,\ldots\]
Then
\begin{equation}\label{intbound}
\int_\alpha^\beta \rho(t)e^{i\phi(t)}dt\ll_B (\beta-\alpha)X\Big\{\Big(\frac{QR}{Y^{1/2}}\Big)^{-B}+(RU)^{-B}\Big\},
\end{equation}
for any integer \(B\geq 0\).
\end{lemma}

The following is a simplified version of \cite[Proposition 8.2]{stationaryphase}, which gives an asymptotic expansion of oscillatory integrals around a stationary phase. 
We state only an upper bound here.

\begin{lemma}[Stationary Phase {\cite[Proposition 8.2]{stationaryphase}}]\label{bkyexpansion}
Let \(Q,V,X,Y>0\) be some parameters. 
Let \(\rho\) and \(\phi\) be two smooth functions, with \(\rho\) compactly supported on an interval \([\alpha,\beta]\) where \(\beta-\alpha \geq V\). Set \(Z\coloneqq Q+X+Y+(\beta-\alpha)+1\), and suppose that for some fixed \(\epsilon>0\) we have
\begin{equation}\label{bkyexpansioncondition}
Y\geq Z^\epsilon \text{ and } \frac{VY^{1/2}}{Q}\geq Z^\epsilon.
\end{equation}
Assume 
\[\rho^{(j)}(t)\ll_j XV^{-j} \text{ for } j=0,1,2,\ldots\]
Additionally, suppose there exists a unique point \(t_0\in[\alpha,\beta]\) satisfying \(\phi'(t_0)=0\). Assume further that
\[\phi''(t)\asymp YQ^{-2} \text{ and } \phi^{(j)}(t)\ll_j YQ^{-j} \text{ for } j=1,2,3,\ldots\]
Then for any integer \(B\geq0\), 
\begin{equation}\label{intbound2}
\int_\alpha^\beta \rho(t) e^{i\phi(t)}dt \ll_B \frac{QX}{Y^{1/2}}+Z^{-B}.
\end{equation}
\end{lemma}

\section{Bounds for Sums of Hecke Eigenvalues: Proof of Theorem \ref{thmbound}}\label{secbound}

To prove Theorem \ref{thmbound}, we will apply Lemma \ref{vorprop} to transform the sums (shortening their effective length). We first give the following bound for the weights appearing in the transformed sums, from which the theorem will follow straightforwardly.

\begin{lemma}\label{wgood}
For \(\Delta\geq1\), let \(w=w_\Delta\) be given as in Definition \ref{wdef} and \(\tilde w=\tilde w_\Delta\) as in (\ref{tildew}). Suppose \(x\geq k^2/(8\pi^2)\). Then for \(n\geq1\),
\begin{multline}\label{w1}
\tilde w\Big(\frac{nx}{k^2+\Delta^2}\Big)\\
=\frac{2\sqrt{2}}{\sqrt{\pi}}\int_0^\infty \Big\{\frac{12\pi^2nx}{(16\pi^2nxt-\kappa^2)}tw(t)-w(t)-tw'(t)\Big\}(16\pi^2nxt-\kappa^2)^{-3/4}\sin \omega(4\pi \sqrt{nxt})dt\\
+\mathcal O\big((nx)^{-5/4}\big).
\end{multline}
In particular, we have the bound
\begin{equation}\label{w2}
\tilde w\Big(\frac{nx}{k^2+\Delta^2}\Big)\ll (nx)^{-3/4}.
\end{equation}
\end{lemma}

\begin{proof} 
Integrating the expression given in Lemma \ref{wxbig} by parts (using (\ref{omega'again})), we have
\begin{multline*}
\tilde w\Big(\frac{nx}{k^2+\Delta^2}\Big)=\sqrt{\frac2\pi}\int_0^\infty w(t)(16\pi^2 nxt-\kappa^2)^{-1/4}\cos \omega(4\pi \sqrt{nxt})dt+\mathcal O((nx)^{-5/4})\\
=\frac{2\sqrt2}{\sqrt\pi}\int_0^\infty tw(t)(16\pi^2nxt-\kappa^2)^{-3/4}d\big(\sin \omega(4\pi \sqrt{nxt})\big)+\mathcal O((nx)^{-5/4})\\
=-\frac{2\sqrt2}{\sqrt\pi}\int_0^\infty \Big\{w(t)+tw'(t)-tw(t)\frac{3}{4}\frac{16\pi^2nx}{(16\pi^2nxt-\kappa^2)}\Big\}(16\pi^2nxt-\kappa^2)^{-3/4}\sin \omega(4\pi \sqrt{nxt})dt\\
+\mathcal O((nx)^{-5/4}),
\end{multline*}
proving (\ref{w1}). To deduce (\ref{w2}) from (\ref{w1}), one uses the bounds \(w\ll 1\), \(w'\ll \Delta\) and notes that \(w'\) is supported on a set of measure \(\leq 2/\Delta\).
\end{proof}

Theorem \ref{thmbound} is now a simple consequence of Lemma \ref{vorprop} and the bound (\ref{w2}) of Lemma \ref{wgood}.

\begin{proof}[Proof of Theorem \ref{thmbound}]
Set \(\Delta=x^{2/3}\) throughout this proof. We apply Lemma \ref{vorprop} with this choice of \(\Delta\), which shows
\begin{multline}\label{propproofsplit}
\mathcal S(x,f)=2\pi (-1)^{k/2} x\sum_{n\geq1} \lambda_f(n)\tilde w\Big(\frac{nx}{k^2+\Delta^2}\Big)+\mathcal O \Big(\frac{x\log x}{\Delta}\Big)\\
\ll x\sum_{n\geq 1} d(n)\Big|\tilde w\Big(\frac{nx}{k^2+\Delta^2}\Big)\Big|+x^{1/3}\log x.
\end{multline}
In the last line, we used Deligne's bound \(|\lambda_f(n)|\leq d(n)\). 

If \(n\geq x^{1/3+\epsilon}\), then since \(x^{1/3+\epsilon}= \Delta^2/x^{1-\epsilon}\geq (k^2+\Delta^2)/(2x^{1-\epsilon})\) (our assumption \(x\geq k^2/(8\pi^2)\) implies \(\Delta^2\geq k^2\)), (\ref{tildewibped}) of Lemma \ref{vorprop} gives the bound \(\tilde w(nx/(k^2+\Delta^2))\ll n^{-2}x^{-1000}\), say. If \(n\leq x^{1/3+\epsilon}\), we use the bound \(\tilde w(nx/(k^2+\Delta^2))\ll (nx)^{-3/4}\) of Lemma \ref{wgood}. From (\ref{propproofsplit}), we conclude the bound
\begin{equation*}
\mathcal S(x,f)\ll x^{1/4}\sum_{n\leq x^{1/3+\epsilon}}\frac{d(n)}{n^{3/4}}+x^{-999}\sum_{n\geq x^{1/3+\epsilon}}\frac{d(n)}{n^2}+x^{1/3}\log x\ll x^{1/3+\epsilon}.
\end{equation*}
\end{proof}

\section{The First Moment: Proof of Theorem \ref{thmmean}}\label{secmean}

To compute the first moment \(\langle \mathcal S(x,f)\rangle\), our strategy is to transform the sums using Lemma \ref{vorprop} and then apply the Petersson trace formula. We first need a more explicit form of the smoothings \(\tilde w\) appearing in the transformed sums, given in the following lemma.

\begin{lemma}\label{tildewibp}
For \(\Delta\geq1\), let \(w=w_\Delta\) be given as in Definition \ref{wdef} and \(\tilde w=\tilde w_\Delta\) as in (\ref{tildew}). Suppose \(x\geq k^2/(8\pi^2)\). Then for \(n\geq1\), 
\[\tilde w_\Delta \Big(\frac{nx}{k^2+\Delta^2}\Big)=\frac{2\sqrt{2}}{\sqrt{\pi}}\Omega(n,x)(nx)^{-3/4}+\mathcal O((nx)^{-5/4})+\mathcal O((nx)^{-1/4}\Delta^{-1}),\]
where \(\Omega(n,x)\) is given by (\ref{bigomegadef}). 
\end{lemma}

\begin{proof}
We use the expression given in Lemma \ref{wxbig}, and first unsmooth the integral there. From Definition \ref{wdef} and Lemma \ref{wxbig} we obtain
\begin{multline}\label{weights00}
\tilde w\Big(\frac{nx}{k^2+\Delta^2}\Big)=\sqrt{\frac2\pi} \int_1^2(16\pi^2nxt-\kappa^2)^{-1/4}\cos\omega(4\pi\sqrt{nxt})dt\\
+\mathcal O\Big(\Big\{\int_1^{1+\Delta^{-1}}+\int_{2-\Delta^{-1}}^2\Big\}(16\pi^2 nxt-\kappa^2)^{-1/4}dt\Big)+\mathcal O((nx)^{-5/4}).
\end{multline}
Since we assume \(x\geq k^2/(8\pi^2)\) (which implies \((16\pi^2nxt-\kappa^2)\asymp nx\)), the first error term here is \(\mathcal O((nx)^{-1/4}\Delta^{-1})\). Integrating the main term by parts (recall (\ref{omega'again})) we obtain
\begin{multline*}
\int_1^2(16\pi^2nxt-\kappa^2)^{-1/4}\cos\omega(4\pi\sqrt{nxt})dt=2\int_1^2 t(16\pi^2nxt-\kappa^2)^{-3/4}d(\sin\omega(4\pi\sqrt{nxt}))\\
=2\Omega(n,x)(nx)^{-3/4}-2\int_1^2\Big(1-\frac34\frac{16\pi^2nxt}{16\pi^2nxt-\kappa^2}\Big)(16\pi^2nxt-\kappa^2)^{-3/4}\sin\omega(4\pi\sqrt{nxt})dt.
\end{multline*}
Integrating by parts once again, the remaining integral is easily shown to be \(\mathcal O((nx)^{-5/4})\). Replacing this expression in (\ref{weights00}), the lemma is proved. 
\end{proof}

Equipped with this expression for the weights, Theorem \ref{thmmean} now follows straightforwardly from the Vorono\"i formula (Lemma \ref{vorprop}) and the Petersson trace formula (Lemma \ref{trace}). 

\begin{proof}[Proof of Theorem \ref{thmmean}]
Throughout this proof, we fix \(\epsilon>0\) and set \(\Delta=x^{1/2}k^{1-\epsilon}\). 
We first wish to apply Lemma \ref{vorprop} with this choice of \(\Delta\) (applicable since \(k^4\geq x\geq k^2/(8\pi^2)\implies \Delta \leq x^{1-\epsilon/5}\)). 
Since for \(n\geq (k^2+\Delta^2)k^{\epsilon}/x\), (\ref{tildewibped}) provides the bound \(\tilde w(nx/(k^2+\Delta^2))\ll n^{-2}k^{-1100}\), Lemma \ref{vorprop} (with \(\Delta=x^{1/2}k^{1-\epsilon }\)) shows that for \(f\in\mathcal B_k\), 
\begin{multline*}
\mathcal S(x,f)=2\pi(-1)^{k/2}x\sum_{n\geq1}\lambda_f(n)\tilde w\Big(\frac{nx}{k^2+\Delta^2}\Big)+\mathcal O \Big(\frac{x\log x}{\Delta}\Big)\\
=2\pi(-1)^{k/2}x\sum_{n\leq (k^2+\Delta^2)k^{\epsilon }/x}\lambda_f(n)\tilde w\Big(\frac{nx}{k^2+\Delta^2}\Big)+\mathcal O(k^{-1000})+\mathcal O(x^{1/2}k^{-1+\epsilon }\log x).
\end{multline*}
Since \((k^2+\Delta^2)k^{\epsilon }/x\leq k^2/10^4\), we may apply part \ref{traceii} of Lemma \ref{trace} to compute
\begin{multline*}
\langle \mathcal S(x,f)\rangle =2\pi(-1)^{k/2} x\sum_{n\leq (k^2+\Delta^2)k^{\epsilon }/x}\langle \lambda_f(n)\lambda_f(1)\rangle \tilde w\Big(\frac{nx}{k^2+\Delta^2}\Big)+\mathcal O(x^{1/2}k^{-1+2\epsilon })\\
=2\pi(-1)^{k/2} x\tilde w\Big(\frac{x}{k^2+\Delta^2}\Big)+\mathcal O\Big(x\sum_{n\leq (k^2+\Delta^2)k^{\epsilon }/x}\Big|\tilde w\Big(\frac{nx}{k^2+\Delta^2}\Big)\Big|\cdot e^{-k}\Big)+\mathcal O(x^{1/2}k^{-1+2\epsilon}).
\end{multline*}
It is clear (from the bound (\ref{w2}), say) that the first error term is \(\mathcal O(e^{-k/2})\). Now applying Lemma \ref{tildewibp}, we conclude 
\begin{equation*}
\langle \mathcal S(x,f)\rangle =4\sqrt{2\pi}(-1)^{k/2}\Omega(1,x)x^{1/4}+\mathcal O(x^{-1/4})+\mathcal O(x^{1/4}k^{-1+\epsilon })+\mathcal O(x^{1/2}k^{-1+2\epsilon}).
\end{equation*}
This completes the proof (upon replacing \(\epsilon\) by \(\epsilon/2\)).
\end{proof}

\section{The Second Moment: Proof of Theorem \ref{thmvar}}\label{secvar}

In this section, we compute the second moment \(\langle \mathcal S(x,f)^2\rangle\) in the range \(k^2/(8\pi^2)\leq x\leq k^{12/5}\). We begin with the following lemma, which applies the Petersson trace formula to naturally split the second moment into diagonal and off-diagonal terms. 

\begin{lemma}\label{varsplit}
Let \(\epsilon>0\). Let \(\Delta\) be a sufficiently large parameter satisfying \(x^{1/2}\leq \Delta\leq x^{1-\epsilon}\). Let \(w=w_\Delta\) and \(\tilde w=\tilde w_\Delta\) be the corresponding smoothings, given in Definition \ref{wdef} and (\ref{tildew}) respectively. Suppose \(k^2/(8\pi^2)\leq x\leq k^4\). Then we have
\begin{equation*}
\langle \mathcal S(x,f)^2\rangle = \sigma^2+\mathcal O \Big(\frac{\sigma x\log x}{\Delta}\Big)+\mathcal O \Big(\frac{x^2\log^2 x}{\Delta^2}\Big), 
\end{equation*}
where
\begin{equation}\label{splits}
\sigma^2=(\mathrm{D})+(\mathrm{OD}).
\end{equation}
Here \((\mathrm{D})\) denotes the diagonal terms
\begin{equation}\label{diag}
(\mathrm{D})=4\pi^2 x^2 \sum_{n\leq \Delta^2 k^\epsilon/x} \tilde w\Big(\frac{nx}{k^2+\Delta^2}\Big)^2.
\end{equation}
The off-diagonal terms are given by 
\begin{equation}\label{offdiag}
(\mathrm{OD})=8\pi^3(-1)^{k/2} x^2 \sum_{m,n\leq \Delta^2k^\epsilon/x} \tilde w\Big(\frac{nx}{k^2+\Delta^2}\Big)\tilde w\Big(\frac{mx}{k^2+\Delta^2}\Big) \sum_{c\geq1} c^{-1}S(m,n;c)J_{k-1}\Big(\frac{4\pi \sqrt{mn}}{c}\Big).
\end{equation}
\end{lemma}

\begin{proof}
Recall Lemma \ref{vorprop}, which shows that for \(f\in\mathcal B_k\), 
\begin{equation}\label{vorpropped}
\mathcal S(x,f)=2\pi (-1)^{k/2}x\sum_{n\geq1} \lambda_f(n)\tilde w\Big(\frac{nx}{k^2+\Delta^2}\Big)+\mathcal O \Big(\frac{x\log x}{\Delta}\Big).
\end{equation}
We claim we may truncate the sum over \(n\) in (\ref{vorpropped}) at \(\Delta^2 k^\epsilon/x\). Indeed, since we assume \(\Delta^2 \geq x\geq k^2/(8\pi^2)\), we have 
\[n\geq \frac{\Delta^2 k^\epsilon}{x}\implies n\geq \frac{(k^2+\Delta^2)k^\epsilon}{16\pi^2 x}\implies \tilde w\Big(\frac{nx}{k^2+\Delta^2}\Big)\ll n^{-2}k^{-1100},\]
say, by the bound (\ref{tildewibped}) of Lemma \ref{vorprop}.
It follows from (\ref{vorpropped}) that 
\begin{equation*}
\mathcal S(x,f)=2\pi (-1)^{k/2}x\sum_{n\leq \Delta^2 k^\epsilon/x} \lambda_f(n)\tilde w\Big(\frac{nx}{k^2+\Delta^2}\Big)+\mathcal O \Big(\frac{x\log x}{\Delta}\Big).
\end{equation*}

This allows us to compute
\begin{multline}\label{var1}
\langle \mathcal S(x,f)^2\rangle = 4\pi^2 x^2 \sum_{m,n\leq \Delta^2k^\epsilon/x} \langle \lambda_f(n)\lambda_f(m)\rangle \tilde w\Big(\frac{nx}{k^2+\Delta^2}\Big)\tilde w\Big(\frac{mx}{k^2+\Delta^2}\Big)\\
+\mathcal O\Big(\Big\langle\Big|x\sum_{n\leq \Delta^2k^\epsilon/x} \lambda_f(n) \tilde w\Big(\frac{nx}{k^2+\Delta^2}\Big)\Big|\cdot \frac{x\log x}{\Delta}\Big\rangle\Big)+\mathcal O \Big(\frac{x^2\log^2 x}{\Delta^2}\Big).
\end{multline}
Set
\begin{equation}\label{vardef}
\sigma^2=4\pi^2 x^2 \sum_{m,n\leq \Delta^2 k^\epsilon/x} \langle \lambda_f(n)\lambda_f(m)\rangle \tilde w\Big(\frac{nx}{k^2+\Delta^2}\Big)\tilde w\Big(\frac{mx}{k^2+\Delta^2}\Big).
\end{equation}
Then
\begin{equation*}
\langle \mathcal S(x,f)^2\rangle = \sigma^2+\mathcal O \Big(\frac{\sigma x\log x}{\Delta}\Big)+\mathcal O \Big(\frac{x^2\log^2 x}{\Delta^2}\Big), 
\end{equation*}
as claimed (we have obtained the first error term here by applying the Cauchy-Schwarz inequality to the first error term of (\ref{var1})). The explicit expression (\ref{splits}) for \(\sigma^2\) follows immediately, upon applying the Petersson trace formula (part \ref{tracei} of Lemma \ref{trace}) to (\ref{vardef}).
\end{proof}

The diagonal terms \((\mathrm{D})\) given in (\ref{diag}) will contribute the main term in Theorem \ref{thmvar}; the off-diagonal terms \((\mathrm{OD})\) given in (\ref{offdiag}) will contribute only an error term. In Section \ref{subsectdiag}, we will evaluate the diagonal contribution, and prove the following.

\begin{restatable}[Evaluation of \((\mathrm D)\)]{lemma}{diaglem}\label{diaglem}
Let \(\epsilon>0\), and assume \(k^2/(8\pi^2)\leq x\leq k^4\) and \(x^{1/2}\leq \Delta\leq x^{1-\epsilon}\). Then the following hold.
\begin{enumerate}[label=(\roman*)]
\item \label{diagi} 
We have 
\begin{equation*}
(\mathrm{D})
=
32\pi x^{1/2}\sum_{n\geq1}\frac{\Omega(n,x)^2}{n^{3/2}}
+
\mathcal O(xk^\epsilon\Delta^{-1}).
\end{equation*}
\item \label{diagii} 
Moreover, the main term above satisfies
\begin{equation*}
x^{1/2}\exp\Big(-\frac{\log x}{\log\log x}\Big) 
\ll
32\pi x^{1/2}\sum_{n\geq1}\frac{\Omega(n,x)^2}{n^{3/2}} 
\ll 
x^{1/2}.
\end{equation*}
\end{enumerate}
\end{restatable}

Bounding the off-diagonal contribution (\ref{offdiag}) is the most difficult part of the proof of Theorem \ref{thmvar}. In Section \ref{subsecoffdiag}, we obtain the following.

\begin{restatable}[Bound for \((\mathrm{OD})\)]{lemma}{offdiaglem}\label{offdiaglem}
Let \(0<\epsilon<1/1000\), and assume \(x^{1/2}\leq \Delta\leq x^{2/3}k^{1/3-\epsilon}\). Provided \(k^2/(8\pi^2)\leq x\leq k^4\), we have
\begin{equation*}
(\mathrm{OD})\ll k^{-8/3}\Delta^2+k^{-3/2+\epsilon}\Delta^{3/2}+k^{-1/3+\epsilon}\Delta^{1/2}+x^{-1/2} k^{1/6+2\epsilon} \Delta+x^{-3/2}k^{-5/6+4\epsilon}\Delta^3.
\end{equation*}
\end{restatable}

\begin{proof}[Proof of Theorem \ref{thmvar}, assuming Lemma \ref{diaglem} and Lemma \ref{offdiaglem}]
We will apply Lemma \ref{offdiaglem} with \(\Delta=x^{1/2}k^{3/5}\) to bound the off-diagonal contribution. 
One easily checks that our assumption \(x\geq k^2/(8\pi^2)\) (and the fact \(\epsilon<1/1000\)) implies that this choice of \(\Delta\) satisfies \(\Delta=x^{1/2}k^{3/5}\leq x^{2/3}k^{1/3-\epsilon}\).
Lemma \ref{offdiaglem} therefore shows
\begin{equation*}
(\mathrm{OD})\ll x k^{-22/15}+ x^{3/4}k^{-3/5+\epsilon}+x^{1/4}k^{-1/30+\epsilon}+k^{23/30+2\epsilon}+k^{29/30+4\epsilon}.
\end{equation*}
Our assumption \(k^2/(8\pi^2)\leq x\leq k^{12/5}\) implies that the first and third term above are dominated by the second term. 
Hence
\begin{equation*}
(\mathrm{OD})\ll x^{3/4}k^{-3/5+\epsilon}+k^{29/30+4\epsilon}.
\end{equation*}

Observe from (\ref{splits}) and Lemma \ref{diaglem} (applied with our choice of \(\Delta=x^{1/2}k^{3/5}\)) that
\begin{equation}\label{sigmarevisited}
\sigma^2=(\mathrm{D})+(\mathrm{OD})=32\pi x^{1/2}\sum_{n\geq1}\frac{\Omega(n,x)^2}{n^{3/2}}+\mathcal O(x^{3/4}k^{-3/5+\epsilon})+\mathcal O(k^{29/30+4\epsilon}).
\end{equation}
Using the bound for the above main term given in part \ref{diagii} of Lemma \ref{diaglem}, and the assumption \(x\leq k^{12/5}\), we find \(\sigma\ll x^{1/4}k^{\epsilon/2}\).
So from (\ref{sigmarevisited}) and Lemma \ref{varsplit}, we have for \(k^2/(8\pi^2)\leq x\leq k^{12/5}\)
\begin{multline*} 
\langle \mathcal S(x,f)^2\rangle=\sigma^2+\mathcal O\Big(\frac{\sigma x\log x}{\Delta}\Big)+\mathcal O\Big(\frac{x^2\log^2x}{\Delta^2}\Big)\\
=32\pi x^{1/2}\sum_{n\geq1}\frac{\Omega(n,x)^2}{n^{3/2}}+\mathcal O(x^{3/4}k^{-3/5+\epsilon})+\mathcal O(k^{29/30+4\epsilon}).
\end{multline*}
(We used that \(\mathcal O(x^2\log ^2x/\Delta^2)=\mathcal O(x^{3/4}k^{-3/5+\epsilon})\), since \(x\leq k^{12/5}\).) This completes the proof of Theorem \ref{thmvar} (upon replacing \(\epsilon\) by \(\epsilon/4\)).
\end{proof}

\subsection{Sketch of Argument}

\subsubsection{The Diagonal Terms: Sketch of Lemma \ref{diaglem}}
Part \ref{diagi} of Lemma \ref{diaglem} states
(for \(\Omega(n,x)\) given by (\ref{bigomegadef})) 
\begin{equation*}
(\mathrm{D})
=
32\pi x^{1/2}\sum_{n\geq1}\frac{\Omega(n,x)^2}{n^{3/2}}
+
\mathcal O(xk^\epsilon\Delta^{-1}).
\end{equation*}
We obtain this as a simple consequence of Lemma \ref{tildewibp}, which gives an explicit expression for the smoothings \(\tilde w\) appearing in (\ref{diag}). 
It is reasonable to guess that main term above should be of size \(\asymp x^{1/2}\). 
Indeed, for \(x\geq k^2/(8\pi^2)\) we have the bound \(\Omega(n,x)\ll 1\), and so it is immediate that in this range
\begin{equation}\label{maintermupperbound} 
32\pi x^{1/2}\sum_{n\geq1}\frac{\Omega(n,x)^2}{n^{3/2}} 
\ll x^{1/2}.
\end{equation}
To prove the corresponding lower bound \(x^{1/2}\sum_{n}\Omega(n,x)^2/n^{3/2}\gg x^{1/2}\), it would of course suffice to show that for at least one \(n\leq 1000\), say, we have \(|\Omega(n,x)|\geq 1/1000\). Unfortunately, due to the presence of the oscillatory terms \(\sin\omega(4\pi\sqrt{nx})\) and \(\sin\omega(4\pi\sqrt{2nx})\) in the definition (\ref{bigomegadef}) of \(\Omega(n,x)\), it is possible that no such condition holds for some perverse choice of \(x\).

The most difficult part of the proof of Lemma \ref{diaglem} is establishing the lower bound for the main term, \(x^{1/2}\sum_{n}\Omega(n,x)^2/n^{3/2}\gg x^{1/2}\exp(-\log x/\log\log x)\), given in part \ref{diagii}. It is certainly reasonable to expect that for at least some \(n\), we have \(\Omega(n,x)\gg1\). 
Indeed, define the sequence \((h(n))_{n\geq1}\) by 
\begin{equation}\label{hdef}
h(n)\coloneqq \frac{\omega(4\pi \sqrt{2nx})}{2\pi},
\end{equation}
and for \(N\geq 4\) set 
\begin{equation}\label{zndef}
Z(N)\coloneqq \left\{4\leq n\leq N : h(n)\in \left[\frac{9}{40},\frac{11}{40}\right] \Mod 1\right\}.
\end{equation}
Then it is not hard to see that (provided \(x\geq k^2/(8\pi^2)\)),
\begin{equation}\label{znimplication}
n\in Z(N)\implies \Omega(n,x)\gg1.
\end{equation}

To check (\ref{znimplication}), observe
\begin{equation*}
h(n)\in \left[\frac{9}{40},\frac{11}{40}\right]\Mod 1 
\implies
 \sin\omega(4\pi\sqrt{2nx})\geq \sin\Big(\frac{9\pi}{20}\Big)\geq c,
\end{equation*}
where (for convenience later on) we define the constant 
\[c\coloneqq 2^{1/2}\cdot (7/4)^{-3/4}\cdot \frac{101}{100}= 0.938\ldots\]
(\(\sin(9\pi/20)=0.987\ldots\) for comparison). Consequently, if \(n\in Z(N)\) then
\begin{multline*}
|\Omega(n,x)|
=
|2(32\pi^2 -\kappa^2/(nx))^{-3/4}\sin\omega(4\pi\sqrt{2nx})-(16\pi^2 -\kappa^2/(nx))^{-3/4}\sin\omega(4\pi\sqrt{nx})|\\
\geq 
2c(32\pi^2 -\kappa^2/(nx))^{-3/4}-(16\pi^2 -\kappa^2/(nx))^{-3/4}.
\end{multline*}
Note that \(n\in Z(N)\implies n\geq4\implies  2-1/n\geq 7/4\). 
Using this, and the assumption \(x\geq k^2/(8\pi^2)\), it follows that \(n\in Z(N)\) implies
\begin{equation*}
|\Omega(n,x)|
\geq
2c(32\pi^2)^{-3/4}-(8\pi^2)^{-3/4}(7/4)^{-3/4}
\geq 
(8\pi^2)^{-3/4} (7/4)^{-3/4}(101/100-1)
\gg 1.
\end{equation*}
So (\ref{znimplication}) is proved.

Our strategy to prove the lower bound in part \ref{diagii} of Lemma \ref{diaglem} is to show that the sequence \((h(n))_{n\geq1}\) equidistributes modulo 1. 
That ensures that \(Z(N)\) contains `many' integers.
So (see (\ref{znimplication})), many \(n\) satisfy \(\Omega(n,x)\gg 1\).
It is these \(n\) which will give a large contribution to the main term \(x^{1/2}\sum_{n}\Omega(n,x)^2/n^{3/2}\), leading to the desired lower bound.

\subsubsection{The Off-diagonal Terms: Sketch of Lemma \ref{offdiaglem}}

The basic idea behind the proof is to reorder summation in the definition (\ref{offdiag}) and apply Poisson summation in the \(n\) variable. Very roughly speaking, after reordering summation we have
\begin{equation*}
(\mathrm{OD})\approx x^{2} \sum_{c\leq \Delta^2/(kx)}c^{-1}\sum_{m\leq \Delta^2/x}\tilde w\Big(\frac{mx}{k^2+\Delta^2}\Big) \sum_{n\leq \Delta^2/x} S(m,n;c)\tilde w\Big(\frac{nx}{k^2+\Delta^2}\Big)J_{k-1}\Big(\frac{4\pi\sqrt{mn}}{c}\Big).
\end{equation*}
(The effective range of \(c\) is a consequence of the effective support of the Bessel function \(J_{k-1}(4\pi\sqrt{mn}/c)\).) Opening the Kloosterman sum (writing \(a^*\) for the multiplicative inverse of \(a\) modulo \(c\)), we can rewrite the above as
\begin{align*}
&\approx x^{2} \sum_{c\leq \Delta^2/(kx)}c^{-1}\sum_{\substack{1\leq a\leq c\\(a,c)=1}}\sum_{m\leq \Delta^2/x} e\Big(\frac{a^*m}{c}\Big)\tilde w\Big(\frac{mx}{k^2+\Delta^2}\Big)\\
&\hspace{12em}\sum_{n\leq \Delta^2/x} e\Big(\frac{an}{c}\Big)\tilde w\Big(\frac{nx}{k^2+\Delta^2}\Big)J_{k-1}\Big(\frac{4\pi\sqrt{mn}}{c}\Big).
\end{align*}
We will use
\begin{equation}\label{terribleapproximations}
\tilde w\Big(\frac{mx}{k^2+\Delta^2}\Big)\ll (mx)^{-3/4} \text{ and } \tilde w\Big(\frac{nx}{k^2+\Delta^2}\Big)\approx (nx)^{-3/4} \sin\varphi(n),
\end{equation}
where \(\varphi\) is some complicated phase function depending on \(x\). The first bound is (\ref{w2}) of Lemma \ref{wgood}, and the latter is a very rough approximation of (\ref{w1}) (which suffices for this sketch). Applying the triangle inequality and (\ref{terribleapproximations}), it suffices to bound a sum of shape
\begin{equation*}
\approx x^{1/2}\sum_{c\leq \Delta^2/(kx)}c^{-1}\sum_{m\leq \Delta^2/x} m^{-3/4} \sum_{\substack{1\leq a\leq c\\(a,c)=1}} \Big|\sum_{n\leq \Delta^2/x} e\Big(\frac{an}{c}\Big)n^{-3/4} \sin\varphi(n) J_{k-1}\Big(\frac{4\pi\sqrt{mn}}{c}\Big)\Big|.
\end{equation*}

The difficult part of the proof is bounding the inner sum over \(n\). One would expect to find cancellation in this sum, coming from the oscillations of the exponential terms, and this is captured by applying Poisson summation. Indeed, this shows that the inner sum over \(n\) is roughly equal to the dual sum
\[\sum_{\substack{\tilde n\in\mathbb{Z}\\\tilde n\equiv -a \Mod c}} \int_0^{\Delta^2/x} t^{-3/4}\sin\varphi(t)J_{k-1}\Big(\frac{4\pi\sqrt{mt}}{c}\Big)e\Big(-\frac{\tilde nt}{c}\Big)dt.\]
Integrating by parts, one finds that this dual sum is effectively of length \(\approx \sqrt{mx}/k (\ll \Delta/k^{1-\epsilon})\). This is shorter than the original sum, and therefore we find some savings.

It remains to estimate the oscillatory integrals appearing in the dual sum, which requires some careful analysis of the Bessel function \(J_{k-1}(4\pi\sqrt{mt}/c)\). Handling the integrals using the method of stationary phase, we are able to show that the contribution of the off-diagonal terms is asymptotically smaller than that of the diagonal main term in the range \(x\leq k^{12/5-\epsilon}\).

Our proof is similar to work of Hough \cite{hough}, in which a twisted second moment estimate (also in weight aspect) is established for central values of \(L(s,f)\). After approximating the central values by Dirichlet polynomials (with coefficients \(\lambda_f(n)\)) and averaging with the Petersson trace formula, the off-diagonal contribution arising is handled in the same way.

\subsection{The Diagonal Terms: Proof of Lemma \ref{diaglem}}\label{subsectdiag}

In this section we investigate the diagonal contribution, which gives the main term in Theorem \ref{thmvar}. Recall the definition (\ref{diag}), which states
\begin{equation*}
(\mathrm{D})=4\pi^2 x^2 \sum_{n\leq \Delta^2 k^\epsilon/x} \tilde w\Big(\frac{nx}{k^2+\Delta^2}\Big)^2.
\end{equation*}
Our goal is the following. 
\diaglem*

\begin{proof}[Proof of Lemma \ref{diaglem}, part \ref{diagi}.]
The explicit expression for the diagonal terms is a simple consequence of Lemma \ref{tildewibp}, which states
\begin{equation*}
\tilde w\Big(\frac{nx}{k^2+\Delta^2}\Big)=\frac{2\sqrt{2}}{\sqrt{\pi}}\Omega(n,x)(nx)^{-3/4}+\mathcal O ((nx)^{-5/4})+\mathcal O((nx)^{-1/4}\Delta^{-1}).
\end{equation*}
From the definition (\ref{diag}) and the bound \(\Omega(n,x)\ll 1\) we compute
\begin{align*}
(\mathrm{D})&=4\pi^2x^2\sum_{n\leq \Delta^2k^\epsilon/x}\Big(\frac{2\sqrt{2}}{\sqrt\pi}\Omega(n,x)(nx)^{-3/4} +\mathcal O((nx)^{-5/4})+\mathcal O((nx)^{-1/4}\Delta^{-1})\Big)^2\\
&=32\pi x^{1/2}\sum_{n\leq \Delta^2k^\epsilon/x}\frac{\Omega(n,x)^2}{n^{3/2}}+\mathcal O\Big(x^2\sum_{n\leq \Delta^2k^\epsilon/x} (nx)^{-1}\Delta^{-1}\Big)\\
&\hspace{15em}+\mathcal O\Big(x^2\sum_{n\leq \Delta^2k^\epsilon/x}(nx)^{-1/2}\Delta^{-2}\Big)+\mathcal O(1)\\
&=32\pi x^{1/2}\sum_{n\geq1}\frac{\Omega(n,x)^2}{n^{3/2}}+\mathcal O\Big(x^{1/2}\sum_{n\geq \Delta^2k^\epsilon/x}n^{-3/2}\Big)+\mathcal O\Big(\frac{x\log\Delta}{\Delta}\Big)+\mathcal O \Big(\frac{x k^{\epsilon/2}}{\Delta}\Big)+\mathcal O(1)\\
&=32\pi x^{1/2}\sum_{n\geq1}\frac{\Omega(n,x)^2}{n^{3/2}}+\mathcal O \Big(\frac{x k^{\epsilon}}{\Delta}\Big),
\end{align*}
as claimed.
\end{proof}

The proof of part \ref{diagii} of Lemma \ref{diaglem} is more involved.
The key input is the following bound for the discrepancy of the sequence \((h(n))_{n\geq1}\). 
The sequence is given by \(h(n)=\omega(4\pi \sqrt{2nx})/(2\pi)\) (see (\ref{hdef})), and the discrepancy \(D(h,N)\) is defined for \(N\geq1\) by (\ref{discrepancydef}).

\begin{lemma}\label{hdiscrepancylem}
Let \(N\geq1\). Then for any integers \(R\geq1\) and \(p\geq2\), we have 
\begin{equation*}
D(h,N)
\ll
N\big( R^{-1}+2^p x^{\frac{1}{2(2P-2)}}N^{-\frac{2p-1}{2(2P-2)}}R^{\frac{1}{2P-2}} + x^{-\frac{1}{2(2P-2)}}N^{\frac{2p-1}{2(2P-2)}-\frac{2}{P}}\log R+pN^{-1}\log R\big), 
\end{equation*}
where \(P=2^{p-1}\).
\end{lemma}

We now prove the final part of Lemma \ref{diaglem}, then return to prove Lemma \ref{hdiscrepancylem} at the end of this section.

\begin{proof}[Proof of Lemma \ref{diaglem}, part \ref{diagii}, assuming Lemma \ref{hdiscrepancylem}]
The upper bound is trivial, see (\ref{maintermupperbound}).
For the lower bound, let \(N\geq4\) and recall from (\ref{zndef}) and (\ref{znimplication}) that
\begin{equation*}
n\in Z(N)
= \left\{4\leq n\leq N : h(n)\in \left[\frac{9}{40},\frac{11}{40}\right] \Mod 1\right\}
\implies \Omega(n,x)\gg1.
\end{equation*}
Consequently, we obtain the lower bound
\begin{equation}\label{genbounddiag}
32\pi x^{1/2}\sum_{n\geq 1} \frac{\Omega(n,x)^2}{n^{3/2}}
\geq 
32\pi x^{1/2}\sum_{n\in Z(N)} \frac{\Omega(n,x)^2}{n^{3/2}}
\gg
x^{1/2}N^{-3/2}\#Z(N).
\end{equation}
We therefore seek lower bounds for \(\# Z(N)\). 

We have the following estimate (see (\ref{discrepancydef})): 
\begin{equation*}
\#Z(N)=\frac{N}{20}+\mathcal O(1)+\mathcal O(D(h,N)).
\end{equation*}
We now apply Lemma \ref{hdiscrepancylem}. This shows that for any positive integers \(R\geq1\) and \(p\geq2\), 
\begin{multline}\label{zasymp}
\#Z(N)=\frac{N}{20}\big(1
+\mathcal O\big(R^{-1}\big)
+\mathcal O\big(2^p x^{\frac{1}{2(2P-2)}}N^{-\frac{2p-1}{2(2P-2)}}R^{\frac{1}{2P-2}}\big)\\
+\mathcal O\big(x^{-\frac{1}{2(2P-2)}}N^{\frac{2p-1}{2(2P-2)}-\frac{2}{P}}\log R\big)
+\mathcal O(pN^{-1}\log R\big)\big)+\mathcal O(1),
\end{multline}
where \(P=2^{p-1}\).
Choose 
\begin{equation}\label{choices} 
p=\Big\lfloor \frac12(\log\log x+3)\Big\rfloor,\; N=\Big\lfloor x^{\frac{1}{(2p-3)}}\Big\rfloor, \text{ and } R=\Big\lfloor x^{\frac{1}{(2p-3)(2P-1)}}\Big\rfloor.
\end{equation}
We now bound the error terms in (\ref{zasymp}) with these choices of parameters. Firstly, it is simple to note
\begin{equation*}
R\geq x^{\frac{1}{(2p-3)(2P-1)}} -1
\geq \exp\Big(\frac{(\log x)^{1-\log2/2}}{2^{3/2}\log\log x}\Big)-1\to\infty,
\end{equation*}
in other words \(R^{-1}=o(1)\). 
Next considering the second error term in (\ref{zasymp}), one has
\begin{multline*}
2^p x^{\frac{1}{2(2P-2)}}N^{-\frac{2p-1}{2(2P-2)}}R^{\frac{1}{2P-2}}
\ll \exp\Big(\frac{\log2}{2}\log\log x+\frac{\log x}{2P-2}\Big(\frac12-\frac{2p-1}{2(2p-3)}+\frac{1}{(2p-3)(2P-1)}\Big)\Big)\\
\ll \exp\Big(\frac{\log 2}{2}\log\log x-\frac{(\log x)^{1-\log2/2}}{2^{3/2} \log\log x}\Big)
=o(1).
\end{multline*}
For the third error term in (\ref{zasymp}), one checks 
\begin{multline*}
x^{-\frac{1}{2(2P-2)}}N^{\frac{2p-1}{2(2P-2)}-\frac{2}{P}}\log R
\ll \exp\Big(\log x\Big(\frac{-1}{2(2P-2)}+\frac{2p-1}{2(2p-3)(2P-2)}-\frac{2}{(2p-3)P}\Big)\Big) \log x\\
\ll \exp \Big(-\frac{(\log x)^{1-\log2/2}}{2^{1/2}\log\log x} +\log\log x\Big)
=o(1).
\end{multline*}
For the final error term in (\ref{zasymp}), one checks
\begin{equation*}
pN^{-1}\log R \ll \exp\Big(-\frac{\log x}{\log\log x}+\log\log x\Big)=o(1).
\end{equation*}
The estimate (\ref{zasymp}) therefore shows \(\#Z(N)\sim N/20\) for the \(N\) given in (\ref{choices}), which satisfies \(N\leq \exp(\log x/(\log\log x-2))\). For this choice of \(N\), we thus conclude from (\ref{genbounddiag}) that
\[32\pi x^{1/2}\sum_{n\geq1} \frac{\Omega(n,x)^2}{n^{3/2}}\gg x^{1/2}N^{-1/2}\gg x^{1/2}\exp\Big(-\frac{\log x}{2(\log\log x-2)}\Big)\gg x^{1/2}\exp\Big(-\frac{\log x}{\log\log x}\Big),\]
as claimed.
\end{proof}

\begin{proof}[Proof of Lemma \ref{hdiscrepancylem}]
We apply Lemma \ref{erdosturanlemma}. This shows that for any positive integer \(R\),
\begin{equation}\label{erdosturan}
D(h, N)\ll\frac{N}{R}+\sum_{r\leq R}\frac{1}{r}\Big|\sum_{n\leq N}e(rh(n))\Big|.
\end{equation}
We will apply Lemma \ref{vdcorput} to bound the exponential sums \(\sum_{n\leq N} e(rh(n))\) appearing in (\ref{erdosturan}).
This requires estimates for the derivatives \(h^{(p)}\). 
One has \(h'(\xi)=\sqrt{2x}(1-\kappa^2/(32\pi ^2x\xi))^{1/2}\xi^{-1/2}\)  (see (\ref{hdef}) and (\ref{omega'})). 
By Taylor expanding and differentiating termwise, one computes that for \(p\geq1\) and \(\xi\geq1\), 
\begin{equation}\label{hpsauce}
h^{(p)}(\xi)
=
\sqrt{2x}
\cdot \frac{d^{p-1}}{d\xi^{p-1}} \big\{\xi^{-1/2}\big\}
\cdot \Big(1-S^p\Big(\frac{\kappa^2}{32\pi^2 x\xi}\Big)\Big),
\end{equation}
where for \(X<1\), 
\begin{equation*}
S^p(X)
=
\binom{2(p-1)}{p-1}^{-1}\sum_{l\geq1} \frac{1}{(2l-1)2^{2l}}\binom{p+l-1}{l}\binom{2(p+l-1)}{p+l-1} X^l .
\end{equation*}
Using our assumption \(x\geq k^2/(8\pi^2)\) and the simple bounds \(\frac{2^{2a}}{2\sqrt{a}}\leq \binom{2a}{a}\leq \frac{2^{2a}}{\sqrt{a}}\) (valid for any positive integer \(a\)), for \(\xi\geq 4p\) one can bound
\begin{equation}\label{spbound}
S^p\Big(\frac{\kappa^2}{32\pi^2 x\xi}\Big)
\leq 
2\sum_{l\geq1}\binom{p+l-1}{l}(4\xi)^{-l}
=
2\Big(\Big(1-\frac{1}{4\xi}\Big)^{-p}-1\Big)
\leq
\frac{2p}{\xi}
\leq 
\frac12.
\end{equation}
Additionally, Stirling's formula shows that for \(p\geq1\),
\begin{equation}\label{xideriv}
\frac{d^{p-1}}{d\xi^{p-1}} \big\{\xi^{-1/2}\big\}
=
\frac{(-1)^{p-1}(2(p-1))!}{2^{2(p-1)}(p-1)!} \xi^{1/2-p}
\asymp 
\Big(\frac{p}{e}\Big)^{p-1} \xi^{1/2-p}.
\end{equation}
Using (\ref{spbound}) and (\ref{xideriv}), we conclude from (\ref{hpsauce}) that for \(p\geq1\) and \(\xi\geq 4p\), 
\begin{equation}\label{hpsauced} 
h^{(p)}(\xi)\asymp x^{1/2}\frac{d^{p-1}}{d\xi^{p-1}} \big\{\xi^{-1/2}\big\} \asymp \Big(\frac pe\Big)^{p-1} x^{1/2}\xi^{1/2-p}.
\end{equation}

Now suppose \(p\geq2\), \(r\geq1\) and \(\xi\in[a,b]\), with \(a\geq 4p\) and \(b\leq 2a\).
It follows from (\ref{hpsauced}) that there exist positive (absolute) constants \(A_1\) and \(A_2\) such that
\[A_1 r \Big(\frac pe\Big)^{p-1} x^{1/2}b^{1/2-p}\leq r h^{(p)}(\xi)\leq A_2 2^p r \Big(\frac pe\Big)^{p-1} x^{1/2}b^{1/2-p}.\]
We apply Lemma \ref{vdcorput} with \(\lambda=A_1r(p/e)^{(p-1)} x^{1/2}b^{1/2-p}\) and \(\mu=A_2 2^p/A_1\). 
This gives
\begin{align*} 
\sum_{a<n\leq b}e(rh(n))&\ll b \Big(\frac{A_2 2^p}{A_1}\Big)^{2/P} \Big(A_1 r\Big(\frac{p}{e}\Big)^{p-1} x^{1/2}b^{1/2-p}\Big)^{1/(2P-2)}\\
&\hspace{14em}+b^{1-2/P} \Big(A_1 r \Big(\frac pe\Big)^{p-1} x^{1/2}b^{1/2-p}\Big)^{-1/(2P-2)}\\
&\ll x^{\frac{1}{2(2P-2)}}b^{1-\frac{2p-1}{2(2P-2)}}r^{\frac{1}{2P-2}}+x^{-\frac{1}{2(2P-2)}}b^{1+\frac{2p-1}{2(2P-2)}-\frac{2}{P}}r^{-\frac{1}{2P-2}}, 
\end{align*}
where \(P=2^{p-1}\). In the last line, we used that \(1/2\leq (p/e)^{(p-1)/(2^p-2)}\leq 2\) and \(1\leq 2^{2p/2^{p-1}}\leq 5\) for all \(p\geq 2\). Summing over dyadic intervals and bounding the contribution of \(n\leq 4p\) trivially, we obtain
\begin{equation*} 
\sum_{n\leq N}e(r h(n))\ll x^{\frac{1}{2(2P-2)}}N^{1-\frac{2p-1}{2(2P-2)}}r^{\frac{1}{2P-2}}+x^{-\frac{1}{2(2P-2)}}N^{1+\frac{2p-1}{2(2P-2)}-\frac{2}{P}}r^{-\frac{1}{2P-2}}+p.
\end{equation*}
The lemma now follows upon replacing this bound in (\ref{erdosturan}) and bounding the sums over \(r\).
\end{proof}

\subsection{The Off-diagonal Terms: Proof of Lemma \ref{offdiaglem}}\label{subsecoffdiag}

Our goal is to prove the following off-diagonal bound.

\offdiaglem*

We now fix \(0<\epsilon<1/1000\) for the remainder of this section (§\ref{subsecoffdiag}).
Recall the definition (\ref{offdiag}), which states
\begin{equation}\label{offdiagrep}
(\mathrm{OD})=8\pi^3(-1)^{k/2} x^2 \sum_{m,n\leq \Delta^2k^\epsilon/x} \tilde w\Big(\frac{nx}{k^2+\Delta^2}\Big)\tilde w\Big(\frac{mx}{k^2+\Delta^2}\Big) \sum_{c\geq1} c^{-1}S(m,n;c)J_{k-1}\Big(\frac{4\pi \sqrt{mn}}{c}\Big).
\end{equation}
Throughout this section, we will always assume \(x\geq k^2/(8\pi^2)\) and \(x^{1-\epsilon}\geq \Delta\geq x^{1/2}\). Moreover, considering the ranges of \(m\) and \(c\) in (\ref{offdiagrep}) above, we will also frequently assume \(c\geq1\) and \(m\leq \Delta^2k^\epsilon/x\). Several lemmas appearing in this section will depend on \(m\) and \(\Delta\), and these will hold uniformly for \(m\) and \(\Delta\) in these ranges (and for \(x\geq k^2/(8\pi^2)\)), unless otherwise stated. 


In the following lemma, we perform some initial simplifications. We interchange the order of summation in (\ref{offdiagrep}) and smooth the resulting sums (with a view to applying Poisson summation later on).

\begin{lemma}\label{offdiag3lem}
Let \(g\) be a smooth, compactly supported function satisfying the following:
\begin{itemize}
\item{\(\supp(g)\subset[k/10,\Delta^2 k^\epsilon/x] \),}
\item{\(g(\xi)=1\) for \(k/2\leq\xi\leq \Delta^2 k^\epsilon/(2x)\),}
\item{for all integers \(j\geq 0\) and all \(\xi\), we have \(g^{(j)}(\xi)\ll_j \xi^{-j}\).}
\end{itemize}
We then have\footnote{For notational convenience we also implicitly set \(g(4\pi\sqrt{mn}/c)=0\) for any \(n<0\).}
\begin{multline*}
(\mathrm{OD})=8\pi^3(-1)^{k/2} x^2 \sum_{c\leq 100\Delta^2/(x k^{1-\epsilon})} c^{-1} \sum_{m\leq \Delta^2k^\epsilon/x} \tilde w\Big(\frac{mx}{k^2+\Delta^2}\Big)\\
\sum_{n\in\mathbb{Z}}S(m,n;c)\tilde w\Big(\frac{nx}{k^2+\Delta^2}\Big) g\Big(\frac{4\pi\sqrt{mn}}{c}\Big) J_{k-1}\Big(\frac{4\pi \sqrt{mn}}{c}\Big) +\mathcal O(k^{-1000}).
\end{multline*}
\end{lemma}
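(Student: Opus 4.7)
My plan is to establish the identity by modifying the expression for $(\mathrm{OD})$ from (\ref{offdiagrep}) through three successive stages, each costing only $\mathcal{O}(k^{-1000})$: (i) interchange the order of summation to place $\sum_c$ outside $\sum_{m,n}$; (ii) truncate the resulting $c$-sum at $c \leq 100\Delta^2/(xk^{1-\epsilon})$; and (iii) insert the smoothing factor $g(4\pi\sqrt{mn}/c)$ and extend the $n$-sum to all of $\mathbb{Z}$.

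For (i), absolute convergence is ensured by Weil's bound on $S(m,n;c)$ together with the crude estimate $J_{k-1}(z) \ll (z/2)^{k-1}/\Gamma(k)$, which gives rapid decay of the $c$-tail. For (ii), if $c > 100\Delta^2/(xk^{1-\epsilon})$ and $m, n \leq \Delta^2 k^\epsilon/x$, then $4\pi\sqrt{mn}/c \leq 4\pi k/100 < k/4$, and Lemma~\ref{bessellem}\ref{besi} yields $J_{k-1} \ll k^2 e^{-14(k-1)/13}$. Combined with trivial bounds on Kloosterman sums and with $|\tilde w| \ll (nx)^{-3/4}$ from Lemma~\ref{wgood}, the tail is exponentially small in $k$.

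The smoothing step (iii) amounts to estimating, for each $(m,c)$ in the truncated ranges,
\[
\sum_{n \in \mathbb{Z}} \bigl[\mathbf 1_{[1, N]}(n) - g(4\pi\sqrt{mn}/c)\mathbf 1_{n \geq 1}\bigr]\, \tilde w\!\left(\tfrac{nx}{k^2+\Delta^2}\right) S(m,n;c) J_{k-1}\!\left(\tfrac{4\pi\sqrt{mn}}{c}\right),
\]
where $N := \Delta^2 k^\epsilon/x$. I would partition the summation into three regions. First, for $n \in [1, N]$ with $4\pi\sqrt{mn}/c < k/2$, Lemma~\ref{bessellem}\ref{besii} provides $J_{k-1} \ll \exp(-k^\delta)$ for a suitable $\delta$. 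Second, for $n > N$ with $g(4\pi\sqrt{mn}/c) \neq 0$, we have $nx/(k^2+\Delta^2) \geq k^\epsilon$, so (\ref{tildewibped}) gives $|\tilde w| \ll_A k^{-A\epsilon}$ for any $A$. Third, for $n \in [1, N]$ with $4\pi\sqrt{mn}/c > 2N$, the constraint $mn > c^2 N^2/(4\pi^2)$ together with $m \leq N$ forces $n \geq c^2 N/(4\pi^2)$, placing the argument of $\tilde w$ once more into the rapid-decay regime $\gg k^\epsilon$.

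The delicate case is the third region. The Bessel argument $4\pi\sqrt{mn}/c$ naturally extends up to $\approx 4\pi N$ when $n \leq N$ and $c = 1$, well beyond the plateau $[k/2, 2N]$ on which $g = 1$, so a priori a substantial block of terms is altered by the smoothing. The resolution is that any such summand forces both $m$ and $n$ to be of order $N$, which places both arguments of $\tilde{w}$ into the range where (\ref{tildewibped}) delivers arbitrary polynomial decay in $k$; choosing $A\epsilon$ larger than any polynomial loss from summing over the truncated ranges of $(m, n, c)$ then absorbs this region into $\mathcal O(k^{-1000})$, completing the proof.
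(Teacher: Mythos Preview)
Your approach is correct and essentially matches the paper's own proof: truncate the $c$-sum via the small-argument Bessel bound (Lemma~\ref{bessellem}\ref{besi}), then handle the smoothing in $g$ by splitting according to whether $4\pi\sqrt{mn}/c$ lies below $k/2$ (use Lemma~\ref{bessellem}\ref{besii}) or above $2\Delta^2 k^\epsilon/x$ (force $n\gg \Delta^2 k^\epsilon/x$ and invoke (\ref{tildewibped})). The only cosmetic differences are that the paper truncates $c$ before interchanging and does not separate out your ``second region'' $n>N$ explicitly, folding it into the large-argument case; also, your inequality $nx/(k^2+\Delta^2)\geq k^\epsilon$ should strictly read $\gg k^\epsilon$ (one picks up a factor $(8\pi^2+1)^{-1}$), but this is harmless for applying (\ref{tildewibped}).
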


\begin{proof}
We first truncate the inner sum over \(c\) in (\ref{offdiagrep}) at \(100\Delta^2/(x k^{1-\epsilon})\). If \(c\geq 100\Delta^2/(x k^{1-\epsilon})\) then for any \(m,n\leq \Delta^2k^{\epsilon}/x\), the bound (\ref{bessboundsmallarg}) of Lemma \ref{bessellem} shows
\[J_{k-1}\Big(\frac{4\pi\sqrt{mn}}{c}\Big)\ll \frac{mn}{c^2}e^{-14k/13}.\]
Using the trivial bound \(|S(m,n;c)|\leq c\) and the bound (\ref{w2}) of Lemma \ref{wgood} for the weights \(\tilde w\), the contribution of \(c\geq 100\Delta^2/(x k^{1-\epsilon})\) to (\ref{offdiagrep}) is thus seen to be \(\mathcal O(e^{-k})\). 
Truncating the sum over \(c\) and interchanging the order of summation, we thus rewrite (\ref{offdiagrep}) as 
\begin{multline}\label{offdiag2}
(\mathrm{OD})=8\pi^3(-1)^{k/2} x^2 \sum_{c\leq 100\Delta^2/(x k^{1-\epsilon})} c^{-1} \sum_{m\leq \Delta^2k^\epsilon/x} \tilde w\Big(\frac{mx}{k^2+\Delta^2}\Big)\\
\sum_{n\leq \Delta^2 k^\epsilon/x}S(m,n;c)\tilde w\Big(\frac{nx}{k^2+\Delta^2}\Big)J_{k-1}\Big(\frac{4\pi \sqrt{mn}}{c}\Big) +\mathcal O(e^{-k}).
\end{multline}

Finally, introducing the smoothing \(g(4\pi\sqrt{mn}/c)\) to the sums in (\ref{offdiag2}) produces an error that is \(\ll k^{-1000}\). 
Indeed, the contribution of those terms with \(4\pi\sqrt{mn}/c<k/2\) to (\ref{offdiag2}) is negligible, owing to the decay of the Bessel functions (see part \ref{besii} of Lemma \ref{bessellem}).
Also, since \(m\leq \Delta^2 k^\epsilon/x\), the contribution of those terms with \(4\pi\sqrt{mn}/c>\Delta^2 k^\epsilon/(2x)\) to (\ref{offdiag2}) is negligible, on account of the decay of the terms \(\tilde w(nx/(k^2+\Delta^2))\) for \(n\) in this range (see the bound (\ref{tildewibped}) of Lemma \ref{vorprop}).
The result follows. 
\end{proof}

Our idea is now to replace \(\tilde w\) by the explicit expression given in Lemma \ref{wgood}. This leads to the following lemma, which is the key starting point for our proof of Lemma \ref{offdiaglem}.
\begin{lemma}\label{initalodbound}
We have the bound
\begin{equation*}
(\mathrm{OD})\ll x^{5/4}\sum_{c\leq 100\Delta^2/(x k^{1-\epsilon})} c^{-1}\sum_{m\leq \Delta^2k^\epsilon/x} m^{-3/4} \big(\max_{t\in[1,2]}|S_1|+\max_{t\in[1,2]}|S_2|\big)+x^{-5/4}k^{-4/3+2\epsilon}\Delta^{5/2},
\end{equation*}
where \(S_1=S_1(t;m,c)\) is given by 
\begin{equation}\label{s1'}
S_1
=\sum_{\substack{1\leq a\leq c\\ (a,c)=1}}\Big|\sum_{n\in\mathbb{Z}}(16\pi^2nxt-\kappa^2)^{-3/4}\sin \omega(4\pi \sqrt{nxt})g\Big(\frac{4\pi\sqrt{mn}}{c}\Big) J_{k-1}\Big(\frac{4\pi \sqrt{mn}}{c}\Big)e\Big(\frac{an}{c}\Big)\Big|,
\end{equation}
and \(S_2=S_2(t;m,c)\) is given by\footnote{The factor of \(x\) in the definition of \(S_2\) is included for convenience, so that \(S_1\) and \(S_2\) are of comparable sizes.}
\begin{equation}\label{s2'}
S_2
=x\sum_{\substack{1\leq a\leq c\\ (a,c)=1}}\Big|\sum_{n\in\mathbb{Z}}n(16\pi^2nxt-\kappa^2)^{-7/4}\sin \omega(4\pi \sqrt{nxt}) g\Big(\frac{4\pi\sqrt{mn}}{c}\Big) J_{k-1}\Big(\frac{4\pi \sqrt{mn}}{c}\Big)e\Big(\frac{an}{c}\Big)\Big|.
\end{equation}
\end{lemma}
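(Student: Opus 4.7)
The plan is to substitute the explicit expression for \(\tilde w(nx/(k^2+\Delta^2))\) given in Lemma \ref{wgood} at the \(n\)-variable while applying only the crude bound \(|\tilde w(mx/(k^2+\Delta^2))|\ll(mx)^{-3/4}\) (also from Lemma \ref{wgood}) at the \(m\)-variable. This asymmetric treatment of the two weights is precisely what produces the shapes of \(S_1\) and \(S_2\); it retains the oscillatory information at \(n\), which will later be exploited by Poisson summation, while discarding it at \(m\).

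After this substitution the integrand appearing in Lemma \ref{wgood} naturally splits into the two pieces
\[F_1(n,t)=\frac{12\pi^2nx\,tw(t)}{(16\pi^2nxt-\kappa^2)^{7/4}}\sin\omega(4\pi\sqrt{nxt}), \quad F_2(n,t)=-\frac{w(t)+tw'(t)}{(16\pi^2nxt-\kappa^2)^{3/4}}\sin\omega(4\pi\sqrt{nxt}).\]
I would then interchange the sum over \(n\) with the \(t\)-integral, open the Kloosterman sum via \(S(m,n;c)=\sum_{(a,c)=1}e(a^*m/c)e(an/c)\), pull \(e(a^*m/c)\) out (it has modulus one), and apply the triangle inequality in \(a\). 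The \(F_1\)-piece, after factoring the \(x\) out of \(nx\), becomes \(12\pi^2\,tw(t)\cdot S_2(t;m,c)\), while the \(F_2\)-piece becomes \(|w(t)+tw'(t)|\cdot S_1(t;m,c)\). The ensuing \(t\)-integrals are bounded by \(\max_{t\in[1,2]}|S_j|\) times \(\int_1^2|12\pi^2 tw(t)|\,dt\ll 1\) for \(F_1\), and \(\int_1^2(|w(t)|+|tw'(t)|)\,dt\ll 1\) for \(F_2\). The latter bound is the one subtle point: although \(|w'|\ll\Delta\) is large, \(w'\) is supported on a set of measure \(\ll\Delta^{-1}\), so the \(\Delta\) cancels against the small support after integration. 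Collecting these estimates yields exactly the main term \(x^{5/4}\sum_c c^{-1}\sum_m m^{-3/4}(\max_t|S_1|+\max_t|S_2|)\) of the claimed bound.

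For the \(O((nx)^{-5/4})\) tail from Lemma \ref{wgood}, I would apply the trivial bounds \(|S(m,n;c)|\leq c\) and \(|J_{k-1}|\ll k^{-1/3}\) together with the support constraint of \(g\), which forces \(4\pi\sqrt{mn}/c\geq k/10\) and hence \(n\gg k^2c^2/m\). The geometric tail \(\sum_{n\gg k^2c^2/m}n^{-5/4}\ll m^{1/4}k^{-1/2}c^{-1/2}\), combined with \(\sum_{c\leq\Delta^2/(xk^{1-\epsilon})}c^{-1/2}\ll\Delta\,x^{-1/2}k^{-1/2+\epsilon/2}\) and \(\sum_{m\leq\Delta^2k^\epsilon/x}m^{-1/2}\ll\Delta\,k^{\epsilon/2}x^{-1/2}\), produces an error of size \(x^{-1}\Delta^2k^{-4/3+\epsilon}\). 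Since \(\Delta\geq x^{1/2}\) implies \(\Delta^{1/2}\geq x^{1/4}\), this is absorbed by the claimed error \(x^{-5/4}\Delta^{5/2}k^{-4/3+2\epsilon}\). Overall, this lemma is essentially a careful bookkeeping exercise — the genuine analytic work of evaluating \(S_1\) and \(S_2\) via Poisson summation and stationary phase is left to the subsequent lemmas — and the only point requiring real care is precisely the treatment of the \(tw'(t)\) term above.
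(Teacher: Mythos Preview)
Your proposal is correct and follows essentially the same route as the paper: substitute Lemma \ref{wgood} at the $n$-variable, bound $\tilde w$ crudely at the $m$-variable, open the Kloosterman sum, apply the triangle inequality in $a$, and handle the $tw'(t)$ contribution via the small support of $w'$. The only difference is in the bookkeeping for the $O((nx)^{-5/4})$ remainder: the paper simply sums $\sum_{n\ge 1}n^{-5/4}\ll 1$ without invoking the support of $g$, obtaining the stated error $x^{-5/4}k^{-4/3+2\epsilon}\Delta^{5/2}$ directly, whereas you exploit the support restriction $n\gg k^2c^2/m$ to get the slightly sharper $x^{-1}\Delta^2k^{-4/3+\epsilon}$ and then absorb it using $\Delta\ge x^{1/2}$; both are valid and lead to the same conclusion.
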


\begin{proof}
Replacing \(\tilde w(nx/(k^2+\Delta^2))\) by the expression given in Lemma \ref{wgood} in Lemma \ref{offdiag3lem} and interchanging the order of summation and integration shows
\begin{multline}\label{offdiag4}
(\mathrm{OD})=2^{9/2}\pi^{5/2}(-1)^{k/2}x^2\sum_{c\leq 100\Delta^2/(x k^{1-\epsilon})} c^{-1}\sum_{m\leq \Delta^2k^\epsilon/x} \tilde w\Big(\frac{mx}{k^2+\Delta^2}\Big)\int_0^\infty \Big\{12\pi^2x tw(t)\\
\sum_{n_1\in\mathbb{Z}} S(m,n_1;c) n_1(16\pi^2n_1xt-\kappa^2)^{-7/4}\sin\omega(4\pi\sqrt{n_1xt})g\Big(\frac{4\pi\sqrt{mn_1}}{c}\Big) J_{k-1}\Big(\frac{4\pi \sqrt{mn_1}}{c}\Big) -(w(t)\\
+tw'(t))\sum_{n_2\in\mathbb{Z}} S(m,n_2;c)(16\pi^2n_2 xt-\kappa^2)^{-3/4}\sin\omega(4\pi\sqrt{n_2xt})g\Big(\frac{4\pi\sqrt{mn_2}}{c}\Big) J_{k-1}\Big(\frac{4\pi \sqrt{mn_2}}{c}\Big)\Big\}dt \\
+\mathcal O\Big(x^2\sum_{c\leq 100\Delta^2/(x k^{1-\epsilon})} c^{-1} \sum_{m\leq \Delta^2 k^\epsilon/x}\Big|\tilde w \Big(\frac{mx}{k^2+\Delta^2}\Big)\Big|
\\
\sum_{n\geq 1}(nx)^{-5/4}\Big|S(m,n;c) g\Big(\frac{4\pi\sqrt{mn}}{c}\Big) J_{k-1}\Big(\frac{4\pi \sqrt{mn}}{c}\Big)\Big|\Big)+\mathcal O(k^{-1000}) .
\end{multline}

To bound the error term above, we use the following bounds. From Lemma \ref{wgood}, we have \(|\tilde w(mx/(k^2+\Delta^2))|\ll (mx)^{-3/4}\). Trivially \(|S(m,n;c)|\leq c\). By the construction of \(g\), we have \(|g(\xi)|\ll 1\) for all \(\xi\). Finally, we use the Bessel function bound (\ref{bessel2}) of Lemma \ref{bessellem}, which gives \(|J_{k-1}(4\pi\sqrt{mn}/c)|\ll k^{-1/3}\). These bounds show that the error in (\ref{offdiag4}) is 
\[\ll k^{-1/3} \sum_{c\leq 100\Delta^2/(x k^{1-\epsilon})}\sum_{m\leq \Delta^2 k^\epsilon/x} m^{-3/4}\sum_{n\geq1} n^{-5/4}\ll x^{-5/4}k^{-4/3+2\epsilon}\Delta^{5/2}.\]

Opening the Kloosterman sums in (\ref{offdiag4}) and interchanging the order of summation, we now obtain 
\begin{multline}\label{offdiag4v2}
(\mathrm{OD})=2^{9/2}\pi^{5/2}(-1)^{k/2}x^2\sum_{c\leq 100\Delta^2/(x k^{1-\epsilon})} c^{-1} \sum_{m\leq \Delta^2k^\epsilon/x} \tilde w\Big(\frac{mx}{k^2+\Delta^2}\Big)\\
\int_0^\infty \Big\{12\pi^2x tw(t) \sum_{\substack{1\leq a_1\leq c\\(a_1,c)=1}} e\Big(\frac{a_1^*m}c\Big)\sum_{n_1\in\mathbb{Z}} n_1(16\pi^2n_1xt-\kappa^2)^{-7/4}\sin\omega(4\pi\sqrt{n_1xt})g\Big(\frac{4\pi\sqrt{mn_1}}{c}\Big) \\
J_{k-1}\Big(\frac{4\pi \sqrt{mn_1}}{c}\Big)e\Big(\frac{a_1n_1}c\Big) -(w(t)+tw'(t))\sum_{\substack{1\leq a_2\leq c\\(a_2,c)=1}} e\Big(\frac{a_2^*m}c\Big)\sum_{n_2\in\mathbb{Z}} (16\pi^2n_2xt-\kappa^2)^{-3/4}\\
\sin\omega(4\pi\sqrt{n_2xt})g\Big(\frac{4\pi\sqrt{mn_2}}{c}\Big) J_{k-1}\Big(\frac{4\pi \sqrt{mn_2}}{c}\Big)e\Big(\frac{a_2n_2}c\Big)\Big\}dt 
+\mathcal O(x^{-5/4}k^{-4/3+2\epsilon}\Delta^{5/2}).
\end{multline}
Using the bound \(\tilde w(mx/(k^2+\Delta^2))\ll (mx)^{-3/4}\)
(which is (\ref{w2}) of Lemma \ref{wgood}), the triangle inequality shows
\begin{multline}\label{offdiag4v3}
(\mathrm{OD})\ll x^{5/4}\sum_{c\leq 100\Delta^2/(x k^{1-\epsilon})} c^{-1} \sum_{m\leq \Delta^2k^\epsilon/x} m^{-3/4}\\
\int_0^\infty \big\{|tw(t)| |S_2(t;m,c)|+|w(t)+tw'(t)||S_1(t;m,c)|\big\}dt+x^{-5/4}k^{-4/3+2\epsilon}\Delta^{5/2}\\
\ll x^{5/4}\sum_{c\leq 100\Delta^2/(x k^{1-\epsilon})} c^{-1} \sum_{m\leq \Delta^2k^\epsilon/x} m^{-3/4}\Big(\max_{t\in[1,2]}|S_2|\int_0^\infty |tw(t)|dt\\
+\max_{t\in[1,2]}|S_1|\int_0^\infty |w(t)+tw'(t)|dt\Big)
+x^{-5/4}k^{-4/3+2\epsilon}\Delta^{5/2}.
\end{multline}
In the last line, we observed that the range of integration is \(t\in\supp w=[1,2]\). Recall also (from Definition \ref{wdef}) that \(w\ll 1\), \(\supp w'=[1, 1+\Delta^{-1}]\cup [2-\Delta^{-2}, 2]\) and \(w'\ll \Delta\). It follows
\[\int_0^\infty |tw(t)|dt\ll 1 \text{ and } \int_0^\infty |w(t)+tw'(t)|dt\ll 1.\]
The lemma now follows from (\ref{offdiag4v3}). 
\end{proof}

\begin{remark}
Considering \(t\in[1,2]\) in Lemma \ref{initalodbound}, from now on we implicitly assume \(t\in[1,2]\) wherever this variable \(t\) appears. Indeed, the following lemmas hold uniformly for \(t\in[1,2]\). 
\end{remark}

We wish to bound the sums \(S_1\) and \(S_2\). To do so, we first apply Poisson summation, which gives the following.

\begin{lemma}[Poisson Summation]\label{poissonlem2}
For \(i=1,2\) we have the bounds 
\begin{equation*}
S_i\ll c^{1/2}m^{-1/4}\sum_{\substack{n\geq0 \\ (n,c)=1}}|\mathcal I_i(n)|.
\end{equation*}
Here
\[\mathcal I_i(n)=\int_0^\infty G_i(y) \sin\omega\Big(\frac{c\sqrt{xt}}{\sqrt{m}}y\Big)e\Big(\frac{cn}{16\pi^2 m}y^2\Big)dy,\]
where
\begin{equation}\label{G1}
G_1(y)=c^{3/2}m^{-3/4}y\Big(\frac{c^2xt}{m}y^2-\kappa^2\Big)^{-3/4}g(y)J_{\kappa}(y), 
\end{equation}
and 
\begin{equation}\label{G2}
G_2(y)=c^{7/2}m^{-7/4}x y^3\Big(\frac{c^2xt}{m}y^2-\kappa^2\Big)^{-7/4} g(y)J_{\kappa}(y).
\end{equation}
\end{lemma}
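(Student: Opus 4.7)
The plan is to apply Poisson summation to the inner $n$-sum inside each $|\cdots|$ in the definitions (\ref{s1'}) and (\ref{s2'}), and then change variables in the resulting oscillatory integrals so as to recognise them as scalar multiples of $\mathcal I_i(\cdot)$. Thanks to the cutoff $g$ and the convention in Lemma \ref{offdiag3lem}, the summand $F_i$ (as a function of $n$, with the exponential twist $e(an/c)$ stripped off) is identically zero outside a bounded subset of $(0,\infty)$, so it extends by zero to a smooth compactly supported function on $\mathbb R$ and Poisson summation applies without issue.

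Fix $a$ with $1\leq a\leq c$ and $(a,c)=1$, and write the inner sum as $T_i(a)=\sum_{n\in\mathbb Z}F_i(n)e(an/c)$. Poisson summation followed by the substitution $n'=c\tilde n-a$ gives
\begin{equation*}
T_i(a)=\sum_{n'\equiv -a\Mod{c}}\int_0^\infty F_i(y)\,e(-n'y/c)\,dy,
\end{equation*}
and $(n',c)=(a,c)=1$ automatically. Next make the change of variable $u=4\pi\sqrt{my}/c$, so that $y=c^2u^2/(16\pi^2 m)$ and $dy=(c^2/(8\pi^2 m))u\,du$. This converts the Bessel argument $4\pi\sqrt{my}/c$ to $u$, sends $16\pi^2 yxt-\kappa^2$ to $c^2xtu^2/m-\kappa^2$, turns $\omega(4\pi\sqrt{yxt})$ into $\omega(cu\sqrt{xt}/\sqrt m)$, and transforms the phase $e(-n'y/c)$ into $e(-n'cu^2/(16\pi^2 m))$.

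Recognising the definitions (\ref{G1}) and (\ref{G2}) of $G_1,G_2$, a brief calculation tracking the Jacobian and the remaining powers of $c$ and $m$ yields
\begin{equation*}
T_1(a)=\frac{c^{1/2}}{8\pi^2 m^{1/4}}\sum_{n'\equiv -a\Mod{c}}\mathcal I_1(-n'),\qquad T_2(a)=\frac{c^{1/2}}{128\pi^4\,x\,m^{1/4}}\sum_{n'\equiv -a\Mod{c}}\mathcal I_2(-n'),
\end{equation*}
the extra $x^{-1}$ in $T_2(a)$ being exactly compensated by the factor of $x$ included in the definition (\ref{s2'}) of $S_2$.

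It remains to take absolute values and sum over $a$. As $a$ runs over units modulo $c$ and $n'$ runs over integers with $n'\equiv -a\Mod{c}$, the integer $n'$ sweeps each residue class coprime to $c$ exactly once, so $\sum_{a}\sum_{n'}|\mathcal I_i(-n')|=\sum_{(n',c)=1}|\mathcal I_i(-n')|$. Since $G_i$ and $\sin\omega(\cdots)$ are real-valued we have $\mathcal I_i(-n)=\overline{\mathcal I_i(n)}$, hence $|\mathcal I_i(-n)|=|\mathcal I_i(n)|$; this lets us restrict to $n\geq 0$ at the cost of a factor $2$, giving the claimed bound for both $i=1,2$. The only step requiring genuine care is the bookkeeping in the change of variable, which must produce the common prefactor $c^{1/2}m^{-1/4}$ for both indices; everything else is a standard Poisson-plus-substitution computation.
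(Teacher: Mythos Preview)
Your proof is correct and follows essentially the same route as the paper. The only cosmetic difference is that you apply Poisson summation directly to the twisted sum $\sum_n F_i(n)e(an/c)$ and then relabel via $n'=c\tilde n-a$, whereas the paper first splits $n$ into residue classes modulo $c$ and applies Poisson to the resulting $l$-sum; these are two ways of writing the same identity, and the subsequent change of variable $u=4\pi\sqrt{my}/c$ and the use of $\mathcal I_i(-n)=\overline{\mathcal I_i(n)}$ are identical to the paper's steps.
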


\begin{proof}
We consider only \(S_1\) here, since \(S_2\) may be handled in exactly the same way. Splitting into progressions, write
\begin{multline}\label{residuesplits}
S_1=\sum_{\substack{1\leq a\leq c\\ (a,c)=1}}\Big|\sum_{n\in\mathbb{Z}}(16\pi^2nxt-\kappa^2)^{-3/4}\sin \omega(4\pi \sqrt{nxt})g\Big(\frac{4\pi\sqrt{mn}}{c}\Big) J_{k-1}\Big(\frac{4\pi \sqrt{mn}}{c}\Big)e\Big(\frac{an}{c}\Big)\Big|\\
=\sum_{\substack{1\leq a\leq c\\ (a,c)=1}}\Big|\sum_{b\Mod c} e\Big(\frac{ab}{c}\Big)\sum_{\substack{l\in\mathbb{Z}}}(16\pi^2(b+lc)xt-\kappa^2)^{-3/4}\sin \omega(4\pi \sqrt{(b+lc)xt})\\
g\Big(\frac{4\pi\sqrt{m(b+lc)}}{c}\Big) J_{k-1}\Big(\frac{4\pi \sqrt{m(b+lc)}}{c}\Big)\Big|.
\end{multline}
Applying Poisson summation to the inner sum over \(l\), we find
\begin{align*}
&\sum_{\substack{l\in\mathbb{Z}}}(16\pi^2(b+lc)xt-\kappa^2)^{-3/4}\sin \omega(4\pi \sqrt{(b+lc)xt})
g\Big(\frac{4\pi\sqrt{m(b+lc)}}{c}\Big) J_{k-1}\Big(\frac{4\pi \sqrt{m(b+lc)}}{c}\Big)\\
=&\sum_{\tilde l\in\mathbb{Z}}\int_0^\infty (16\pi^2(b+vc)xt-\kappa^2)^{-3/4}\sin \omega(4\pi \sqrt{(b+vc)xt})\\
&\hspace{10em} g\Big(\frac{4\pi\sqrt{m(b+vc)}}{c}\Big) J_{k-1}\Big(\frac{4\pi \sqrt{m(b+vc)}}{c}\Big)e(-\tilde lv)dv\\
=&\sum_{\tilde l\in\mathbb{Z}}\int_0^\infty (16\pi^2 uxt-\kappa^2)^{-3/4}\sin \omega(4\pi \sqrt{uxt})
g\Big(\frac{4\pi\sqrt{mu}}{c}\Big) J_{k-1}\Big(\frac{4\pi \sqrt{mu}}{c}\Big)e\Big(-\frac{\tilde l(u-b)}{c}\Big)\frac{du}{c}.
\end{align*}
(In the last line, we have set \(u=b+vc\).) Consequently, we obtain from (\ref{residuesplits})
\begin{align}\label{triangleineqbounds}
S_1&=\sum_{\substack{1\leq a\leq c\\ (a,c)=1}}\Big|\sum_{\tilde l\in \mathbb{Z}}\int_0^\infty (16\pi^2 uxt-\kappa^2)^{-3/4}\sin \omega(4\pi \sqrt{uxt})
g\Big(\frac{4\pi\sqrt{mu}}{c}\Big)J_{k-1}\Big(\frac{4\pi \sqrt{mu}}{c}\Big)\\
&\hspace{20em} e\Big(-\frac{\tilde lu}{c}\Big) \Big\{\frac{1}{c}\sum_{b\Mod c} e\Big(\frac{ b(\tilde l+a)}{c}\Big)\Big\} du\Big|\nonumber\\
&=\sum_{\substack{1\leq a\leq c\\ (a,c)=1}}\Big|\sum_{\substack{\tilde l\in \mathbb{Z}\\ \tilde l\equiv -a\Mod c}}\int_0^\infty (16\pi^2 uxt-\kappa^2)^{-3/4}\sin \omega(4\pi \sqrt{uxt})
g\Big(\frac{4\pi\sqrt{mu}}{c}\Big)
\nonumber\\
&\hspace{24.15em} J_{k-1}\Big(\frac{4\pi \sqrt{mu}}{c}\Big)e\Big(-\frac{\tilde lu}{c}\Big) du\Big|\nonumber\\
&\leq \sum_{\substack{n\in\mathbb{Z}\\ (n,c)=1}}\Big|\int_0^\infty (16\pi^2 uxt-\kappa^2)^{-3/4}\sin \omega(4\pi \sqrt{uxt})
g\Big(\frac{4\pi\sqrt{mu}}{c}\Big) J_{k-1}\Big(\frac{4\pi \sqrt{mu}}{c}\Big)e\Big(-\frac{nu}{c}\Big) du\Big|\nonumber.
\end{align}
The last line is a consequence of the triangle inequality. Setting \(y=4\pi\sqrt{mu}/c\), we find
\begin{multline*}
\int_0^\infty (16\pi^2 uxt-\kappa^2)^{-3/4}\sin \omega(4\pi \sqrt{uxt})
g\Big(\frac{4\pi\sqrt{mu}}{c}\Big) J_{k-1}\Big(\frac{4\pi \sqrt{mu}}{c}\Big)
e\Big(-\frac{nu}{c}\Big) du\\
=\frac{c^{1/2}}{8\pi^2m^{1/4}}\mathcal I_1(-n).
\end{multline*}
Noting \(\mathcal I_1(-n)=\overline{\mathcal I_1(n)}\), the result now follows from (\ref{triangleineqbounds}). 
\end{proof}

\begin{remark} 
Since the integrals \(\mathcal I_i(n)\) are taken over \(y\in\supp g=[k/10, \Delta^2 k^\epsilon/x]\), from now on we will assume \(k/10\leq y\leq \Delta^2 k^\epsilon/x\) wherever this variable appears.
\end{remark}
Our goal is now to bound the integrals \(\mathcal I_i(n)\) appearing in Lemma \ref{poissonlem2} via the method of stationary phase. Before proceeding further, in the following lemma we first handle some very simple cases where \(\mathcal I_i(n)\) is negligibly small. After some minor aesthetic rearrangements, we thus truncate the sum over \(n\) in Lemma \ref{poissonlem2}.

\begin{lemma} \label{s12'lem}
Assume \(\Delta\leq x^{2/3}k^{1/3-\epsilon}\). For \(i=1,2\) we have
\begin{equation}\label{s12'}
S_i\ll c^{1/2}m^{-1/4}\sum_{\frac{m^{1/2}x^{3/2}}{\Delta^2 k^\epsilon}\leq n\leq \frac{1000\sqrt{mx}}{k}}|\mathcal I_i'(n)|+k^{-1000},
\end{equation}
where
\begin{equation}\label{i'def}
\mathcal I_i'(n)=\int_0^\infty G_i(y)e^{iF(y)}dy.
\end{equation}
Here \(G_1\) and \(G_2\) are the functions given in Lemma \ref{poissonlem2}, and 
\begin{equation}\label{F}
F(y)=\frac{cn}{8\pi m}y^2-\omega\Big(\frac{c\sqrt{xt}}{\sqrt{m}}y\Big).
\end{equation}
\end{lemma}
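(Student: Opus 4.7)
The plan is to start from the bound of Lemma~\ref{poissonlem2},
\[S_i\ll c^{1/2}m^{-1/4}\sum_{n\geq 0,\,(n,c)=1}|\mathcal I_i(n)|,\]
and use $\sin\omega=(e^{i\omega}-e^{-i\omega})/(2i)$ to write $\mathcal I_i(n)=\frac{1}{2i}(\mathcal I_i^+(n)-\mathcal I_i'(n))$, where
\[\mathcal I_i^\pm(n)=\int_0^\infty G_i(y)e^{iF^\pm(y)}dy,\qquad F^\pm(y)=\frac{cn}{8\pi m}y^2\pm\omega\Big(\frac{c\sqrt{xt}}{\sqrt m}y\Big).\]
Since $F=F^-$, the proof reduces to two bounds, both to be obtained via the integration-by-parts estimate of Lemma~\ref{bkyipb}: first, that $\mathcal I_i^+(n)$ decays faster than any power of $k$ for \emph{every} $n\geq 0$; and second, that $\mathcal I_i'(n)$ has the same decay whenever $n$ lies outside the stated range. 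Summing these negligible contributions (in an arithmetic progression modulo $c$, of effective length $O(\Delta^2/x)$) is absorbed into the $k^{-1000}$ error.

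For the "$+$" piece, the derivative $(F^+)'(y)=cny/(4\pi m)+\omega'(c\sqrt{xt/m}\,y)\cdot c\sqrt{xt/m}$ is a sum of two non-negative terms on the support of $g$, with the $\omega'$ contribution alone being $\gtrsim c\sqrt{x/m}$ once $c\sqrt{xt/m}\,y$ exceeds $\kappa$ by a fixed factor (which one can assume on the effective support, since otherwise $J_\kappa(y)$ in $G_i$ is exponentially small in $k$). Lemma~\ref{bkyipb} applied with $R\sim c\sqrt{x/m}$, together with standard estimates for the higher derivatives of $F^+$ (from $\omega^{(j)}$ and the quadratic) and of $G_i$ (from its smooth factors and from $J_\kappa^{(j)}$ in the oscillatory regime), then yields the required decay uniformly in $n\geq 0$.

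For the "$-$" piece I would locate the stationary points of $F$ by solving $F'(y_0)=0$, which reduces to the quadratic $(cny_0^2/(4\pi m))^2=c^2xty_0^2/m-\kappa^2$. This admits two positive roots $y_0^{(-)}<y_0^{(+)}$ precisely when $n\leq 2\pi cxt/\kappa$, with $y_0^{(+)}\sim 4\pi\sqrt{mxt}/n$. The range of $n$ in the lemma statement is chosen, with ample margin, so that $y_0^{(+)}$ lies inside $\supp g=[k/10,\,10\Delta^2k^\epsilon/x]$; outside that range, $F'(y)$ retains a definite sign on $\supp g$ with $|F'(y)|$ bounded away from zero. Specifically, if $n<m^{1/2}x^{3/2}/(10\Delta^2k^\epsilon)$ then $y_0^{(+)}>10\Delta^2k^\epsilon/x$ and $y_0^{(-)}<k/10$ (the latter using $\Delta\leq x^{2/3}k^{1/3-\epsilon}$), so $F'(y)<0$ on $\supp g$ with $|F'(y)|\gtrsim c\sqrt{x/m}$; if $n>1000\sqrt{mx}/k$ then either no real stationary points exist (the case $n>2\pi cxt/\kappa$) or $y_0^{(+)}<k/10$, and in either case $F'(y)>0$ on $\supp g$ with a comparable lower bound. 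Lemma~\ref{bkyipb} then delivers the needed bound.

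The main obstacle will be the careful parameter bookkeeping for Lemma~\ref{bkyipb}. The amplitude $G_i(y)$ carries both the singular factor $(c^2xty^2/m-\kappa^2)^{-3/4}$ (or $-7/4$) and $J_\kappa(y)$, each with its own scale of variation over $\supp g$; simultaneously the phases $F^\pm$ inherit from $\omega^{(j)}(z)\asymp\kappa^2 z^{-2}(z^2-\kappa^2)^{-(j-1)/2}$ (and from the quadratic) chain-rule factors of $c\sqrt{xt/m}$. Verifying that the hypotheses $QR/Y^{1/2}\geq k^\delta$ and $RU\geq k^\delta$ of Lemma~\ref{bkyipb} hold for a fixed $\delta>0$ uniformly over the full ranges $c\leq 100\Delta^2/(xk^{1-\epsilon})$, $m\leq\Delta^2k^\epsilon/x$ and $t\in[1,2]$ is precisely what forces the hypothesis $\Delta\leq x^{2/3}k^{1/3-\epsilon}$ of the lemma.
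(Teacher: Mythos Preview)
Your approach is correct and essentially the same as the paper's: split $\sin\omega$ into exponentials and use Lemma~\ref{bkyipb} to kill the ``$+$'' integrals for all $n\geq 0$ and the ``$-$'' integrals for $n$ outside the stated range (the paper treats $n=0$, the ``$+$'' piece, large $n$, and small $n$ as four separate cases, but the underlying estimates are the ones you describe).

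Two small corrections to your bookkeeping, however. First, the reason $(c^2xt/m-\kappa^2/y^2)^{1/2}\asymp c\sqrt{x/m}$ on $\supp g$ is simply that $c^2xt/m\gg k^{2/3}$ (from $m\leq\Delta^2k^\epsilon/x$ and $\Delta\leq x^{2/3}k^{1/3-\epsilon}$) while $\kappa^2/y^2\ll 1$ (from $y\geq k/10$); the argument of $\omega$ here is $c\sqrt{xt/m}\,y$, not $y$, so smallness of $J_\kappa(y)$ is irrelevant to this step. Second, the derivative bounds for $G_i$ cannot use oscillatory-regime asymptotics for $J_\kappa$, since $\supp g=[k/10,10\Delta^2k^\epsilon/x]$ contains the transition $y\approx\kappa$ where those blow up; the paper instead uses the uniform bound $J_\nu\ll\nu^{-1/3}$ together with the recursion $J_\kappa'=(J_{\kappa-1}-J_{\kappa+1})/2$, yielding $G_i^{(j)}\ll x^{-3/4}k^{-5/6}$ for all $j\geq 0$ (so $U=1$ in Lemma~\ref{bkyipb}).
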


\begin{proof}
Starting from Lemma \ref{poissonlem2}, we first note 
\begin{multline}\label{cosreplaced}
\mathcal I_i(n)=\int_0^\infty G_i(y)\sin\omega\Big(\frac{c\sqrt{xt}}{\sqrt{m}}y\Big)e\Big(\frac{cn}{16\pi^2 m}y^2\Big)dy\\
=\frac{1}{2i}\int_0^\infty G_i(y)\exp\Big\{i\Big(\frac{cn}{8\pi m}y^2+\omega\Big(\frac{c\sqrt{xt}}{\sqrt{m}}y\Big)\Big)\Big\}dy\\
-\frac{1}{2i}\int_0^\infty G_i(y)\exp\Big\{i\Big(\frac{cn}{8\pi m}y^2-\omega\Big(\frac{c\sqrt{xt}}{\sqrt{m}}y\Big)\Big)\Big\}dy.
\end{multline}
Using Lemma \ref{bkyipb}, we will show that the integrals above contribute \(\ll k^{-1000}\) to \(S_i\) in many cases. To do so, we must bound the derivatives of the oscillatory phases in (\ref{cosreplaced}). One computes (using (\ref{omega'}))
\begin{equation}\label{firstderiv0}
\frac{d}{dy}\Big\{\frac{cn}{8\pi m}y^2\pm \omega\Big(\frac{c\sqrt{xt}}{\sqrt{m}}y\Big)\Big\}=\frac{cn}{4\pi m} y\pm \Big(\frac{c^2xt}{m}-\frac{\kappa^2}{y^2}\Big)^{1/2},
\end{equation}
and (using (\ref{omega''}))
\begin{equation}\label{secondderiv0}
\frac{d^2}{dy^2}\Big\{\frac{cn}{8\pi m}y^2\pm \omega\Big(\frac{c\sqrt{xt}}{\sqrt{m}}y\Big)\Big\}=\frac{cn}{4\pi m}\pm\frac{\kappa^2}{y^3} \Big(\frac{c^2xt}{m}-\frac{\kappa^2}{y^2}\Big)^{-1/2}.
\end{equation}
Recall we assume \(\Delta\leq x^{2/3}k^{1/3-\epsilon}\). 
From this, using also the standing assumptions that \(x\geq k^2/(8\pi^2)\), \(c\geq1\), \(1\leq t\leq 2\), \(m\leq \Delta^2k^\epsilon/x\) and \(k/10\leq y\leq \Delta^2k^\epsilon/x\), one has
\begin{equation}\label{asympgoodone}
\frac{c^2xt}{m}
\gg k^{2/3}\text{ and } \frac{\kappa^2}{y^2}\ll 1\implies \Big(\frac{c^2xt}{m}-\frac{\kappa^2}{y^2}\Big)
\asymp \frac{c^2 x}{m}.
\end{equation}
In particular, using \(m\leq \Delta^2 k^\epsilon/x\), \(\Delta\leq x^{2/3}k^{1/3-\epsilon}\), \(y\geq k/10\) and \(c\geq1\), we obtain for \(n\neq0\)
\begin{equation}\label{littleonicer}
\frac{d^2}{dy^2}\Big\{\omega\Big(\frac{c\sqrt{xt}}{\sqrt{m}}y\Big)\Big\}=\frac{\kappa^2}{y^3} \Big(\frac{c^2xt}{m}-\frac{\kappa^2}{y^2}\Big)^{-1/2}\asymp \frac{k^2 m^{1/2}}{y^3 c x^{1/2}}
=o\Big(\frac{cn}{4\pi m}\Big).
\end{equation}
Consequently, for \(n\neq0\) we have established the second derivative estimate
\begin{equation}\label{secondderivasymp}
\frac{d^2}{dy^2}\Big\{\frac{cn}{8\pi m}y^2\pm \omega\Big(\frac{c\sqrt{xt}}{\sqrt{m}}y\Big)\Big\}\asymp \frac{cn}{m}.
\end{equation}

We next bound the higher derivatives. For \(y\geq k/10\), one differentiates repeatedly (using (\ref{asympgoodone})) to see that for any \(a>0\) and any integer \(j\geq 1\),
\begin{equation}\label{omegaderivatives}
\frac{d^j}{dy^j} \Big\{\Big(\frac{c^2xt}{m}-\frac{\kappa^2}{y^2}\Big)^{-a}\Big\}
\ll_{j,a}
\frac{\kappa^2}{y^{2+j}}\Big(\frac{c^2xt}{m}-\frac{\kappa^2}{y^2}\Big)^{-a-1}
\ll_{j,a} 
y^{-j}\Big(\frac{c^2xt}{m}-\frac{\kappa^2}{y^2}\Big)^{-a}.
\end{equation}
So for any \(j\geq2\) and \(y\geq k/10\), we differentiate repeatedly, applying (\ref{littleonicer}) and (\ref{omegaderivatives}) to obtain the bound
\begin{multline}\label{phasederivbound}
\frac{d^j}{dy^j}\Big\{\frac{cn}{8\pi m}y^2\pm\omega\Big(\frac{c\sqrt{xt}}{\sqrt{m}}y\Big)\Big\}
\ll
\frac{cn}{m}\delta_{j=2}
+\Big|\frac{d^{j-2}}{dy^{j-2}} \Big\{\frac{\kappa^2}{y^3}\Big(\frac{c^2xt}{m}-\frac{\kappa^2}{y^2}\Big)^{-1/2}\Big\}\Big|\\
\ll_j
\frac{cn}{m}\delta_{j=2} 
+\frac{1}{y^{j-2}}\cdot \frac{\kappa^2}{y^3}\Big(\frac{c^2xt}{m}-\frac{\kappa^2}{y^2}\Big)^{-1/2}
\ll_j
\frac{cn}{m} y^{2-j}
\ll_j
\frac{cn}{m} k^{2-j}.
\end{multline}


Finally, we must bound the derivatives of \(G_1\) and \(G_2\). Recall (from (\ref{G1}) and (\ref{G2}))
\begin{equation*}
G_1(y)=c^{3/2}m^{-3/4}y^{-1/2}\Big(\frac{c^2xt}{m}-\frac{\kappa^2}{y^2}\Big)^{-3/4}g(y)J_{\kappa}(y), 
\end{equation*}
and 
\begin{equation*}
G_2(y)=c^{7/2}m^{-7/4}x y^{-1/2}\Big(\frac{c^2xt}{m}-\frac{\kappa^2}{y^2}\Big)^{-7/4} g(y)J_{\kappa}(y).
\end{equation*}
Recall also \(g^{(j)}(y)\ll_j y^{-j}\) (by our construction, see Lemma \ref{offdiag3lem}). For any \(\nu\geq0\), one has
\begin{equation}\label{besseldiff2}
J_\nu'(y)=\frac{1}{2}(J_{\nu-1}(y)-J_{\nu+1}(y)).
\end{equation}
This is easily checked from the definition (\ref{besseldef}) (see also \cite[§3.2]{watson}). Using the derivative bound (\ref{omegaderivatives}), and using (\ref{besseldiff2}) to differentiate the Bessel functions (\(\leq j\) times), we obtain the bound
\begin{equation} \label{argderivbound}
G_i^{(j)}(y)\ll_j x^{-3/4}y^{-1/2}(y^{-j}|J_{\kappa}(y)|+|J_{\kappa+\mathcal O_j(1)} (y)|)\ll_j x^{-3/4}k^{-5/6} \text{ for } j=0,1,2\ldots
\end{equation}
Here we also used the bound \(J_{\kappa+\mathcal O(1)}(y)\ll k^{-1/3}\) given in (\ref{bessel2}) (and our assumption \(y\geq k/10\)).

Using these calculations, we now apply Lemma \ref{bkyipb} to show that the integrals appearing in (\ref{cosreplaced}) are negligible (i.e. contribute \(\ll k^{-1000}\)) in the following four simple cases.

\begin{enumerate}[wide, label=(\roman*)]
\item \label{ibpi}
Firstly, consider the term \(\mathcal I_i(0)\) (which appears in Lemma \ref{poissonlem2} only if \(c=1\)). Here, it follows from (\ref{asympgoodone}) that the phase satisfies
\begin{equation}\label{firstderiv0case}
\frac{d}{dy}\Big\{\pm \omega\Big(\frac{\sqrt{xt}}{\sqrt{m}}y\Big)\Big\}=\pm \Big(\frac{xt}{m}-\frac{\kappa^2}{y^2}\Big)^{1/2}\gg \Big(\frac{x}{m}\Big)^{1/2}.
\end{equation}
Turning our attention to the second derivative, the \(c=1\) case of (\ref{littleonicer}) gives that for \(y\geq k/10\)
\[\frac{d^2}{dy^2}\Big\{\pm \omega\Big(\frac{\sqrt{xt}}{\sqrt{m}}y\Big)\Big\}
=
\pm\frac{\kappa^2}{y^3}\Big(\frac{xt}{m}-\frac{\kappa^2}{y^2}\Big)^{-1/2}
\asymp 
\frac{k^2m^{1/2}}{y^3 x^{1/2}}
\ll 
\frac{1}{k}\Big(\frac{m}{x}\Big)^{1/2}.\]
Differentiating further, using (\ref{omegaderivatives}) and the above, we have for \(j\geq2\) and \(y\geq k/10\) that
\begin{equation}\label{manyderiv0case}
\frac{d^j}{dy^j}\Big\{\pm \omega\Big(\frac{\sqrt{xt}}{\sqrt{m}}y\Big)\Big\}
=
\frac{d^{j-2}}{dy^{j-2}}\Big\{\pm\frac{\kappa^2}{y^3}\Big(\frac{xt}{m}-\frac{\kappa^2}{y^2}\Big)^{-1/2}\Big\}
\ll_j
y^{-(j-2)} \frac{1}{k}\Big(\frac{m}{x}\Big)^{1/2}
\ll_j 
k^{1-j}\Big(\frac{m}{x}\Big)^{1/2}.
\end{equation}
Consequently, using the estimates (\ref{argderivbound}), (\ref{firstderiv0case}) and (\ref{manyderiv0case}), we may apply Lemma \ref{bkyipb} with \(\alpha=k/10\), \(\beta=\Delta^2k^\epsilon/x\), \(X=x^{-3/4}k^{-5/6}\), \(U=1\), \(R=(x/m)^{1/2}\), \(Y=k(m/x)^{1/2}\) and \(Q=k\). This gives the bound
\begin{equation}\label{ibpapp1}
\mathcal I_i(0)\ll_B \Delta^2x^{-7/4}k^{-5/6+\epsilon}\Big\{\Big(\Big(\frac{x}{m}\Big)^{3/4}k^{1/2}\Big)^{-B}+\Big(\frac{x}{m}\Big)^{-B/2}\Big\},
\end{equation}
valid for any integer \(B\geq0\). 
Together with \(m\leq \Delta^2 k^\epsilon/x\), our assumption \(\Delta\leq x^{2/3}k^{1/3-\epsilon}\) implies \(1/m\geq x^{-1/3}k^{-2/3+\epsilon}\). In particular, our assumption \(x\geq k^2/(8\pi^2)\) implies
\begin{equation}\label{mbound} 
\frac{x}{m}\geq \Big(\frac{x}{k}\Big)^{2/3} k^\epsilon \geq k^{2/3}.
\end{equation}
So taking \(B\) large enough in (\ref{ibpapp1}), we easily obtain \(\mathcal I_i(0)\ll k^{-1000}\).

\item\label{ibpii}
Next, we consider the former integral appearing in (\ref{cosreplaced}):
\[\int_0^\infty G_i(y)\exp\Big\{i\Big(\frac{cn}{8\pi m}y^2+\omega\Big(\frac{c\sqrt{xt}}{\sqrt{m}}y\Big)\Big)\Big\}dy.\]
In this case one has for \(y\geq k/10\) (see (\ref{firstderiv0}) and (\ref{asympgoodone})),
\begin{equation}\label{firstderivplusbound}
\frac{d}{dy}\Big\{\frac{cn}{8\pi m}y^2+ \omega\Big(\frac{c\sqrt{xt}}{\sqrt{m}}y\Big)\Big\}=\frac{cn}{4\pi m} y+ \Big(\frac{c^2xt}{m}-\frac{\kappa^2}{y^2}\Big)^{1/2}\gg \frac{cnk}{m}+\frac{c x^{1/2}}{m^{1/2}}.
\end{equation}
Equipped with (\ref{phasederivbound}), (\ref{argderivbound}) and (\ref{firstderivplusbound}), we apply Lemma \ref{bkyipb} with \(\alpha=k/10\), \(\beta=\Delta^2k^\epsilon/x\), \(X=x^{-3/4}k^{-5/6}\), \(U=1\), \(R=cnk/m+c\sqrt{x/m}\), \(Y=cnk^2/m\) and \(Q=k\). This shows
\begin{multline}\label{ibpapp2}
\int_0^\infty G_i(y)\exp\Big\{i\Big(\frac{cn}{8\pi m}y^2+\omega\Big(\frac{c\sqrt{xt}}{\sqrt{m}}y\Big)\Big)\Big\}dy\\
\ll_B \Delta^2 x^{-7/4}k^{-5/6+\epsilon}\Big\{\Big(\frac{cnk}{m}+\frac{c\sqrt{x}}{\sqrt{m}}\Big)^{-B}+\Big(\frac{\sqrt{cn}k}{\sqrt{m}}+\frac{\sqrt{cx}}{\sqrt{n}}\Big)^{-B}\Big\},
\end{multline}
valid for any integer \(B\geq0\). We will use the bound
\begin{equation*} 
\Big(\frac{cnk}{m}+\frac{c\sqrt{x}}{\sqrt{m}}\Big)^{-B}+\Big(\frac{\sqrt{cn}k}{\sqrt{m}}+\frac{\sqrt{cx}}{\sqrt{n}}\Big)^{-B}\leq \Big(\frac{nk}{m}\Big)^{-2} \Big(\frac xm\Big)^{-(B-2)/2}+\Big(\frac{nk^2}{m}\Big)^{-2}\Big(\frac{nk^2}{m}+\frac{x}{n}\Big)^{-(B-4)/2}.
\end{equation*}
For any \(n\), we have 
\[\frac{nk^2}{m}+\frac{x}{n}\geq 2\frac{kx^{1/2}}{m^{1/2}}\geq k^{4/3},\]
since \(x/m\geq k^{2/3}\) by (\ref{mbound}). So we obtain
\[\Big(\frac{cnk}{m}+\frac{c\sqrt{x}}{\sqrt{m}}\Big)^{-B}+\Big(\frac{\sqrt{cn}k}{\sqrt{m}}+\frac{\sqrt{cx}}{\sqrt{n}}\Big)^{-B} \leq n^{-2}\Big( \frac{m^2}{k^2}\cdot k^{-(B-2)/3}+ \frac{m^2}{k^4}k^{-2(B-4)/3}\Big).\]
Thus taking \(B\) large enough in (\ref{ibpapp2}) shows
\[\int_0^\infty G_i(y)\exp\Big\{i\Big(\frac{cn}{8\pi m}y^2+\omega\Big(\frac{c\sqrt{xt}}{\sqrt{m}}y\Big)\Big)\Big\}dy \ll n^{-2}k^{-1100},\]
say, and consequently these integrals contribute \(\ll k^{-1000}\) to \(S_i\).

\item\label{ibpiii}
Finally, we consider the latter integral in (\ref{cosreplaced}): 
\[\int_0^\infty G_i(y)\exp\Big\{i\Big(\frac{cn}{8\pi m}y^2-\omega\Big(\frac{c\sqrt{xt}}{\sqrt{m}}y\Big)\Big)\Big\}dy\coloneqq \mathcal I_i'(n).\]
This is more difficult than the previous case, since now the derivative of the oscillatory phase \(F(y)\) in \(\mathcal I_i'(n)\) can vanish:
\[F'(y)=\frac{d}{dy}\Big\{\frac{cn}{8\pi m}y^2- \omega\Big(\frac{c\sqrt{xt}}{\sqrt{m}}y\Big)\Big\}=\frac{cn}{4\pi m} y- \Big(\frac{c^2xt}{m}-\frac{\kappa^2}{y^2}\Big)^{1/2}.\]
But clearly if \(n\) is large enough, \(F'(y)\) must be also be large. Indeed, suppose \(n\geq 1000\sqrt{mx}/k\). Then for \(1\leq t\leq 2\) and \(y\geq k/10\) (in the range of integration), one has
\begin{equation*} 
F'(y)=\frac{cn}{4\pi m} y- \Big(\frac{c^2xt}{m}-\frac{\kappa^2}{y^2}\Big)^{1/2}\geq \frac{cnk}{40\pi m}-\frac{c\sqrt{xt}}{\sqrt{m}} \geq \frac{cnk}{m}\Big(\frac{1}{40\pi}-\frac{\sqrt{2}}{1000}\Big) \gg \frac{cnk}{m}.
\end{equation*}
Using this estimate (and also (\ref{phasederivbound}) and (\ref{argderivbound})), we may apply Lemma \ref{bkyipb} with \(\alpha=k/10\), \(\beta=\Delta^2k^\epsilon/x\), \(X=x^{-3/4}k^{-5/6}\), \(U=1\), \(R= cnk/m\), \(Y=cnk^2/m\) and \(Q=k\).
We obtain
\begin{multline}\label{ibpapp3} 
\mathcal I_i'(n)=\int_0^\infty G_i(y)\exp\Big\{i\Big(\frac{cn}{8\pi m}y^2-\omega\Big(\frac{c\sqrt{xt}}{\sqrt{m}}y\Big)\Big)\Big\}dy\\
\ll_B \Delta^2x^{-7/4}k^{-5/6+\epsilon}\Big\{\Big(\frac{cnk^2}{m}\Big)^{-B/2}+\Big(\frac{cnk}{m}\Big)^{-B}\Big\},
\end{multline}
valid for any integer \(B\geq0\). Using (\ref{mbound}) (which states \(x/m\geq k^{2/3}\)) one has for \(n\geq 1000\sqrt{mx}/k\) that \(cnk^2/m\geq cnk/m\gg cx^{1/2}/m^{1/2}\gg k^{1/3}\). Taking \(B\) large enough, we concludes from (\ref{ibpapp3}) that \(\mathcal I_i'(n)\ll n^{-2}k^{-1100}\). Thus these \(\mathcal I_i'(n)\) with \(n\geq 1000\sqrt{mx}/k\) contribute \(\ll k^{-1000}\) to \(S_i\).

\item\label{ibpiv} 
Finally, we show that the integrals \(\mathcal I_i'(n)\) are also negligible when \(n\) is very small. Indeed, if \(n\leq m^{1/2}x^{3/2}/(\Delta^2k^\epsilon)\) then using (\ref{asympgoodone}) and our assumption \(y\leq \Delta^2 k^\epsilon/x\) we obtain
\begin{equation*}
|F'(y)|\geq \Big|\Big(\frac{c^2xt}{m}-\frac{\kappa^2}{y^2}\Big)^{1/2}-\frac{cn}{4\pi m} y\Big|
\geq \frac12 \frac{cx^{1/2}t^{1/2}}{m^{1/2}}-\frac{cn}{4\pi m}\frac{\Delta^2k^\epsilon}{x}\geq \frac{cx^{1/2}}{m^{1/2}} \Big(\frac12-\frac{1}{4\pi}\Big)\gg c\Big(\frac{x}{m}\Big)^{1/2}.
\end{equation*}
Using this estimate together with (\ref{phasederivbound}) and (\ref{argderivbound}) as before, we are able to apply Lemma \ref{bkyipb} with \(\alpha=k/10\), \(\beta=\Delta^2k^\epsilon/x\), \(X=x^{-3/4}k^{-5/6}\), \(U=1\), \(R=c\sqrt{x/m}\), \(Y=cnk^2/m\) and \(Q=k\). This provides the bound
\begin{equation}\label{i'boundlastone}
\mathcal I_i'(n)\ll_B \Delta^2 x^{-7/4}k^{-5/6+\epsilon}\Big\{\Big(\frac{cx}{n}\Big)^{-B/2}+\Big(\frac {c^2x}{m}\Big)^{-B/2}\Big\},
\end{equation}
valid for any integer \(B\geq0\). Since \(m\leq \Delta^2 k^\epsilon/x\), we have
\[n\leq \frac{m^{1/2}x^{3/2}}{\Delta^2 k^{\epsilon}}\implies \frac{cx}{n}\geq 
\Delta k^{\epsilon/2}.\]
Because \(c^2x/m\geq k^{2/3}\) is also large (see (\ref{mbound})), taking \(B\) sufficiently large in (\ref{i'boundlastone}) shows that those \(\mathcal I_i'(n)\) with \(n\leq m^{1/2}x^{3/2}/(\Delta^2k^\epsilon)\) contribute \(\ll k^{-1000}\) to \(S_i\).
\end{enumerate}
\end{proof}

\begin{remark}
We record the following useful facts from the above proof. Firstly, we established in (\ref{argderivbound}) that \(G_1\) and \(G_2\) satisfy
\begin{equation} \label{argderivboundrep}
G_i^{(j)}(y)\ll_j x^{-3/4}k^{-5/6} \text{ for } j=0,1,2\ldots
\end{equation}
Considering the phase \(F(y)\), we showed (see (\ref{firstderiv0}) and (\ref{secondderiv0}))
\begin{equation}\label{F'}
F'(y)=\frac{cn}{4\pi m} y- \Big(\frac{c^2xt}{m}-\frac{\kappa^2}{y^2}\Big)^{1/2},
\end{equation}
and
\begin{equation}\label{F''really}
F''(y)=\frac{cn}{4\pi m}-\frac{\kappa^2}{y^3} \Big(\frac{c^2xt}{m}-\frac{\kappa^2}{y^2}\Big)^{-1/2}.
\end{equation}
Finally, we showed (see (\ref{secondderivasymp}) and (\ref{phasederivbound})) that under the condition \(\Delta \leq x^{2/3}k^{1/3-\epsilon}\), for \(k/10\leq y\leq \Delta^2 k^\epsilon/x\) in the region of integration 
\begin{equation}\label{Fderivs}
F''(y)\asymp \frac{cn}{m}, \text{ and } F^{(j)}(y)\ll_j \frac{cn}{m} y^{2-j} \text{ for } j=2,3,\ldots.
\end{equation}
\end{remark}

It remains to bound the contribution of the \(\mathcal I_i'(n)\) with \(m^{1/2}x^{3/2}/(\Delta^2 k^\epsilon)\leq n\leq 1000\sqrt{mx}/k\). These integrals are not negligible, since in this case a larger contribution appears from the stationary phase. The analysis is therefore more involved.

Let \(y_0=y_0(n)\) be the stationary phase of \(F\) in the region of integration, i.e. \(y_0\) satisfies \(F'(y_0)=0\) and \(y_0\in [k/10, \Delta^2k^\epsilon/x].\) 
Solving \(F'(y_0)=0\) (see (\ref{F'})) shows
\begin{equation*} 
y_0^2
=
\frac{8\pi^2mxt}{n^2}\Big(1\pm \Big(1-\frac{\kappa^2n^2}{4\pi^2c^2x^2t^2}\Big)^{1/2}\Big)
=
\frac{8\pi^2mxt}{n^2}\Big(1\pm 1 +\mathcal O\Big(\frac{k^2 n^2}{x^2}\Big)\Big).
\end{equation*}
(The last step follows from a Taylor expansion and the facts \(1\leq t\leq 2\) and \(c\geq1\).)
Only one solution (corresponding to \(\pm=+\)) in the above is possible. 
Indeed, our assumptions \(m\leq \Delta^2 k^\epsilon/x\) and \(\Delta\leq x^{1-\epsilon}\) imply 
\begin{equation*}
\frac{8\pi^2mxt}{n^2}\Big(1- 1 +\mathcal O\Big(\frac{k^2 n^2}{x^2}\Big)\Big)
\ll
\frac{k^2 m}{x} =o(k^2).
\end{equation*}
But we take \(y_0\in[k/10, \Delta^2 k^\epsilon/x]\), so \(y_0^2\neq o(k^2)\). Thus
\[y_0^2=\frac{16\pi^2 mxt}{n^2}+\mathcal O\Big(\frac{k^2 m}{x}\Big) \iff y_0=\frac{4\pi\sqrt{mxt}}{n}+\mathcal O\Big(\frac{k^2nm^{1/2}}{x^{3/2}}\Big).\]
We bound the error term above by using that \(m\leq \Delta^2 k^\epsilon/x\), and the assumption \(n\leq 1000\sqrt{mx}/k\). From this, we conclude
\begin{equation}\label{y0def}
y_0= \frac{4\pi\sqrt{mxt}}{n}+\mathcal O\Big(\frac{k^{1+\epsilon}\Delta^2}{x^2}\Big).
\end{equation}

We will show that the only significant contribution to the integrals \(\mathcal I_i'(n)\) comes from a neighbourhood of \(y_0\). 
To do so, we use a smooth partition of unity to split the range of integration in \(\mathcal I_i'(n)\) into intervals surrounding the stationary point \(y_0\).
From now on, set \(L=L(n)=k^\epsilon\max\{1,m/(cn)\}\), and let \((b_l^{L,y_0})_{l\in\mathbb{Z}}\) be the smooth functions given in Lemma \ref{smthpart}.
Then
\begin{multline}\label{smthpartapplied}
\mathcal I_i'(n)=\int_0^\infty \Big(\sum_{l\in\mathbb{Z}}b_l^{L,y_0}(y)\Big)G_i(y)e^{iF(y)}dy=\int_{y_0-2L}^{y_0+2L}b_0^{L,y_0}(y)G_i(y)e^{iF(y)}dy\\
+\sum_{l\geq 1} \int_{y_0+2^{l-1}L}^{y_0+2^{l+1}L}b_l^{L,y_0}(y)G_i(y)e^{iF(y)}dy+\sum_{l\leq-1}\int_{y_0-2^{|l|+1}L}^{y_0-2^{|l|-1}L} b_l^{L,y_0}(y)G_i(y)e^{iF(y)}dy.
\end{multline}
In the following lemma, we show that all but the \(l=0\) term is negligible.

\begin{lemma}\label{Ii'nlem}
Let \(L\) be as above, and assume \(\Delta\leq x^{2/3}k^{1/3-\epsilon}\). Then for \(i=1,2\) we have
\begin{equation*}
\mathcal I_i'(n)=\int_{y_0-2L}^{y_0+2L}b_0^{L,y_0}(y)G_i(y)e^{iF(y)}dy+\mathcal O(k^{-1000}).
\end{equation*}
\end{lemma}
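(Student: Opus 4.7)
The plan is to apply the integration-by-parts estimate of Lemma \ref{bkyipb} to each piece in the smooth partition (\ref{smthpartapplied}) with $l\neq 0$, showing that each such piece contributes $\ll k^{-1100}$, say, so that the (effectively finite) sum over $l\neq 0$ is $\mathcal O(k^{-1000})$. Only the central piece $l=0$, which genuinely sees the stationary point $y_0$, survives.

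The linchpin is a lower bound for $|F'|$ on the support of $b_l^{L,y_0}$ when $l\neq 0$. Since $F'(y_0)=0$ by construction and (\ref{Fderivs}) gives $F''(y)\asymp cn/m$ uniformly (with constant sign, as the second summand in (\ref{F''really}) was shown to be $o(cn/m)$ in (\ref{littleonicer})), integrating $F''$ from $y_0$ to $y$ yields $|F'(y)|\asymp (cn/m)|y-y_0|$. For $y\in\supp b_l^{L,y_0}$ with $|l|\geq 1$ we have $|y-y_0|\asymp 2^{|l|}L$, and the choice $L=k^\epsilon\max\{1,m/(cn)\}$ is precisely engineered so that
\[
|F'(y)|\gtrsim \frac{cn}{m}\cdot 2^{|l|}L = 2^{|l|}k^\epsilon \max\Big\{\frac{cn}{m},\,1\Big\}\geq 2^{|l|}k^\epsilon
\]
regardless of whether $cn/m$ is $\geq 1$ or $<1$. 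For the weight, combining (\ref{argderivboundrep}) with property \ref{unityiii} of Lemma \ref{smthpart} via Leibniz (and using $L\geq k^\epsilon\geq 1$) gives $(b_l^{L,y_0}G_i)^{(j)}(y)\ll_j x^{-3/4}k^{-5/6}$ for every $j\geq 0$, and on $\supp b_l^{L,y_0}$ one has $y\asymp y_0$, so the higher derivative estimates (\ref{Fderivs}) become $F^{(j)}(y)\ll_j (cn/m)y_0^{2-j}$.

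Taking $X=x^{-3/4}k^{-5/6}$, $U=1$, $Y=(cn/m)y_0^2$, $Q=y_0$, and $R$ equal to the sharper lower bound $2^{|l|}k^\epsilon\max\{cn/m,1\}$, a short case analysis on $cn/m \lessgtr 1$ shows both $RU$ and $QR/Y^{1/2}=R\sqrt{m/(cn)}$ are $\gtrsim 2^{|l|}k^\epsilon$. Lemma \ref{bkyipb} then gives, for any large integer $B$,
\[
\int b_l^{L,y_0}(y)G_i(y)e^{iF(y)}\,dy \ll_B (2^{|l|}L)\,x^{-3/4}k^{-5/6}(2^{|l|}k^\epsilon)^{-B}.
\]
Since $G_i$ vanishes outside $[k/10,10\Delta^2 k^\epsilon/x]$, only $O(\log k)$ values of $l\neq 0$ contribute non-trivially; summing the resulting geometric series and choosing $B$ large enough (the constant depending on the standing upper bounds on $x$ and $\Delta$ in terms of $k$) delivers the claimed $\mathcal O(k^{-1000})$.

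The only substantive point is the lower bound on $|F'|$ and its matching with the scale $L$; every other step is a direct invocation of estimates already developed in the proofs of Lemmas \ref{poissonlem2} and \ref{s12'lem}. The design of $L$ is the heart of the matter, as it arranges for both savings factors $RU$ and $QR/Y^{1/2}$ in Lemma \ref{bkyipb} to grow uniformly like $2^{|l|}k^\epsilon$, which is what allows IBP to produce arbitrary polynomial savings in $k$.
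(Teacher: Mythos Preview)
Your proposal is correct and follows essentially the same approach as the paper: both apply Lemma \ref{bkyipb} to each piece $l\neq 0$ in (\ref{smthpartapplied}), using the same derivative bounds (\ref{argderivboundrep}) and (\ref{Fderivs}) together with $|F'(y)|\asymp (cn/m)|y-y_0|\asymp 2^{|l|}(cn/m)L$ on $\supp b_l^{L,y_0}$, and both exploit the choice $L=k^\epsilon\max\{1,m/(cn)\}$ to force $RU$ and $QR/Y^{1/2}$ to be $\gg 2^{|l|}k^\epsilon$. Your parameters $Q=y_0$, $Y=(cn/m)y_0^2$ differ from the paper's $Q=\alpha$, $Y=(cn/m)\alpha^2$ only cosmetically, since $\alpha\asymp y_0$ on the relevant supports by (\ref{Lllk}) and (\ref{y0exp}).
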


\begin{proof}
For \(i=1,2\), we apply Lemma \ref{bkyipb} to bound the \(l\neq0\) terms appearing in (\ref{smthpartapplied}), which are:
\[\int_0^\infty b_l^{L,y_0}(y)G_i(y)e^{iF(y)}dy.\]
From the estimate \(\frac{d^j}{dy^j} b_l^{L,y_0}(y)\ll_j 2^{-j|l|}L^{-j}\) (given in Lemma \ref{smthpart}) and (\ref{argderivboundrep}), we deduce 
\begin{equation}\label{argderivs2}
\frac{d^j}{dy^j}\{b_l^{L,y_0}(y)G_i(y)\}\ll_j x^{-3/4}k^{-5/6}, \text{ for } j=0,1,2,\ldots
\end{equation}
From (\ref{Fderivs}), we also have 
\begin{equation}\label{phasederivs2}
F^{(j)}(y)\ll_j \frac{cn}{m}y^{2-j} \text{ for } j=2,3,\ldots, \text{ and } F''(y)\asymp \frac{cn}{m}\implies F'(y)\asymp \frac{cn}{m}|y-y_0|.
\end{equation}
Equipped with (\ref{argderivs2}) and (\ref{phasederivs2}), and noting that \(|y-y_0|\asymp 2^{|l|}L \) for \(y\in\supp b_l^{L,y_0}\) (provided \(l\neq0\)), we are ready to apply Lemma \ref{bkyipb}. We take \([\alpha, \beta]\) to be the interval \(\supp b_l^{L,y_0}\) (so that \(\beta-\alpha \asymp 2^{|l|}L\)), and take \(X=x^{-3/4}k^{-5/6}\), \(U=1\), \(R=cn 2^{|l|}L/m\), \(Y=cn\alpha^2/m\) and \(Q=\alpha\) (where \(\alpha\asymp \beta\) is the infimum of \(\supp b_l^{L,y_0}\)). This yields
\begin{equation*}
\int_0^\infty b_l^{L,y_0}(y)G_i(y)e^{iF(y)}dy\ll_B 2^{|l|}L x^{-3/4}k^{-5/6} \Big\{2^{-|l|B}\Big(\Big(\frac{cn}{m}L^2\Big)^{-B/2}+\Big(\frac{cn}{m}L\Big)^{-B}\Big)\Big\},
\end{equation*}
valid for any non-negative integer \(B\). But with the choice of \(L=k^\epsilon\max\{1, m/(cn)\}\), one always has \(cnL^2/m>cnL/m\geq k^\epsilon\). So by taking \(B\) large enough, the contribution of the integrals with \(l\neq0\) to (\ref{smthpartapplied}) is seen to be \(\ll k^{-1000}\).
\end{proof}

\begin{remark}
From now on, we will denote \(b=b_0^{L,y_0}\) for simplicity.
\end{remark}

Our final task is to bound the integrals appearing in Lemma \ref{Ii'nlem}. We consider three cases separately, based on the value of \(n\), or equivalently \(y_0(n)\). The first (and easiest) is the case where the region of integration \([y_0(n)-2L,y_0(n)+2L]\) lies in the region in which the Bessel function \(J_{k-1}(y)\) is negligibly small. Secondly, we consider the case in which \(y_0(n)\) is close to the transition of the Bessel function. The final case is that in which the region of integration lies within the oscillatory regime of the Bessel function.

First recall \(L=k^\epsilon\max\{1,m/(cn)\}\). For \(n\geq m^{1/2}x^{3/2}/(\Delta^2k^\epsilon)\) in the range of summation of Lemma \ref{s12'lem}, using \(m\leq \Delta^2 k^\epsilon/x\) and assuming \(\Delta\leq x^{2/3}k^{1/3-\epsilon}\) we find
\begin{equation}\label{Lllk}
L=L(n)\leq k^\epsilon\Big(1+\frac{m}{cn}\Big)\ll k^{1-\epsilon/2}.
\end{equation}
We also remark that assuming \(\Delta\leq x^{2/3}k^{1/3-\epsilon}\) (and using \(x\geq k^2/(8\pi^2)\)), the error term in (\ref{y0def}) satisfies \(k^{1+\epsilon}\Delta^2 /x^2 \ll k^{1/3-\epsilon}\). Thus for \(\Delta\leq x^{2/3}k^{1/3-\epsilon}\) and \(n\leq 1000\sqrt{mx}/k\), (\ref{y0def}) shows
\begin{equation}\label{y0exp}
y_0=y_0(n)=\frac{4\pi\sqrt{mxt}}{n}+\mathcal O(k^{1/3-\epsilon})\sim \frac{4\pi \sqrt{mxt}}{n}\gg k.
\end{equation}
In particular, combining (\ref{Lllk}) and (\ref{y0exp}) shows that for \(y_0-2L\leq y\leq y_0+2L\) in the region of integration of \(\mathcal I_i'(n)\), one has \(y=y_0+\mathcal O(L)\asymp y_0\).

\begin{lemma}\label{blomerexplem}
Assume \(\Delta\leq x^{2/3}k^{1/3-\epsilon}\). For \(m^{1/2}x^{3/2}/(\Delta^2 k^\epsilon)\leq n\leq 1000\sqrt{mx}/k\) and \(i=1,2\) we have the following bounds.

\begin{enumerate}[label=(\roman*)]
\item \label{iini} 
If \(y_0(n)+2L\leq k-k^{1/3+\epsilon}\), then \(\mathcal I_i'(n)\ll e^{-k^\epsilon}\).

\item \label{iinii} 
If \(y_0(n)+2L\geq k-k^{1/3+\epsilon}\) and \(y_0(n)-10L\leq k+k^{1/3+\epsilon}\), then
\begin{equation*}
\mathcal I_i'(n)\ll x^{-3/4}k^{-5/6+\epsilon}+x^{-5/4}k^{1/6+\epsilon}c^{-1}m^{1/2}.
\end{equation*}

\item \label{iiniii}
If \(y_0(n)-10L\geq k+k^{1/3+\epsilon}\), then
\begin{equation*}
\mathcal I_i'(n)\ll x^{-9/8}k^{-1/4}c^{-1/2}m^{1/8}n^{1/2}\Big(\frac{4\pi\sqrt{mxt}}{k}-n\Big)^{-1/4}+x^{-5/8}k^{-9/4}m^{1/8}n^2\Big(\frac{4\pi\sqrt{mxt}}{k}-n\Big)^{-9/4}.
\end{equation*}
\end{enumerate}
\end{lemma}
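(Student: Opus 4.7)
The three cases correspond to the three regimes of the Bessel factor $J_\kappa(y)$ inside $G_i$, distinguished by where the support $[y_0-2L, y_0+2L]$ of the cut-off $b$ sits relative to the transition at $y = k$. Throughout the support, $y \asymp y_0 \gg k$ by (\ref{Lllk}) and (\ref{y0exp}), so the relevant regime is determined by $y_0$ alone.

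Parts \ref{iini} and \ref{iinii} follow from trivial pointwise bounds on the integrand. For \ref{iini}, the case hypothesis places the full support in the regime where (\ref{bessel1}) gives $J_\kappa(y) \ll e^{-k^\epsilon}$; factoring this out of $G_i$, the estimate $|\mathcal{I}_i'(n)| \leq (\text{length}) \cdot \sup |G_i|$ yields the claim once the polynomial loss is absorbed into the exponential. For \ref{iinii}, I use instead the universal bound $J_\kappa \ll k^{-1/3}$ of (\ref{bessel2}) to obtain $|G_i| \ll x^{-3/4} k^{-5/6}$ exactly as in (\ref{argderivboundrep}), so $|\mathcal{I}_i'(n)| \leq 4L \cdot \sup |G_i|$. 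Splitting $L \leq k^\epsilon(1 + m/(cn))$ gives the first stated term directly, and for the second, the two case hypotheses together with $L \ll k^{1-\epsilon/2}$ pin $y_0 \asymp k$; the relation $y_0 \sim 4\pi\sqrt{mxt}/n$ from (\ref{y0exp}) then forces $n \asymp \sqrt{mx}/k$, and substituting converts $k^\epsilon m/(cn) \cdot x^{-3/4} k^{-5/6}$ into the required $x^{-5/4} k^{1/6+\epsilon} c^{-1} m^{1/2}$.

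Part \ref{iiniii} is the core of the argument. Since the support of $b$ lies entirely in the oscillatory regime $y \geq k + k^{1/3+\epsilon}$, I substitute the asymptotic (\ref{bessel3}) for $J_\kappa(y)$ and expand $\cos\omega(y)$ and $\sin\omega(y)$ by Euler's formula, decomposing $\int b\, G_i\, e^{iF}\,dy$ into four oscillatory integrals with phases $\Phi_\pm(y) = F(y) \pm \omega(y)$ (two from the leading $(y^2-\kappa^2)^{-1/4}$ amplitude of the Bessel expansion, two from the secondary correction), together with a negligible remainder from the $y^4/(y^2-\kappa^2)^{13/4}$ error. Each $\Phi_\pm$ has a unique critical point $y_\pm^* \approx y_0 \pm (m/(cn))\omega'(y_0)$ strictly inside $[y_0 - 2L, y_0 + 2L]$, and at each critical point $\Phi_\pm''(y_\pm^*) \asymp cn/m$: this holds because the assumption $\Delta \leq x^{2/3}k^{1/3-\epsilon}$ forces $m \ll x^{2/3}$, so $F''$ dominates $\omega''$ throughout the support, and the derivative estimates of (\ref{Fderivs}) carry over verbatim to $\Phi_\pm$. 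I then apply Lemma \ref{bkyexpansion} to each of the four integrals with $Q = y_0$, $Y = cny_0^2/m$, $V = L$, and $X$ equal to the pointwise size of the modified amplitude at $y_\pm^*$. For the leading piece, direct substitution into (\ref{G1}) gives $G_i/J_\kappa \asymp x^{-1}m^{-1/4}n^{1/2}$ at $y_0$, so $X \asymp x^{-1}m^{-1/4}n^{1/2}(y_0^2-\kappa^2)^{-1/4}$, and the output $QX/Y^{1/2}$ rearranges via $y_0 \asymp 4\pi\sqrt{mxt}/n$ and the factorisation $y_0^2-\kappa^2 \asymp y_0\cdot(k/n)(4\pi\sqrt{mxt}/k - n)$ into precisely the first claimed term. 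The secondary piece is treated identically, with the extra singular factor $(y^2-\kappa^2)^{-1/2}$ in its amplitude propagating to $(4\pi\sqrt{mxt}/k - n)^{-9/4}$, yielding the second claimed term.

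The main technical obstacle lies in verifying the smoothness hypothesis $\rho^{(j)} \ll XV^{-j}$ of Lemma \ref{bkyexpansion}. Each derivative of $(y^2-\kappa^2)^{-1/4}$ loses a factor of $(y-k)^{-1}$, which must be absorbed into $V^{-1} = L^{-1}$; this demands $L \leq y_0 - k$, and is precisely what the hypothesis $y_0 - 10L \geq k + k^{1/3+\epsilon}$ enforces. The additional $k^{1/3+\epsilon}$ cushion supplies the power savings required to verify the second condition $VY^{1/2}/Q \geq Z^\eta$ of Lemma \ref{bkyexpansion}.
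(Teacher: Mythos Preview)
Your treatment of parts \ref{iini} and \ref{iinii} matches the paper exactly. For part \ref{iiniii}, the overall strategy (insert the Bessel asymptotic, then apply stationary phase to the resulting integrals) is also the paper's, but two specific steps do not go through as you describe.

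First, your choice of stationary-phase parameters $Q=y_0$, $Y=cny_0^2/m$ does not satisfy the hypothesis $\Phi_\pm^{(j)}\ll YQ^{-j}$ of Lemma~\ref{bkyexpansion} for $j\geq 3$. While $F^{(j)}(y)\ll (cn/m)y_0^{2-j}$ holds by (\ref{Fderivs}), the added phase $\omega(y)$ does \emph{not} inherit this: differentiating $\omega''(y)=\kappa^2 y^{-2}(y^2-\kappa^2)^{-1/2}$ produces factors of $(y-k)^{-1}$, not $y^{-1}$, so for instance $\omega'''(y)\asymp k^2 y_0^{-5/2}(y_0-k)^{-3/2}$. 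When $y_0-k\asymp k^{1/3+\epsilon}$ this is of size $k^{-1-3\epsilon/2}$, which can exceed $(cn/m)y_0^{-1}$ by a factor as large as $k^{2/3-\epsilon}$. The paper avoids this by taking $Q=y_0-k$ and $Y=(cn/m)(y_0-k)^2$; the output bound $QX/Y^{1/2}=X(m/(cn))^{1/2}$ is unchanged, but now the required estimate $\Phi_\pm^{(j)}\ll (cn/m)(y_0-k)^{2-j}$ actually holds (see (\ref{Fderivsbigthan2})--(\ref{tildeFhighderiv})). Your remark that ``the derivative estimates of (\ref{Fderivs}) carry over verbatim to $\Phi_\pm$'' is therefore incorrect.

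Second, you misattribute the origin of the second displayed term in part \ref{iiniii}. That term, with the singular factor $(4\pi\sqrt{mxt}/k-n)^{-9/4}$, comes from the \emph{trivial} bound on the Bessel remainder $y^4(y^2-\kappa^2)^{-13/4}$ integrated over the support of $b$ (yielding $x^{-3/4}y_0^{1/4}(y_0-k)^{-9/4}$), not from stationary phase applied to the secondary amplitude. The secondary piece of (\ref{bessel3}), after stationary phase, is smaller than the leading contribution by a factor $\ll\kappa^2(y_0^2-\kappa^2)^{-3/2}\ll 1$ and is simply absorbed into the first term; in the paper it is folded into $G_{i\pm}$ from the outset. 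Your assertion that the Bessel remainder is ``negligible'' is thus wrong: it is precisely what produces the second term of the lemma.
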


\begin{proof}
Parts \ref{iini} and \ref{iinii} are relatively simple. For both of these, we use the trivial bound for the integrals \(\mathcal I_i'(n)\) (see the expression given in Lemma \ref{Ii'nlem}), which shows
\begin{equation}\label{trivialboundIi'n} 
\mathcal I_i'(n)\leq \int_{y_0-2L}^{y_0+2L} |b(y) G_i(y)|dy +\mathcal O(k^{-1000}) \ll L \max_{y\in[y_0-2L,y_0+2L]} |G_i(y)|+k^{-1000}.
\end{equation}
It follows from the definitions (\ref{G1}) and (\ref{G2}) (cf. (\ref{argderivboundrep})) that for \(i=1,2\) and \(y\in[y_0-2L, y_0+2L]\) (these \(y\) satisfy \(y\asymp y_0\gg k\), see (\ref{Lllk}) and (\ref{y0exp})),
\begin{equation}\label{simpleGibound}
G_i(y)\ll x^{-3/4}k^{-1/2}|J_{k-1}(y)|.
\end{equation}

For part \ref{iini}, (\ref{bessel1}) shows \(J_{k-1}(y)\ll e^{-k^\epsilon}\) for \(y\leq y_0+2L\leq k-k^{1/3+\epsilon}\). So we conclude from (\ref{trivialboundIi'n}) and (\ref{simpleGibound}) that
\[\mathcal I_i'(n)\ll L x^{-3/4} k^{-1/2} e^{-k^\epsilon}\ll e^{-k^\epsilon},\]
as claimed.

For part \ref{iinii}, we instead use the bound \(J_{k-1}(y)\ll k^{-1/3}\) given in (\ref{bessel2}) (which is sharp in the transition regime). In this case, (\ref{trivialboundIi'n}) and (\ref{simpleGibound}) therefore show
\begin{equation}\label{trivialboundIi'}
\mathcal I_i'(n)\ll x^{-3/4}k^{-5/6}L\ll x^{-3/4}k^{-5/6+\epsilon}\Big(1+\frac{m}{cn}\Big).
\end{equation}
Using that \(y_0\asymp \sqrt{mx}/n\) from (\ref{y0exp}) and \(L\ll k^{1-\epsilon/2}=o(y_0)\) from (\ref{Lllk}), we have
\[y_0-10L\leq k+k^{1/3+\epsilon}\implies \frac{\sqrt{mx}}{n}\ll k \implies \frac{m}{cn}\ll \frac{km^{1/2}}{cx^{1/2}}.\]
So the required bound follows from (\ref{trivialboundIi'}) and the preceding estimate. 

Part \ref{iiniii} corresponds to the case where the region of integration is entirely contained in the oscillatory regime of the Bessel function \(J_{k-1}(y)\), and is considerably more involved. We first replace the Bessel functions by the asymptotic (\ref{bessel3}) of Lemma \ref{bessellem}, which is available in this regime. For \(y\geq k+k^{1/3}\), (\ref{bessel3}) gives
\begin{multline}\label{truncatedbessasymp}
J_{k-1}(y)=
\frac{1}{\sqrt{2\pi}}(y^2-\kappa^2)^{-1/4}\Bigg\{\Big(1+\frac{1}{8i}(y^2-\kappa^2)^{-1/2}+\frac{5}{24i}\kappa^2(y^2-\kappa^2)^{-3/2}\Big)e^{i\omega(y)}\\
+\Big(1-\frac{1}{8i}(y^2-\kappa^2)^{-1/2}-\frac{5}{24i}\kappa^2(y^2-\kappa^2)^{-3/2}\Big)e^{-i\omega(y)}\Bigg\}+\mathcal O\Big(\frac{y^4}{(y^2-k^2)^{13/4}}\Big).
\end{multline}
Replacing this in the expressions (\ref{G1}) and (\ref{G2}) for \(G_1\) and \(G_2\) (given in Lemma \ref{poissonlem2}), we have from Lemma \ref{Ii'nlem} that
\begin{multline}\label{Ii'nexp3}
\mathcal I_i'(n)= \int_{y_0-2L}^{y_0+2L}b(y)G_i(y)e^{iF(y)}dy+\mathcal O(k^{-1000})\\
=\int_{y_0-2L}^{y_0+2L} G_{i+}(y)e^{i(F(y)+\omega(y))}dy+\int_{y_0-2L}^{y_0+2L} G_{i-}(y)e^{i(F(y)-\omega(y))}dy\\
+\mathcal O\Big(\int_{y_0-2L}^{y_0+2L}|b(y)g(y) x^{-3/4} y^{-1/2}|\cdot y^4 (y^2-k^2)^{-13/4}dy\Big)+\mathcal O(k^{-1000}),
\end{multline}
where the functions \(G_{i\pm}\) are given by
\begin{align*}
G_{1\pm}(y)&=\frac{1}{\sqrt{2\pi}}c^{3/2}m^{-3/4}b(y)g(y)y\Big(\frac{c^2xt}{m}y^2-\kappa^2\Big)^{-3/4}(y^2-\kappa^2)^{-1/4}\\
&\hspace{16em}\Big(1\pm\frac{1}{8i}(y^2-\kappa^2)^{-1/2}\pm\frac{5}{24i}\kappa^2(y^2-\kappa^2)^{-3/2}\Big),\\
G_{2\pm}(y)&=\frac{1}{\sqrt{2\pi}}c^{7/2}m^{-7/4}xb(y)g(y)y^3\Big(\frac{c^2xt}{m}y^2-\kappa^2\Big)^{-7/4}(y^2-\kappa^2)^{-1/4}\\
&\hspace{16em}\Big(1\pm\frac{1}{8i}(y^2-\kappa^2)^{-1/2}\pm\frac{5}{24i}\kappa^2(y^2-\kappa^2)^{-3/2}\Big).
\end{align*}

We now bound the error term in (\ref{Ii'nexp3}). Since the smoothing functions \(b\) and \(g\) are bounded (by their construction), this is
\begin{multline}\label{errorboundmess} 
\int_{y_0-2L}^{y_0+2L}|b(y)g(y) x^{-3/4}y^{-1/2} |\cdot y^4 (y^2-k^2)^{-13/4}dy\\
\ll x^{-3/4} \int_{y_0-2L}^{y_0+2L} y^{7/2}(y+k)^{-13/4}(y-k)^{-13/4}dy
\ll x^{-3/4}y_0^{1/4} (y_0-2L-k)^{-9/4}.
\end{multline}
In the last step, we used that for \(y\in[y_0-2L, y_0+2L]\) and \(n\) in the range \(m^{1/2}x^{3/2}/(\Delta^2 k^\epsilon)\leq n\leq 1000\sqrt{mx}/k\), we have \(y\asymp y+k\asymp y_0\) (see (\ref{Lllk}) and (\ref{y0exp})). Additionally, (\ref{y0exp}) shows
\[\Big|(y_0-2L-k)-\Big(\frac{4\pi\sqrt{mxt}}{n}-k\Big)\Big|\leq 2L +\Big|y_0-\frac{4\pi\sqrt{mxt}}{n}\Big|\leq 2L+\mathcal O(k^{1/3-\epsilon}).\]
Since for part \ref{iiniii} we assume \(y_0-2L-k\geq 8L +k^{1/3+\epsilon}\), it follows \((y_0-2L-k)\asymp (4\pi\sqrt{mxt}/n-k)\). Consequently (\ref{errorboundmess}) is 
\[\ll x^{-3/4}y_0^{1/4}\Big(\frac{4\pi\sqrt{mxt}}{n}-k\Big)^{-9/4}\ll x^{-5/8}k^{-9/4}m^{1/8}n^2\Big(\frac{4\pi\sqrt{mxt}}{k}-n\Big)^{-9/4}.\]
(We also used \(y_0\asymp \sqrt{mx}/n\), see (\ref{y0exp}).) Replacing this error bound in (\ref{Ii'nexp3}), we have established
\begin{multline}\label{Ii'nexp4} 
\mathcal I_i'(n)= \int_{y_0-2L}^{y_0+2L} G_{i+}(y)e^{i(F(y)+\omega(y))}dy+\int_{y_0-2L}^{y_0+2L} G_{i-}(y)e^{i(F(y)-\omega(y))}dy\\
+\mathcal O\Big(x^{-5/8}k^{-9/4}m^{1/8}n^2\Big(\frac{4\pi\sqrt{mxt}}{k}-n\Big)^{-9/4}\Big).
\end{multline}

The final task is to bound the oscillatory integrals appearing in (\ref{Ii'nexp4}). This will be done using Lemma \ref{bkyexpansion}. In order to apply this, we first require a bound for the derivatives of the functions \(G_{i\pm}\). This follows relatively straightforwardly from the (above) definitions. Indeed, firstly one recalls that for any \(j\geq0\), we have \(b^{(j)}(y)\ll_j L^{-j}\) and \(g^{(j)}(y)\ll_j y^{-j}\) (by construction). Secondly, if \(y\in[y_0-2L, y_0+2L]\) then \(y\asymp y_0\) (see (\ref{Lllk}) and (\ref{y0exp})). For these \(y\), the bound \(c^2xt/m\geq x/m\geq k^{2/3}\) (valid for \(m\leq \Delta^2k^\epsilon/x\), \(c\geq1\), \(1\leq t\leq 2\) and \(\Delta\leq x^{2/3}k^{1/3-\epsilon}\), see (\ref{mbound})) also shows \((c^2xty^2/m-\kappa^2)\asymp c^2x y_0^2/m\). For \(y\in[y_0-2L, y_0+2L]\) and \(j\geq0\), one now computes
\begin{multline*} 
\frac{d^j}{dy^j} \Big(\frac{c^2 xt}{m} y^2-\kappa^2\Big)^{-3/4}\ll_j y^{-j}\Big(\frac{c^2 xt}{m} y^2-\kappa^2\Big)^{-3/4}\ll_j \Big(\frac{c^2 x}{m}\Big)^{-3/4} y_0^{-3/2-j},\\ 
\text{ and similarly } \frac{d^j}{dy^j} \Big(\frac{c^2 xt}{m} y^2-\kappa^2\Big)^{-7/4}\ll_j y^{-j}\Big(\frac{c^2 xt}{m} y^2-\kappa^2\Big)^{-7/4}\ll_j \Big(\frac{c^2x}{m}\Big)^{-7/4} y_0^{-7/2-j}.
\end{multline*}
Finally, for \(y\in[y_0-2L, y_0+2L]\) and \(j\geq0\) we have
\[\frac{d^j}{dy^j} (y^2-\kappa^2)^{-1/4} 
\ll_j y_0^{-1/4}(y_0-k)^{-1/4-j}.\]
Here we used that \(y-\kappa \asymp y_0-k\), which follows from our assumption \(y_0-k\geq 10L +k^{1/3+\epsilon}\) and the fact that \(|y-y_0|\leq 2L\) in this range of \(y\). The above calculations show that upon differentiating \(G_{i\pm}\), we save at least \(L\) each time (note \(L\ll y_0-k\) by our assumption \(y_0-k\geq 10L+k^{1/3+\epsilon}\)). In other words (assuming \(\Delta\leq x^{2/3}k^{1/3-\epsilon}\)) we obtain for \(y\in[y_0-2L,y_0+2L]\) and integers \(j\geq0\) the bound 
\begin{equation}\label{tildegderivs}
\frac{d^j}{dy^j}G_{i\pm}(y)\ll_j x^{-3/4}y_0^{-3/4}(y_0-k)^{-1/4}L^{-j}.
\end{equation}

We next turn our attention to the new oscillatory phase. Set
\[F_\pm(y)=F(y)\pm\omega(y).\]
It turns out that despite the extra \(\omega(y)\) term, \(F_\pm\) behaves essentially the same as \(F\) on the interval \([y_0-2L,y_0+2L]\). Indeed, by (\ref{omega'}) one has
\begin{equation}\label{tildeF'}
F_\pm'(y)=F'(y)\pm\Big(1-\frac{\kappa^2}{y^2}\Big)^{1/2}=F'(y)+\mathcal O(1),
\end{equation}
Furthermore, by (\ref{omega''})
\begin{equation}\label{tildeF''} 
F_\pm''(y)=F''(y)\pm\frac{\kappa^2}{y^2}(y^2-\kappa^2)^{-1/2}.
\end{equation}
Recall from (\ref{Fderivs}) that \(F''(y)\asymp cn/m\). 
Recall also (\ref{y0exp}), which gives that for \(n\leq 1000\sqrt{mx}/k\), \(y_0\asymp \sqrt{mx}/n\gg k\). 
Note also that for \(y\) in the region of integration, \(y-k\geq y_0-2L-k\geq k^{1/3+\epsilon}\) by assumption.
Combining these observations with our assumptions \(x\geq k^2/(8\pi^2)\), \(m\leq \Delta^2k^\epsilon/x\) and \(\Delta\leq x^{2/3}k^{1/3-\epsilon}\), one can show that for \(y\in[y_0-2L, y_0+2L]\), 
\begin{equation}\label{kappaF''bound}
\frac{\kappa^2}{y^2}(y^2-\kappa^2)^{-1/2}\ll \frac{k^{11/6-\epsilon/2}}{y_0^{5/2}}
=o\Big( \frac{cn}{m}\Big)
\ll F''(y).
\end{equation}
So from (\ref{tildeF''}) and (\ref{kappaF''bound}), we conclude 
\begin{equation}\label{tildeF''2}
F_\pm''(y)\asymp F''(y)\asymp  \frac{cn}{m}.
\end{equation}

Differentiating repeatedly, we obtain that for \(y\in[y_0-2L, y_0+2L]\) and \(j>2\), 
\begin{equation*}
F_\pm^{(j)}(y)=F^{(j)}(y)+\frac{d^{j-2}}{dy^{j-2}}\Big(\frac{\kappa^2}{y^2}(y^2-\kappa^2)^{-1/2}\Big)\ll_j \frac{cn}{m} y^{2-j} + \frac{k^2}{y^{2}}(y+k)^{-1/2}(y-k)^{-1/2-(j-2)}.
\end{equation*}
Here we used the bound (\ref{Fderivs}) for \(F^{(j)}(y)\). Since \(\kappa^2 y^{-2} (y^2-\kappa^2)^{-1/2}=o(cn/m)\) (see (\ref{kappaF''bound})), for \(y\in[y_0-2L, y_0+2L]\) (satisfying \(y-k\asymp y_0-k\)) we have
\begin{equation} \label{Fderivsbigthan2} 
F^{(j)}(y)\ll_j \frac{cn}{m} (y^{2-j} + (y-k)^{2-j})\ll_j \frac{cn}{m}(y_0-k)^{2-j}, \text{ for } j=2,3,4\ldots
\end{equation}

We also deduce that \(F_\pm\) has a single stationary point in the interval \([y_0-2L,y_0+2L]\). Indeed, \(F_\pm''\) does not change sign on this interval, since \( F_\pm''\asymp cn/m\) for all \(y\in[y_0-2L, y_0+2L]\) by (\ref{tildeF''2}). Thus \(F_\pm'\) is monotonic on this interval. Moreover, the mean value theorem shows
\begin{equation}\label{mvtF'} 
F'(y)=(y-y_0)F''(y_0+\xi), \text{ for some } \xi=\xi(y) \in\begin{cases} [0, y-y_0] & \text{ if } y\geq y_0,\\ [y-y_0, 0] & \text{ if } y\leq y_0.\end{cases}
\end{equation}
In particular, by (\ref{tildeF'})
\begin{multline*}
F'_\pm(y_0+2L)=F'(y_{0}+ 2L) +\mathcal O(1) = 2LF''(y_0+\xi_+)+\mathcal O(1)\\
\text{and } F'_\pm(y_0-2L)= -2LF''(y_0-\xi_-)+\mathcal O(1), \text{ for some \(0\leq \xi_+, \xi_-\leq 2L\)}.
\end{multline*}
Now since \(\pm 2LF''(y_0+\xi_\pm)\asymp cnL/m \gg k^\epsilon\) (by (\ref{tildeF''2}) and the fact \(L=k^\epsilon\max(1, m/(cn))\)), we deduce that \(F_\pm'\) has a unique zero in this interval, call this \( y_{0\pm}\).

Finally, if \(y\in[y_0-2L, y_0+2L]\), from (\ref{tildeF''2}) and (\ref{mvtF'}) we obtain the derivative bound
\begin{equation*}
F'(y)\ll |y-y_0||F''(y+\xi(y))|\ll \frac{cnL}{m}
\implies F_\pm'(y)=F'(y)+\mathcal O(1) \ll \frac{cnL}{m} \ll \frac{cn}{m} (y_0-k).
\end{equation*}
In the chain of inequalities on the right of the above, we used (\ref{tildeF'}) and our assumption \(y_0-k\geq 10L+k^{1/3+\epsilon}\gg L\). Thus we can extend (\ref{Fderivsbigthan2}) to the case \(j=1\) also. In other words 
\begin{equation}\label{tildeFhighderiv}
F_\pm^{(j)}(y)\ll_j \frac{cn}{m}(y_0-k)^{2-j}, \text{ valid for } j=1,2,\ldots
\end{equation}

Equipped with the estimates (\ref{tildegderivs}), (\ref{tildeF''2}) and (\ref{tildeFhighderiv}) and the fact that \(F_\pm'\) has a unique zero \(y_{0\pm}\) in \([y_0-2L, y_0+2L]\), we now apply Lemma \ref{bkyexpansion} with \(\alpha=y_0-2L\), \(\beta=y_0+2L\), \(X=x^{-3/4} y_0^{-3/4} (y_0-k)^{-1/4}\), \(V=L\), \(Y=cn(y_0-k)^2/m\) and \(Q=y_0-k\). 
To check the conditions (\ref{bkyexpansioncondition}) of Lemma \ref{bkyexpansion}, it suffices to show \(Y\geq k^\epsilon\) and \(VY^{1/2}/Q\geq k^\epsilon\).
Since we assume \(y_0-k\geq 10L+k^{1/3+\epsilon}\geq L\), we have
\[Y=\frac{cn}{m}(y_0-k)^2\geq \frac{cn}{m}L^2, \text{ and } \frac{VY^{1/2}}{Q}=\Big(\frac{cn}{m}\Big)^{1/2}L.\]
But \((cn/m)^{1/2}L=k^\epsilon \max\{(cn/m)^{1/2}, (cn/m)^{-1/2}\}\geq k^\epsilon\), so the conditions required in (\ref{bkyexpansioncondition}) are clearly satisfied.

We thus conclude the bound
\begin{equation*}
\int_{y_0-2L}^{y_0+2L} G_{i\pm} (y) e^{i F_\pm(y)}dy \ll x^{-3/4}y_0^{-3/4}(y_0-k)^{-1/4} c^{-1/2}m^{1/2}n^{-1/2}+k^{-1000}.
\end{equation*}
As previously, we have \(y_0\asymp \sqrt{mx}/n\) (see (\ref{y0exp})). Moreover, since we assume \(y_0-k\geq k^{1/3+\epsilon}\), it follows from (\ref{y0exp}) that \(y_0-k\asymp 4\pi\sqrt{mxt}/n-k\). Replacing these estimates in the above yields
\begin{equation*}
\int_{y_0-2L}^{y_0+2L} G_{i\pm} (y) e^{i F_\pm(y)}dy
\ll
x^{-9/8}k^{-1/4}c^{-1/2}m^{1/8}n^{1/2}\Big(\frac{4\pi\sqrt{mxt}}{k}-n\Big)^{-1/4}.
\end{equation*}
Part \ref{iiniii} now follows from (\ref{Ii'nexp4}).
\end{proof}

Finally, we apply the previous lemma to deduce the following bound for the sums \(S_i\).

\begin{lemma}\label{siboundfinal}
Assume \(\Delta\leq x^{2/3}k^{1/3-\epsilon}\). Then for \(i=1,2\) one has
\begin{multline*}
S_i\ll x^{-1/4}k^{-13/6-\epsilon}c^{1/2}m^{1/4}+x^{-1/2}k^{-3/2}m^{1/2}+x^{-3/4}k^{-3/2+2\epsilon}c^{-1/2}m^{3/4}\\
+x^{-3/4}k^{-1/2-2\epsilon}c^{1/2}m^{-1/4}+x^{-1}k^{-1/3}+x^{-5/4}k^{1/6+\epsilon}c^{-1/2}m^{1/4}+x^{-5/4}k^{-5/6+2\epsilon}c^{-3/2}m^{5/4}.
\end{multline*}
\end{lemma}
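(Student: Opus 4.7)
The plan is to start from Lemma~\ref{s12'lem}, which bounds $S_i$ by $c^{1/2}m^{-1/4}$ times a sum $\sum_n |\mathcal I_i'(n)|$ over the range $m^{1/2}x^{3/2}/(10\Delta^2 k^\epsilon) \leq n \leq 1000\sqrt{mx}/k$, and to split this sum into three ranges corresponding to the three cases of Lemma~\ref{blomerexplem}. It is convenient to introduce the parameter $N_0 = N_0(t) \coloneqq 4\pi\sqrt{mxt}/k \asymp \sqrt{mx}/k$: by (\ref{y0exp}) one has $y_0(n) \sim N_0 k/n$, so case (iii) (where $y_0 > k$) corresponds to $n$ noticeably below $N_0$, case (ii) to $n$ in a narrow window around $N_0$, and case (i) (where $y_0 < k$) to $n$ noticeably above $N_0$. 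Case (i) is immediate: the bound $|\mathcal I_i'(n)| \ll e^{-k^\epsilon}$ from Lemma~\ref{blomerexplem}(i), combined with the trivial estimate on the number of terms, makes this contribution negligible.

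For case (ii), the window $y_0(n) \in [k - k^{1/3+\epsilon} - 2L,\, k + k^{1/3+\epsilon} + 10L]$ has width $O(L + k^{1/3+\epsilon})$, which translates via $y_0 \sim N_0 k/n$ into a window in $n$ of width $\ll N_0(L + k^{1/3+\epsilon})/k$. Since $n \asymp N_0$ in this window and $L = k^\epsilon\max(1, m/(cn))$, the number $M_{ii}$ of such $n$ is $\ll 1 + N_0 k^{-2/3+\epsilon} + N_0 L/k$. Multiplying $M_{ii}$ by each of the two summands of the case (ii) bound from Lemma~\ref{blomerexplem}(ii), then by the prefactor $c^{1/2}m^{-1/4}$, and substituting $N_0 \asymp \sqrt{mx}/k$, should recover the terms $x^{-3/4}k^{-5/6+\epsilon}c^{1/2}m^{-1/4}$ and $x^{-5/4}k^{1/6+\epsilon}c^{-1/2}m^{1/4}$ from the $M_{ii}=1$ piece, together with the third and sixth target terms coming from the larger pieces of $M_{ii}$.

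For case (iii) the key step is to reduce the summation to the positive-integer sums
\[ \sum_{n < N_0-g} n^{1/2}(N_0-n)^{-1/4} \ll N_0^{5/4} \qquad\text{and}\qquad \sum_{n < N_0-g} n^2 (N_0-n)^{-9/4} \ll N_0^2 g^{-5/4}, \]
where $g \gg N_0 k^{-2/3+\epsilon}$ is the minimum gap $N_0 - n$ enforced by the case (iii) condition $y_0(n) - 10L \geq k + k^{1/3+\epsilon}$ (again via $y_0 \sim N_0 k/n$). The first sum is bounded by pulling out $n \leq N_0$ and using $\sum_{j \leq N_0} j^{-1/4} \ll N_0^{3/4}$; the second uses the convergence of $\sum_{j \geq g} j^{-9/4}$. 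Substituting $N_0 \asymp \sqrt{mx}/k$ and the lower bound for $g$ into the case (iii) bound from Lemma~\ref{blomerexplem}(iii) and multiplying by $c^{1/2}m^{-1/4}$, I expect the first sum to produce the term $x^{-1/2}k^{-3/2}m^{1/2}$ and the second, after exploiting the $g^{-5/4}$ saving, to produce the main term $x^{-1/4}k^{-13/6-\epsilon}c^{1/2}m^{1/4}$.

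The main obstacle is the careful bookkeeping. Since $L(n) = k^\epsilon\max(1, m/(cn))$ depends on $n$, counting $M_{ii}$ requires either freezing $L$ at $n \asymp N_0$ or further splitting the $n$-range according to whether $m/(cn) \geq 1$; one must similarly handle the gap $g$ in case (iii). One then needs to verify that every intermediate expression produced along the way is dominated by one of the six terms in the conclusion, relying systematically on the standing hypotheses $x \geq k^2/(8\pi^2)$, $c \leq 100\Delta^2/(xk^{1-\epsilon})$, $m \leq \Delta^2 k^\epsilon/x$, and $\Delta \leq x^{2/3}k^{1/3-\epsilon}$.
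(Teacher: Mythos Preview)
Your proposal is correct and follows essentially the same approach as the paper: start from Lemma~\ref{s12'lem}, split the $n$-sum according to the three cases of Lemma~\ref{blomerexplem}, discard case~(i) as negligible, count the integers in the transition window for case~(ii) as $\ll 1 + x^{1/2}m^{1/2}k^{-5/3+\epsilon} + mk^{-1+\epsilon}/c$, and in case~(iii) bound the two sums by $N_0^{5/4}$ and $N_0^2 g^{-5/4}$ respectively with $g \asymp N_0 k^{-2/3+\epsilon}$. The paper carries out exactly these computations (with an integral comparison for the case~(iii) sums) and then checks that the six resulting products match the six terms in the statement.
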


\begin{proof}
Recall from Lemma \ref{s12'lem} that 
\[S_i\ll c^{1/2}m^{-1/4}\sum_{\frac{m^{1/2}x^{3/2}}{\Delta^2k^\epsilon}\leq n\leq \frac{1000\sqrt{mx}}{k}} |\mathcal I_i'(n)|+k^{-1000}.\]
We now apply the bounds of Lemma \ref{blomerexplem}. To do so, we must split the sum over \(n\) into three parts based on the value of \(y_0(n)\) -- recall from (\ref{y0exp})
\begin{equation} \label{y0inequals} 
y_0=\frac{4\pi\sqrt{mxt}}{n}+\mathcal O(k^{1/3-\epsilon})\implies \frac{4\pi\sqrt{mxt}}{n}-k^{1/3}\leq y_0\leq \frac{4\pi\sqrt{mxt}}{n}+k^{1/3}.
\end{equation}

Firstly, we can see from part \ref{iini} of Lemma \ref{blomerexplem} that all terms with \(y_0+2L\leq k-k^{1/3+\epsilon}\) contribute \(\ll e^{- k^\epsilon/2}\), say, which is negligible. 

We next consider the contribution of \(\mathcal I_i'(n)\) where 
\begin{equation}\label{middlencond}
y_0(n)+2L\geq k-k^{1/3+\epsilon} \text{ and } y_0(n)-10L\leq k+k^{1/3+\epsilon}.
\end{equation} 
We first bound the number of \(n\) for which (\ref{middlencond}) holds. Since \(L\leq k^\epsilon(1+m/(cn))\), using (\ref{y0inequals}) we first observe 
\begin{multline*} 
y_0+2L\geq k-k^{1/3+\epsilon}\implies \frac{4\pi\sqrt{mxt}}{n}+k^{1/3}+2k^\epsilon\Big(1+\frac{m}{cn}\Big)\geq k-k^{1/3+\epsilon}\\
\implies n\leq \frac{4\pi\sqrt{mxt}+2k^\epsilon m/c}{k-2k^{1/3+\epsilon}}.
\end{multline*}
Similarly, 
\begin{equation*}
y_0-10L\leq k+k^{1/3+\epsilon}
\implies n\geq \frac{4\pi\sqrt{mxt}-10k^\epsilon m/c}{k+2k^{1/3+\epsilon}}.
\end{equation*}
Consequently, there are 
\[\leq \frac{4\pi\sqrt{mxt}+2k^\epsilon m/c}{k-2k^{1/3+\epsilon}}-\frac{4\pi\sqrt{mxt}-10k^\epsilon m/c}{k+2k^{1/3+\epsilon}}+1\ll x^{1/2}k^{-5/3+\epsilon}m^{1/2}+k^{-1+\epsilon}c^{-1}m+1\]
integers \(n\) satisfying (\ref{middlencond}). For these \(n\), we use the bound from part \ref{iinii} of Lemma \ref{blomerexplem}. This states 
\[\mathcal I_i'(n)\ll x^{-3/4}k^{-5/6+\epsilon}+x^{-5/4}k^{1/6+\epsilon}c^{-1}m^{1/2}.\]
We thus conclude that the overall contribution to \(S_i\) from \(n\) satisfying (\ref{middlencond}) is
\begin{multline}\label{middlencontribution} 
\ll c^{1/2}m^{-1/4}\big(x^{-3/4}k^{-5/6+\epsilon}+x^{-5/4}k^{1/6+\epsilon}c^{-1}m^{1/2}\big)\big(x^{1/2}k^{-5/3+\epsilon}m^{1/2}+k^{-1+\epsilon}c^{-1}m+1\big)\\
\ll x^{-1/4}k^{-5/2+2\epsilon}c^{1/2}m^{1/4}+x^{-3/4}k^{-11/6+2\epsilon}c^{-1/2}m^{3/4}+x^{-3/4}k^{-5/6+\epsilon}c^{1/2}m^{-1/4}\\
+x^{-3/4}k^{-3/2+2\epsilon}c^{-1/2}m^{3/4}+x^{-5/4}k^{-5/6+2\epsilon}c^{-3/2}m^{5/4}+x^{-5/4}k^{1/6+\epsilon}c^{-1/2}m^{1/4}.
\end{multline}
(Note the second term is dominated by the fourth term, so can be ignored.)

Finally, we bound the contribution of the remaining \(n\), for which \(y_0(n)-10L\geq k+k^{1/3+\epsilon}\). Note (using (\ref{y0inequals}))
\begin{equation*}
y_0-10L\geq k+k^{1/3+\epsilon}\implies \frac{4\pi\sqrt{mxt}}{n} +k^{1/3}\geq k+k^{1/3+\epsilon} \implies n\leq \frac{4\pi\sqrt{mxt}}{k+\frac12 k^{1/3+\epsilon}}.
\end{equation*}
For these \(n\), we use the bound given in part \ref{iiniii} of Lemma \ref{blomerexplem}. This states
\begin{equation*}
\mathcal I_i'(n)\ll x^{-9/8}k^{-1/4}c^{-1/2}m^{1/8}n^{1/2}\Big(\frac{4\pi\sqrt{mxt}}{k}-n\Big)^{-1/4}
+x^{-5/8}k^{-9/4}m^{1/8}n^{2}\Big(\frac{4\pi\sqrt{mxt}}{k}-n\Big)^{-9/4}.
\end{equation*}
It follows that the overall contribution to \(S_i\) from \(n\) satisfying \(y_0(n)-10L\geq k+k^{1/3+\epsilon}\) is 
\begin{multline}\label{largencontributions}
\ll c^{1/2}m^{-1/4}\Big(x^{-9/8}k^{-1/4}c^{-1/2}m^{1/8}\sum_{n\leq \frac{4\pi\sqrt{mxt}}{k+\frac12 k^{1/3+\epsilon}}}n^{1/2}\Big(\frac{4\pi\sqrt{mxt}}{k}-n\Big)^{-1/4}\\
+x^{-5/8}k^{-9/4}m^{1/8}\sum_{n\leq \frac{4\pi\sqrt{mxt}}{k+\frac12 k^{1/3+\epsilon}}} n^2\Big(\frac{4\pi\sqrt{mxt}}{k}-n\Big)^{-9/4}\Big).
\end{multline}
(We have dropped the restriction \(n\geq m^{1/2}x^{3/2}/(\Delta^2k^\epsilon)\) from these sums at no cost.) It is simple to bound\footnote{For an increasing function \(f\), one has \(\sum_{n\leq X}f(n)\leq \int_1^X f(u)du+f(X)\).} the remaining sums over \(n\). Firstly
\begin{multline*}
\sum_{n\leq \frac{4\pi\sqrt{mxt}}{k+\frac12 k^{1/3+\epsilon}}}n^{2}\Big(\frac{4\pi\sqrt{mxt}}{k}-n\Big)^{-9/4}\\
\leq \int_1^{\frac{4\pi\sqrt{mxt}}{k+\frac12 k^{1/3+\epsilon}}} u^{2}\Big(\frac{4\pi\sqrt{mxt}}{k}-u\Big)^{-9/4}du+\Big(\frac{4\pi\sqrt{mxt}}{k+\frac12 k^{1/3+\epsilon}}\Big)^{2}\Big(\frac{4\pi \sqrt{mxt}}{k}-\frac{4\pi\sqrt{mxt}}{k+\frac12 k^{1/3+\epsilon}}\Big)^{-9/4}\\
\ll x^{3/8}k^{1/12-5\epsilon/4}m^{3/8}+x^{-1/8}k^{7/4-9\epsilon/4}m^{-1/8}.
\end{multline*}
Similarly
\begin{equation*}
\sum_{n\leq \frac{4\pi\sqrt{mxt}}{k+\frac12 k^{1/3+\epsilon}}}n^{1/2}\Big(\frac{4\pi\sqrt{mxt}}{k}-n\Big)^{-1/4}
\ll 
x^{5/8}k^{-5/4}m^{5/8}+x^{1/8}k^{-1/12-\epsilon/4}m^{1/8}.
\end{equation*}

Consequently, the contribution of all \(n\) with \(y_0(n)-10L\geq k+k^{1/3+\epsilon}\), given in (\ref{largencontributions}) is 
\begin{equation}\label{largencontrib2} 
\ll x^{-1/2}k^{-3/2}m^{1/2}+x^{-1}k^{-1/3}+x^{-1/4}k^{-13/6-\epsilon}c^{1/2}m^{1/4}+x^{-3/4}k^{-1/2-2\epsilon}c^{1/2}m^{-1/4}.
\end{equation}
Combining the bounds (\ref{middlencontribution}) and (\ref{largencontrib2}) for the two (non-negligible) contributions to the sums \(S_i\) (noting also that the first and third terms of (\ref{middlencontribution}) are dominated by the third and fourth terms of (\ref{largencontrib2}) respectively, so can be ignored), we obtain the lemma.
\end{proof}

Equipped with this bound for the sums \(S_i\), the main result of this section (Lemma \ref{offdiaglem}) follows straightforwardly from Lemma \ref{initalodbound}.

\begin{proof}[Proof of Lemma \ref{offdiaglem}]
Recall Lemma \ref{initalodbound}, which states 
\begin{equation*}
(\mathrm{OD})\ll x^{5/4}\sum_{c\leq 100\Delta^2/(x k^{1-\epsilon})} c^{-1}\sum_{m\leq \Delta^2k^\epsilon/x} m^{-3/4} \big(\max_{t\in[1,2]}|S_1|+\max_{t\in[1,2]}|S_2|\big)+x^{-5/4}k^{-4/3+2\epsilon}\Delta^{5/2}.
\end{equation*}
Lemma \ref{siboundfinal} (which holds uniformly for \(t\in[1,2]\)) now yields 
\begin{multline*}
(\mathrm{OD})\ll  x k^{-13/6-\epsilon}\sum_c c^{-1/2}\sum_m m^{-1/2}+ x^{3/4}k^{-3/2}\sum_c c^{-1}\sum_m m^{-1/4}\\
x^{1/2} k^{-3/2+2\epsilon} \sum_c c^{-3/2}\sum_m 1+ x^{1/2} k^{-1/2-2\epsilon} \sum_c c^{-1/2} \sum_m m^{-1}+x^{1/4}k^{-1/3}\sum_c c^{-1} \sum_m m^{-3/4}\\
k^{1/6+\epsilon} \sum_c c^{-3/2} \sum_m m^{-1/2}
+ k^{-5/6+ 2\epsilon} \sum_c c^{-5/2} \sum_m m^{1/2}
+x^{-5/4}k^{-4/3+2\epsilon}\Delta^{5/2},
\end{multline*}
where \(\sum_c\) denotes that the sum is taken over \(c\leq 100\Delta^2/(x k^{1-\epsilon})\), and \(\sum_m\) denotes that the sum is taken over \(m\leq \Delta^2 k^\epsilon/x\). One easily bounds these sums, and obtains
\begin{multline*}
(\mathrm{OD}) \ll k^{-8/3}\Delta^2+k^{-3/2+\epsilon}\Delta^{3/2}+x^{-1/2}k^{-3/2+3\epsilon}\Delta^2\\
+k^{-1-\epsilon}\Delta +k^{-1/3+\epsilon}\Delta^{1/2}+ x^{-1/2} k^{1/6+2\epsilon} \Delta+ x^{-3/2}k^{-5/6+4\epsilon}\Delta^3+x^{-5/4}k^{-4/3+2\epsilon}\Delta^{5/2}.
\end{multline*}
Our assumptions \(x\geq k^2/(8\pi^2)\) and \(x^{1/2}\leq \Delta\leq x^{2/3}k^{1/3-\epsilon}\) imply that the third and fourth error term above are dominated by the second, and that the final error term is dominated by the penultimate one.
We thus obtain the required bound
\[(\mathrm{OD})\ll k^{-8/3}\Delta^2+k^{-3/2+\epsilon}\Delta^{3/2}+k^{-1/3+\epsilon}\Delta^{1/2}+x^{-1/2} k^{1/6+2\epsilon} \Delta+x^{-3/2}k^{-5/6+4\epsilon}\Delta^3.\]
\end{proof}

\bibliographystyle{plain}
\bibliography{References}

\end{document}